\documentclass[12pt]{extarticle}
\usepackage{amsmath, amsthm, amssymb, color}
\usepackage[colorlinks=true,linkcolor=blue,urlcolor=blue]{hyperref}
\usepackage{graphicx}
\usepackage{caption}
\usepackage{mathtools}
\usepackage{enumerate}
\usepackage{verbatim}
\usepackage{tikz,tikz-cd,tikz-3dplot}
\usepackage{amssymb}
\usetikzlibrary{matrix}
\usetikzlibrary{arrows}
\usetikzlibrary{positioning}
\usepackage{algorithm}
\usepackage[noend]{algpseudocode}
\usepackage{caption}
\usepackage[normalem]{ulem}
\usepackage{subcaption}
\oddsidemargin \evensidemargin
\usepackage{makecell}
\usepackage{array}
\usepackage{pgfplots}
\usepgfplotslibrary{groupplots}
\pgfplotsset{compat=1.15}
\usepackage{mathrsfs}
\usetikzlibrary{arrows}

\usepackage{macros}

\newtheorem{theorem}{Theorem}
\newtheorem{proposition}[theorem]{Proposition}
\newtheorem{lemma}[theorem]{Lemma}
\newtheorem{corollary}[theorem]{Corollary}

\theoremstyle{definition}
\newtheorem{definition}[theorem]{Definition}

\newtheorem{remark}[theorem]{Remark}
\newtheorem{conjecture}[theorem]{Conjecture}

\newtheorem{example}[theorem]{Example}

\numberwithin{theorem}{section}

\newcommand{\PP}{\mathbb{P}}
\newcommand{\RR}{\mathbb{R}}

\newcommand{\CC}{\mathbb{C}}
\newcommand{\ZZ}{\mathbb{Z}}
\newcommand{\NN}{\mathbb{N}}

\newcommand{\Disc}[1]{\ensuremath{\mathrm{Disc} \left(#1\right)}}

\DeclareMathOperator{\mldeg}{mldeg}
\DeclareMathOperator{\conv}{conv}

\title{The Euler Stratification for $\PP^1 \times \PP^1 \times \PP^n$}
\author{Serkan Ho\c{s}ten, Vadym Kurylenko, Elke Neuhaus and Nikolas Rieke}
\date{}

\begin{document}
\maketitle

\begin{abstract}
We study the Euler characteristic of a hypersurface in $(\CC^*)^2 \times (\CC^*)^n$ defined by a polynomial whose monomial support corresponds to lattice points in $\Delta_1 \times \Delta_1 \times \Delta_n$ as the coefficients of the defining polynomial vary. Each member of this hypersurface family corresponds to a three-way independence model from algebraic statistics, and the (signed) Euler characteristic is equal to the maximum likelihood degree (ML degree) of the model. We show in the case of $\Delta_1 \times \Delta_1 \times \Delta_1$ this Euler characteristic depends only on the vanishing patterns of the factors of the principal $A$-determinant, but this fails for $\Delta_1 \times \Delta_1 \times \Delta_n$ with $n \geq 2$. We prove that, for all $n\geq 1$,  all positive integers up to the maximum possible ML degree can be realized as the Euler characteristic. Furthermore, we completely determine the Euler stratification for $\PP^1 \times \PP^1 \times \PP^1$ and provide partial information for $\PP^1 \times \PP^1 \times \PP^2$. 
\end{abstract}

%\noindent Keywords: Maximum likelihood degree, principal $A$-determinant, Euler characteristic \\
%MSC2020: 14F45, 13P15, 05E14, 62R01

\section{Introduction}
In this paper, we consider the hypersurface $Y_{W,n} \subset (\CC^*)^{2+n}$ defined by 
\begin{equation} \label{eq:f}
f_W =  f_0  + \sum_{k=1}^n z_kf_k =  (w_{000} + w_{100}x_1 + w_{010}y_1 + w_{110} x_1y_1)  + \sum_{k=1}^n z_k (w_{00k} + w_{10k}x_1 + w_{01k}y_1 + w_{11k} x_1y_1)
\end{equation}
where $W = (w_{ijk}) \in (\CC^2 \otimes \CC^2 \otimes \CC^{n+1})^* = (\CC^*)^{4n+4}$.
Our goal is to understand the variation of the Euler characteristic $\chi(Y_{W,n})$ with respect to $W$. The hypersurface $Y_{W,n}$ goes with the \emph{scaled toric variety} $X_{W,n} \subset \PP^{4n+3}$ defined as the scaled Segre embedding of $\PP^1 \times \PP^1 \times \PP^n$ 
\begin{equation}
\label{eq:segre-map}
[x_0 \, : x_1] \times [y_0 \, : y_1] \times [z_0 \, : \, \cdots \, : z_n] \longrightarrow [w_{ijk} x_iy_jz_k \, : \, i,j=0,1, \, k=0,1,\ldots, n]
\end{equation}
into $\PP^{4n+3}$; see \cite[Section 2.1]{amendola2019maximum}. The usual Segre embedding of $\PP^1 \times \PP^1 \times \PP^n$ is obtained with $w_{ijk} = 1$ for all $i,j,k$, and we denote this toric variety by $\mathcal X_n$. The exponent vectors of the monomials in \eqref{eq:segre-map} are the lattice points of $P = \Delta_1 \times \Delta_1 \times \Delta_n$ where $\Delta_r$ is the convex hull of the standard unit vectors in $\RR^{r+1}$. We collect these  lattice points in the $(n+5) \times (4n+4)$ matrix $A$.

The \emph{maximum likelihood estimation problem} is an optimization problem over the positive real part of $X_{W,n}$:

\begin{equation} \label{eq:ml-problem}
\mathrm{maximize} \,\, \frac{\prod_{i,j,k} p_{ijk}^{u_{ijk}}}{\sum_{i,j,k} p_{ijk}} \,\,\,\, \text{subject to} \,\,\, (p_{ijk}) \in X_{W,n} \cap \RR_+^{4n+4}.
\end{equation}
Here $(u_{ijk}) \in \NN^{4n+4}$ is the data, and the variety $X_{W,n}$ is a \emph{three-way independence model} corresponding to two binary and one $(n+1)$-ary random variables that are probabilistically independent \cite[Chapter 9]{Sul18}.
The \emph{maximum likelihood degree} of $X_{W,n}$, $\mldeg(X_{W,n})$,  is defined to be the number of complex critical points of the rational objective function in \eqref{eq:ml-problem} over $X_{W,n}$ for generic data; see \cite{CHKS06,HKS05, HS14}. 

Our starting point is based on the following two results. The first one is a corollary of \cite[Theorem 1]{H13} or \cite[Theorem 1.7]{HS14}. For this let 
$$
\mathcal H = \left\{(p_{ijk}) \in \PP^{4n+3} \colon \left(\prod_{i,j,k} p_{ijk}\right)\left(\sum_{i,j,k} p_{ijk}\right) = 0\right\}.
$$
\begin{proposition}\label{prop:ML degree = Euler char}
The ML degree of $X_{W,n}$ is equal to the signed Euler characteristic of $Y_{W,n}$:
$$\mldeg(X_{W,n}) \, = \, (-1)^{n+2} \chi(X_{W,n} \backslash \mathcal H) =  (-1)^{n+1} \chi(Y_{W,n}).$$
\end{proposition}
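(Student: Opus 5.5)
The plan has two parts. First invoke Huh's theorem \cite[Theorem 1]{H13} (equivalently \cite[Theorem 1.7]{HS14}) for the first equality. Then identify $X_{W,n}\setminus\mathcal H$ with $Y_{W,n}$'s complement in a torus and use additivity of the Euler characteristic for the second.

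\textbf{Step 1: first equality.} Huh's theorem says that if $V\subset(\CC^*)^N$ is a smooth closed subvariety of dimension $d$, then the number of critical points of a generic master function $\prod p_i^{u_i}$ on $V$ equals $(-1)^d\chi(V)$. Projectively, with the sum $\sum p_{ijk}$ as the extra linear form, this is the statement of \cite[Theorem 1.7]{HS14}: the ML degree of a projective variety $X$ equals $(-1)^{\dim X}\chi(X\setminus\mathcal H)$, provided $X\setminus\mathcal H$ is smooth.

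Here $\dim X_{W,n}=n+2$. The open set $X_{W,n}\setminus\mathcal H$ sits inside the dense torus orbit of the toric variety $X_{W,n}$, because all coordinates $p_{ijk}$ are nonzero there. That orbit is isomorphic to the torus $(\CC^*)^{n+2}$, so $X_{W,n}\setminus\mathcal H$ is smooth. This gives $\mldeg(X_{W,n})=(-1)^{n+2}\chi(X_{W,n}\setminus\mathcal H)$.

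\textbf{Step 2: identify the open set.} Parametrize the dense orbit by the affine chart $x_0=y_0=z_0=1$. Then $(x_1,y_1,z_1,\dots,z_n)\in(\CC^*)^{n+2}$ maps isomorphically onto the orbit, since the Segre-type map from the big torus of $\PP^1\times\PP^1\times\PP^n$ is injective. In this chart, $\sum_{i,j,k}p_{ijk}$ pulls back, up to the nonvanishing scaling, to exactly $f_W$ from \eqref{eq:f}. Hence
$$X_{W,n}\setminus\mathcal H\;\cong\;(\CC^*)^{n+2}\setminus Y_{W,n}.$$

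\textbf{Step 3: second equality.} The Euler characteristic with compact supports is additive, and for complex algebraic varieties it agrees with the ordinary Euler characteristic. Since $\chi((\CC^*)^{n+2})=0$, we get
$$\chi\bigl((\CC^*)^{n+2}\setminus Y_{W,n}\bigr)=-\chi(Y_{W,n}).$$
Therefore $(-1)^{n+2}\chi(X_{W,n}\setminus\mathcal H)=(-1)^{n+1}\chi(Y_{W,n})$.

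The only step needing care is the first: checking the hypotheses of Huh's theorem, namely smoothness and the precise form of the hyperplane arrangement. Both are immediate because the relevant set lies in a torus orbit. Steps 2 and 3 are routine.
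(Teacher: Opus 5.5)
Your proposal is correct and follows the paper's own route. The paper gets the first equality as a corollary of Huh's theorem (or Theorem 1.7 of Huh--Sturmfels), and the second from the isomorphism $X_{W,n}\setminus\mathcal H\cong(\CC^*)^{n+2}\setminus Y_{W,n}$ induced by the Segre-type map together with $\chi(Y_{W,n})=-\chi\bigl((\CC^*)^{n+2}\setminus Y_{W,n}\bigr)$. Your verification that $X_{W,n}\setminus\mathcal H$ lies in the dense torus orbit (hence is smooth) and that the chart $x_0=y_0=z_0=1$ is injective fills in details the paper delegates to its citations.
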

We note that the second equality follows from the fact that $X_{W,n} \backslash \mathcal H$ and $(\CC^*)^{n+2} \backslash Y_{W,n}$ are isomorphic essentially via the map in \eqref{eq:segre-map} and $\chi(Y_{W,n}) = -\chi((\CC^*)^{n+2} \backslash Y_{W,n})$; see \cite[Section 6.4]{TW24} for further details. The second result is Theorem 13 of \cite{amendola2019maximum} applied to our case.
\begin{proposition} \label{prop:deficient-mldeg}
    The ML degree of $X_{W,n}$ is at most the degree of the toric variety $\mathcal X_n$. Moreover, $\mldeg(X_{W,n}) < \deg(\mathcal X_n)$ if and only if $E_A(W) = 0$ where $E_A$ is the principal $A$-determinant.
\end{proposition}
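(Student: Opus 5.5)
This statement is Theorem 13 of \cite{amendola2019maximum} specialized to $A$ = the lattice points of $\Delta_1\times\Delta_1\times\Delta_n$, so the plan is to recall the argument behind that theorem and check that its hypotheses hold here.

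\textbf{Step 1: reduce to Euler characteristics.} By Proposition~\ref{prop:ML degree = Euler char}, $\mldeg(X_{W,n}) = (-1)^{n+1}\chi(Y_{W,n})$. The claim therefore becomes a statement about the signed Euler characteristic of the very affine hypersurface $Y_{W,n}\subset(\CC^*)^{n+2}$ cut out by $f_W$.

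\textbf{Step 2: the generic value.} For generic $W$ the hypersurface $Y_{W,n}$ is nondegenerate with respect to its Newton polytope $P$. Here we use Kouchnirenko--Khovanskii: the complement $(\CC^*)^{n+2}\setminus Y_{W,n}$ has Euler characteristic $(-1)^{n+2}\,\mathrm{vol}(P)$, with volume normalized so that a unimodular simplex has volume $1$. Since $P$ is the Newton polytope of the toric embedding, $\mathrm{vol}(P)=\deg(\mathcal X_n)$. This matches the known fact that the ML degree of an independence model with generic scaling equals the degree of the model; for this model $\deg(\mathcal X_n)=\frac{(n+2)!}{n!}=(n+1)(n+2)$.

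\textbf{Step 3: semicontinuity and the upper bound.} Consider the family of critical-point equations of the likelihood on $X_{W,n}$, parametrized by $(W,u)$. The number of isolated solutions in the open region avoiding $\mathcal H$ can only drop under specialization: solutions may escape to the boundary $\mathcal H$ or to infinity, but none can appear. This gives $\mathrm{mldeg}(X_{W,n})\le \deg(\mathcal X_n)$.

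\textbf{Step 4: the characterization of the drop.} One direction follows from Gelfand--Kapranov--Zelevinsky. $E_A(W)\neq 0$ means that $f_W$ restricted to every face $\Gamma$ of $P$ has no singular point in the torus, i.e.\ $Y_{W,n}$ is nondegenerate. Khovanskii's theorem then gives the full value $\deg(\mathcal X_n)$.

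The converse is the main obstacle. Suppose $E_A(W)=0$. Then for some face $\Gamma$, the truncation $f_W|_\Gamma$ has a singular point in the torus. I would try to show that some critical point of the likelihood degenerates onto the torus orbit $O_\Gamma$ of $\overline{X_{W,n}}$ as $W$ approaches this point. Concretely, I would use the description of the ML degree as the degree of a likelihood correspondence. The alternative is a Euler-characteristic computation via the compactification in $X_P$: stratify by orbits, and note that a singular point on a face stratum contributes a nonzero local correction, through vanishing cycles or a Milnor number, which strictly lowers $|\chi|$.

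Following \cite{amendola2019maximum}, I would take the route through the $A$-discriminant of each face. For the face $\Gamma$, take a generic $u$ and construct a curve of critical points converging to the singular point. This shows that the limit fiber loses at least one solution. If doing this directly becomes messy, I would simply invoke Theorem~13 of \cite{amendola2019maximum} after checking that its hypotheses hold: $A$ is a homogeneous configuration (it contains the row of ones after summing coordinates) and $X_{W,n}$ is the corresponding scaled toric variety.
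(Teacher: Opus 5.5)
Your proposal takes essentially the same route as the paper, which gives no independent argument and simply cites Theorem~13 of \cite{amendola2019maximum} for this $A$; that citation is also your fallback. Your preliminary steps are correct supporting context: Kouchnirenko--Khovanskii for the generic value $(n+1)(n+2)$ and for the direction $E_A(W)\neq 0 \Rightarrow$ full degree, and semicontinuity for the upper bound. The converse, $E_A(W)=0 \Rightarrow$ a strict drop, is only sketched in your write-up and rests on the cited theorem, exactly as in the paper.
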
 
The principal $A$-determinant above  \cite[Chapter 10, Theorem 1.2]{GKZ94} is a polynomial in the coefficients $w_{ijk}$ with a known factorization
\begin{align*}
        E_A = \prod_{\Gamma \text{ a face of } P} \Disc{\Gamma \cap \ZZ^n},
    \end{align*}
where $\Disc{\Gamma \cap \ZZ^n}$ is the $A'$-discriminant \cite[Chapter 9]{GKZ94}. Here, $A'$ is the matrix with columns corresponding to vertices in the face $\Gamma$. The factors of $E_A$ in the case of our toric variety $\mathcal X_n$ are known explicitly, see \cite[Section 2.4]{CHKO24}. We will give more details about the principal $A$-determinant and its discriminantal factors in Section \ref{sec:factors}.

The Euler characteristic $\chi(Y_{
W,n})$ and hence the ML degree $\mldeg(X_{W,n})$ is controlled by the \emph{Euler stratification} for $Y_{W,n}$ (or, by abuse of language, for $\PP^1 \times \PP^1 \times \PP^n$) \cite{TW24}: the coefficient space where $W$ lives can be stratified according to $\chi(Y_{W,n})$. A fundamental question about this stratification is summarized in the following.
\begin{conjecture} \cite[Conjecture 2.29]{CHKO24}  \label{conj:main-conjecture}
If exactly the same factors of the principal $A$-determinant $E_A$ vanish on $W$ and $W'$ then $\chi(Y_{W,n}) = \chi(Y_{W',n})$.
\end{conjecture}
This conjecture has a positive answer for the Segre product $\PP^m \times \PP^n$.
\begin{theorem} \cite[Theorem 1.3]{CHKO24} \label{thm:Pm x Pn} Let $Z_{W,m,n} \subset (\CC^*)^m \times (\CC^*)^n$ be the hypersurface defined by 
$$ g \, = \, g_0 +  \sum_{j=1}^n y_jg_j \ = \left( w_{00} + \sum_{i=1}^m w_{i0} x_i \right) + \sum_{j=1}^n y_j \left( w_{0j} +  \sum_{i=1}^m w_{ij} x_i\right)$$
where $W = (w_{ij}) \in (\CC^{m+1} \otimes \CC^{n+1})^* = (\CC^*)^{mn+m+n+1}$. Then $\chi(Z_{W,m,n}) = \chi(Z_{W',m,n})$ if exactly the same factors of the principal $A$-determinant $E_A$ vanish on $W$ and $W'$ where $A$ consists of the lattice points in $\Delta_m \times \Delta_n$. Equivalently, if the matroids defined by the matrices $[I_{m+1} \, W]$ and $[I_{m+1} \, W']$ are isomorphic, then $\chi(Z_{W,m,n}) = \chi(Z_{W',m,n})$.
\end{theorem}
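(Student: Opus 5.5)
The plan is to compute $\chi(Z_{W,m,n})$ by projecting to the $x$-coordinates. Because $g$ is linear in $y$, each fiber is a hyperplane section of a torus, and its Euler characteristic depends only on which of the linear forms $g_0,\dots,g_n$ vanish at the chosen $x$. Let $\pi\colon Z_{W,m,n}\to(\CC^*)^m$ be the projection. For fixed $x$, set $c_j=g_j(x)$, so the fiber is $F_x=\{y\in(\CC^*)^n : c_0+\sum_{j\ge1} c_jy_j=0\}$. I would stratify $(\CC^*)^m$ into the constructible sets on which the zero pattern of $(c_0,\dots,c_n)$ is fixed. Over each stratum the fibration is locally trivial in the constructible sense, so additivity and multiplicativity of the Euler characteristic give $\chi(Z_{W,m,n})=\sum_{\text{strata }S}\chi(S)\,\chi(F_S)$.

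The fiber Euler characteristics are computed case by case.
\begin{itemize}
\item If $c_j=0$ for some $j\ge1$, then $F_x\cong\CC^*\times(\cdot)$, which is either empty or carries a free $\CC^*$-factor, so $\chi(F_x)=0$.
\item If all $c_1,\dots,c_n\neq0$ but $c_0=0$, then $F_x$ is a homogeneous hyperplane and is invariant under the free diagonal $\CC^*$-scaling; hence $\chi(F_x)=0$. (For $n=1$ it is empty.)
\item If all $c_0,\dots,c_n\neq0$, then $F_x$ is isomorphic to the complement in $\CC^{n-1}$ of $n$ generic affine hyperplanes (the $n-1$ coordinate hyperplanes and one more). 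Hence $\chi(F_x)=(-1)^{n-1}$.
\end{itemize}
Therefore $\chi(Z_{W,m,n})=(-1)^{n-1}\chi(U_W)$, where
$$U_W=\{x\in(\CC^*)^m : g_0(x)\cdots g_n(x)\neq0\}.$$

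Next I identify $U_W$ with a hyperplane arrangement complement. Homogenize with $x_0$. Then $U_W$ is the complement in $\PP^m$ of the $m+1+n+1$ hyperplanes given by the columns of the $(m+1)\times(m+n+2)$ matrix $[I_{m+1}\,W]$: the coordinate hyperplanes $x_0,\dots,x_m$ and the forms $g_0,\dots,g_n$. By the classical result of Orlik–Solomon/Zaslavsky type, the Euler characteristic of a projective arrangement complement is determined by the characteristic polynomial of its matroid. Concretely, it equals the reduced characteristic polynomial $\bar\chi_M(t)=\chi_M(t)/(t-1)$ evaluated at $t=1$, up to sign convention. So $\chi(Z_{W,m,n})$ depends only on the matroid of $[I_{m+1}\,W]$, which is the second formulation of the theorem.

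Finally I relate the matroid to the principal $A$-determinant. Every maximal minor of $[I_{m+1}\,W]$ is, up to sign, a minor of $W$ (of any size, including $1\times1$ entries). The faces of $\Delta_m\times\Delta_n$ are products $\Delta_I\times\Delta_J$. I would use the factorization recalled from \cite{CHKO24} to identify their discriminants: they are the square minors $\det W_{I,J}$ with $|I|=|J|$, and they are trivial (equal to $1$) otherwise. Thus the set of vanishing factors of $E_A$ at $W$ is exactly the set of vanishing square minors of $W$, and this determines the bases of $[I\,W]$, hence the matroid.

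The main obstacle is this last identification of the discriminantal factors of the faces $\Delta_I\times\Delta_J$ with the square minors of $W$, in particular the claim that non-square faces contribute no factor. If it cannot simply be cited from \cite{CHKO24}, I would verify it through the Horn–Kapranov/defect criterion: $\Delta_I\times\Delta_J$ has a nontrivial discriminant exactly when $|I|=|J|$, and in that case the discriminant is the determinant, because the singular locus of the bilinear form $y^TW_{I,J}x$ is nonempty precisely when $W_{I,J}$ is degenerate. The remaining steps, the constructible fibration argument and the fiber case analysis, are routine.
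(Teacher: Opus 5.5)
Your proof is correct, and it takes a somewhat different route from the paper. The paper only cites \cite{CHKO24} for this statement, but Section~\ref{sec: three-way} rederives it as follows:
\begin{itemize}
\item It uses Theorem~\ref{thm:Fevola+Matsubara-Heo} to pass from $Z_{W,m,n}$ to $V(g_0\cdots g_n)\cap(\CC^*)^m$.
\item It performs two rounds of inclusion--exclusion, first over $I\subseteq[\bar n]$ and then over the coordinate strata $X_J$.
\item It evaluates each resulting term as the Euler characteristic of a projective linear space, $m-|J|+1-\mathrm{rank}(W_{IJ'})$.
\item This gives the explicit formula $\sum(-1)^{|I|+|J|}\mathrm{rank}(W_{IJ})$ of Corollary~\ref{cor: ML-product-of-two}. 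Ranks of submatrices of a matrix with nonzero entries are governed by which square minors vanish, that is, by which factors of $E_A$ vanish, so the claim follows.
\end{itemize}

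You differ from this in two places.
\begin{itemize}
\item You reprove the Cayley-trick reduction directly. Your fiber computation is exactly the special case of Theorem~\ref{thm:Fevola+Matsubara-Heo} in which the $g_k$ are linear. Your three fiber cases and the value $(-1)^{n-1}$ are right. On the open stratum the fibration is even globally trivial, via the rescaling $y_j\mapsto c_jy_j/c_0$.
\item You replace the inclusion--exclusion by Orlik--Solomon/Zaslavsky. This gives $\chi(U_W)=\bar\chi_M(1)$, which is $\pm\beta(M)$, in one step.
\end{itemize}

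What each route buys:
\begin{itemize}
\item Your route lands directly on the matroid formulation. It makes invariance under matroid \emph{isomorphism}, and not merely under equality of vanishing patterns, automatic. The cost is quoting arrangement theory.
\item The paper's route is elementary and yields an explicit rank formula. Its framework (Theorem~\ref{thm: ML_product}) also extends to products of more than two simplices. There the $V_I$ are no longer linear and no matroid is available, which is why the paper needs it for $\PP^1\times\PP^1\times\PP^n$.
\end{itemize}

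Both routes need the identification of the factors of $E_A$ for $\Delta_m\times\Delta_n$ with the square minors of $W$. Your sketch is fine: the discriminant of $\Delta_I\times\Delta_J$ is nontrivial only when $|I|=|J|$, and in that case it equals $\det W_{IJ}$.
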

\begin{remark}
Conjecture \ref{conj:main-conjecture} is false for other families of scaled toric varieties. For instance, let
$$ f = w_0 + w_1 x + w_2 x^2 + w_3 x^3$$
where $ W= (w_0, w_1, w_2, w_3) \in (\CC^*)^4$. The corresponding toric variety is the twisted cubic in $\PP^3$ and we let $Z_W$ be the hypersurface in $\CC^*$ defined by $f$. The principal $A$-determinant $E_A$  where 
$$ A = \begin{pmatrix} 3 & 2 & 1 & 0 \\ 0 & 1 & 2 & 3\end{pmatrix}$$
has a single relevant factor, namely, the classical discriminant $D_f$ of the cubic polynomial $f$. The Euler characteristic $\chi(Z_W) = 3$ if and only if $D_f(W) \neq 0$. However, $D_f(W) = D_f(W') = 0$ does not necessarily imply $\chi(Z_W)= \chi(Z_{W'})$: The discriminant vanishes if and only if $f_W(x)$ is singular in which case $\chi(Z_W) < 3$, but the Euler characteristic is equal to the number of distinct roots of $f_W(x)$, i.e. it can be equal to $1$ or $2$. 
\end{remark}
\subsection{Results}
Our first result settles Conjecture \ref{conj:main-conjecture} for $\PP^1 \times \PP^1 \times \PP^n$. In general, the statement that the vanishing of certain factors of the principal $A$-determinant determines $\chi(Y_{W,n})$ and with it the ML degree of $X_{W,n}$ is false. We provide a counterexample in
Example \ref{ex: counterexample}. This also settles the question for general independence models. For $\PP^1 \times \PP^1 \times \PP^n$, one can however say the following:
\begin{theorem} \label{thm:main}
Let $f_W$ be as in \eqref{eq:f} and let 
$Y_{W,n}$ be the corresponding hypersurface in $(\CC^*)^{n+2}$.
\begin{enumerate} 
\item $n=1$: If exactly the same factors of the principal $A$-determinant $E_A$ vanish on $W$ and $W'$, then $\chi(Y_{W,n}) = \chi(Y_{W',n})$. 
\item $n=2$: The same statement holds as long as the factor of $E_A$ corresponding to the $2\times 2 \times  3$ hyperdeterminant does not vanish or as long as one of the factors corresponding to a $2 \times 2 \times 2$ hyperdeterminant does vanish. 
\item $n > 2$: The same statement holds as long as the factors of $E_A$ corresponding to $2\times 2 \times  3$ hyperdeterminants do not vanish. 
\end{enumerate}
\end{theorem}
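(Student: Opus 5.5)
The plan is to compute $\chi(Y_{W,n})$ by fibering over the $(x_1,y_1)$-torus and then to check that every ingredient of the resulting formula is determined by which factors of $E_A$ vanish. Write $\ell(x,y)=(f_0,f_1,\dots,f_n)$ for the vector of bilinear forms. Since $f_W$ is affine-linear in $z=(z_1,\dots,z_n)$, the projection $\pi\colon Y_{W,n}\to(\CC^*)^2$ has fiber over $(x_1,y_1)$ equal to
\[
\{z\in(\CC^*)^n : f_0+\textstyle\sum_k z_kf_k=0\}.
\]
This is a generic affine hyperplane section of the torus, and its Euler characteristic depends only on which of the $f_k(x_1,y_1)$ vanish. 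If $f_0\neq 0$ and exactly $s$ of $f_1,\dots,f_n$ are nonzero, the fiber is the complement of a generic hyperplane in $(\CC^*)^s$ times $(\CC^*)^{n-s}$. Its Euler characteristic is $(-1)^{n-1}$ when $s=n$ and $0$ otherwise, and it is also $0$ when $f_0=0$ unless all $f_k$ vanish. By additivity of $\chi$ over a constructible stratification of the base,
\[
\chi(Y_{W,n}) = (-1)^{n-1}\,\chi\bigl((\CC^*)^2\setminus V(f_0f_1\cdots f_n)\bigr) + (\text{correction over } V(f_0,\dots,f_n)\cap(\CC^*)^2).
\]
So the problem reduces to the Euler characteristic of the complement of $n+1$ curves of bidegree $(1,1)$ in $(\CC^*)^2$, together with the finitely many common zeros of all $f_k$. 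Over a common zero the fiber is all of $(\CC^*)^n$, which has $\chi=0$, so the correction term in fact vanishes.

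The next step is to compute $\chi\bigl((\CC^*)^2\setminus\bigcup_k C_k\bigr)$ with $C_k=V(f_k)$ by inclusion--exclusion. This needs the following data:
\begin{itemize}
\item[(a)] whether each $C_k$ is smooth irreducible (its $2\times 2$ determinant, the $\Delta_1\times\Delta_1$ face discriminant, is nonzero) or a union of two lines;
\item[(b)] how $C_k$ meets the toric boundary, governed by the vanishing of coefficients restricted to edges;
\item[(c)] the intersection numbers $|C_j\cap C_k\cap(\CC^*)^2|$, which are at most $2$;
\item[(d)] the triple points $C_i\cap C_j\cap C_k$.
\end{itemize}
I will identify each of these with a face discriminant of $P$. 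Item (c) for a pair $j,k$ is governed by the $2\times2\times2$ hyperdeterminant of the face $\Delta_1\times\Delta_1\times\{j,k\}$ (tangency) and by the discriminants of lower faces $\Delta_1\times\Delta_0\times\Delta_1$ (intersection points escaping to the boundary). Item (d) is governed by the $2\times2\times3$ hyperdeterminant of a $\Delta_1\times\Delta_1\times\Delta_2$ face, which vanishes exactly when three of the conics share a point (or degenerate accordingly). Faces of the form $\Delta_1\times\Delta_0\times\Delta_m$ with $m\geq 2$ encode concurrency of the restrictions to a boundary line, and these are matroidal, as in Theorem~\ref{thm:Pm x Pn}.

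For $n=1$ there are only two curves and no triple points, so every term of the inclusion--exclusion is read off from the vanishing pattern of the factors of $E_A$. A case check over the finitely many patterns, including degenerate conics and common zeros, proves part 1. For $n\geq 2$, the only datum not determined by factor vanishing is the number of triple (and higher) points, and the multiplicities with which they occur. If no $2\times2\times3$ hyperdeterminant vanishes, then there are no triple points in the torus, and the formula again depends only on the pattern, which gives part 3 and the first half of part 2.

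The main obstacle is the second clause of part 2. When $n=2$, some $2\times2\times 2$ hyperdeterminant vanishes (two conics tangent or otherwise degenerate), and the $2\times2\times3$ hyperdeterminant also vanishes, I must show that the configuration of $C_0,C_1,C_2$ is forced enough that the number of common points is determined. I would first try to show this by direct classification. A tangency of $C_j,C_k$ leaves a single intersection point, and the vanishing of the full hyperdeterminant forces the third conic through that point. To make this rigorous I would use the linear pencil structure (all $f_k$ lie in the $4$-dimensional space of bilinear forms) and parametrize the cases by the rank and Segre-type of $W$ as a $2\times2\times3$ tensor, whose orbit classification is finite. This is also where degenerate subcases must be checked by hand.
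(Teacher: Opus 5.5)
Your reduction step is sound and self-contained. Fibering $Y_{W,n}$ over $(\CC^*)^2$, with fibers $\{z\in(\CC^*)^n : f_0+\sum_k z_kf_k=0\}$, correctly gives $\chi(Y_{W,n})=(-1)^n\chi(V(f_0\cdots f_n)\cap(\CC^*)^2)$; the paper simply imports this as Theorem~\ref{thm:Fevola+Matsubara-Heo}. Your remark that nonvanishing $2\times2\times3$ hyperdeterminants empty every intersection with $|I|\geq 3$ is exactly how the paper obtains part 3 and the first clause of part 2 (Lemma~\ref{lem: no solution}), granted that the pairwise terms depend only on the pattern.

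The genuine gap is the second clause of part 2, which you leave open. Your sketch handles only the subcase in which $H_{jk}=0$ produces a single tangency point, and that is the easy subcase. Write $q_j=q_k=0$ as $T_{jk}(y)\,x=0$, with $T_{jk}(y)$ a $2\times 2$ matrix of linear forms in $y$. The locus $V(q_j,q_k)\subset\PP^1\times\PP^1$ can then be:
\begin{itemize}
\item $\PP^1\times\mathrm{pt}$, if $T_{jk}(y)=(a_0y_0+a_1y_1)M$ with $M$ invertible;
\item $(\PP^1\times\mathrm{pt})\cup(\mathrm{pt}\times\PP^1)$, the same with $\mathrm{rank}\,M=1$;
\item a single $\PP^1$, if $T_{jk}(y)$ has rank one for all $y$, e.g.\ $q_j\propto q_k$ irreducible, or $q_j,q_k$ sharing only a factor $x_0+\mu x_1$.
\end{itemize}
In these subcases the third conic is not forced through a point, and $\chi(V_{012})$ depends on how the remaining pairs degenerate. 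For example, if $H_{01}H_{02}\neq0$ and $T_{12}(y)$ has rank one everywhere, then $\chi(V_{012})=2$ when $q_1\propto q_2$, but $\chi(V_{012})=1$ when $q_1,q_2$ share only $x_0+\mu x_1$. One must check that this dichotomy is visible in which $2$-minors vanish.

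The missing ingredient is the paper's classification of $T_{jk}(y)$ into five types determined by the $2$-minors of $W^{jk}$ and $H_{jk}$ (Lemmas~\ref{lem: types_of_Ty} and~\ref{lem: type characterization}). This is followed by the case analysis over triples of types in the proof of Proposition~\ref{prop: chi of qs}. That analysis also shows why the hypothesis is needed: when all three pairs are of type I, the answer is decided by $\mathrm{rank}\,W^{(3)}_I$, which is not a factor of $E_A$ (Example~\ref{ex: counterexample}).

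Your fallback to the finite orbit classification of $2\times2\times3$ tensors does not close this gap by itself. The group $\mathrm{GL}_2\times\mathrm{GL}_2\times\mathrm{GL}_3$ mixes $q_0,q_1,q_2$ and moves the coordinate lines. So neither the curves $C_k$, nor the torus, nor the factors $F_{i\bullet(k_1,k_2)}$, $F_{\bullet\bullet k}$, $H_{jk}$ are invariant. You would still need to show that each vanishing pattern meets only orbits with the same $\chi(V_{012})$, and to locate the triple points relative to the axes.

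A milder form of the same problem affects part 1, which you only assert via ``a case check''. Your item (c), that two of the curves meet in at most two points, is false on realizable strata. For nonsingular $Q_0,Q_1$, the vanishing $F_{\bullet 1\bullet}=F_{1\bullet\bullet}=H=0$ forces $Q_0=Q_1$, and two singular conics can share a line. The claim that points escaping to the boundary are governed by face minors also needs proof. The paper does this bookkeeping by compactifying to $\PP^1\times\PP^1$. It separates the boundary terms $\chi(V_I\cap X_{\mathcal J})$, which reduce to ranks of $|I|\times 2$ face matrices and hence to $2$-minors (Lemma~\ref{lem: easy Euler char}), from the terms $\chi(V_I)$, which the type classification handles.
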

We will prove this theorem by first computing $\mldeg(X_{W,n})$ as in the proof of Theorem \ref{thm:Pm x Pn} from \cite{CHKO24}. In particular, we will analyze the Euler characteristic of the possible intersections of the plane quadrics defined by $f_0, f_1, \ldots, f_n$ in \eqref{eq:f}.

Despite this theorem, it is difficult to compute the Euler stratification of $Y_{W,n}$ itself. 
Nevertheless we will determine the complete Euler stratification for $\PP^1 \times \PP^1 \times \PP^1$.  As we will state later, the principal $A$-determinant for $\PP^1 \times \PP^1 \times \PP^1$ consists of seven factors one of which is a polynomial of degree $4$ called the $2 \times 2 \times 2$ hyperdeterminant.  
\begin{theorem} \label{thm:P1 x P1 x P1}
 The Euler stratification for $\PP^1 \times \PP^1 \times \PP^1$ consists of $41$ strata:
 \begin{itemize}
 \item[0.] One stratum where no factor of $E_A$ vanishes with $\chi(Y_{W,1})=6$ on this stratum. 
 \item[1.] $7$ strata where exactly one factor of $E_A$ vanishes with $\chi(Y_{W,1})=5$.
 \item[2.] $\binom{7}{2}$ strata where exactly two factors of $E_A$ vanish
 with $\chi(Y_{W,1})=4$.
 \item[3.] $8$ strata where three certain factors not containing the $2 \times 2 \times 2$ hyperdeterminant vanish with $\chi(Y_{W,1})=3$.
 \item[4.] $3$ strata where four certain factors and the $2 \times 2 \times 2$ hyperdeterminant vanish with $\chi(Y_{W,1}) =2$.
 \item[5.] One stratum where all factors of $E_A$
 vanish with $\chi(Y_{W,1}) =1$.
 \end{itemize}
\end{theorem}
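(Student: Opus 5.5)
The plan is to reduce the computation of $\chi(Y_{W,1})$ to a count of intersection points of two plane conics (bilinear curves) in $(\CC^*)^2$, following the strategy behind Theorem \ref{thm:Pm x Pn}. Write $f_W = f_0 + z_1 f_1$ with $f_0,f_1$ bilinear in $(x_1,y_1)$. Project $Y_{W,1}$ to $(x_1,y_1)\in(\CC^*)^2$. Over a point with $f_1\neq 0$ and $f_0\neq 0$, the fiber is the single point $z_1=-f_0/f_1$. Over points with $f_1 = 0\neq f_0$, the fiber is empty. Over $f_0=f_1=0$, the fiber is $\CC^*$, with Euler characteristic $0$. Hence
\[
\chi(Y_{W,1}) = \chi\bigl((\CC^*)^2 \setminus (V(f_0)\cup V(f_1))\bigr) = -\chi(V(f_0)) - \chi(V(f_1)) + \chi(V(f_0)\cap V(f_1)),
\]
with all sets taken inside $(\CC^*)^2$. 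Then $\mldeg = \chi(Y_{W,1})$ by Proposition \ref{prop:ML degree = Euler char}, and the maximum $6=\deg \mathcal X_1$ is attained off $E_A=0$ by Proposition \ref{prop:deficient-mldeg}.

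The next step is to enumerate the seven factors of $E_A$ for $\Delta_1\times\Delta_1\times\Delta_1$ (from \cite{CHKO24}) and interpret each geometrically. There are six square faces, whose discriminants are the $2\times 2$ minors $\det$ of the slices $(w_{ij0}),(w_{ij1})$, $(w_{i0k}),(w_{i1k})$, $(w_{0jk}),(w_{1jk})$. There is also the Cayley hyperdeterminant. Edges and vertices contribute trivial factors. The interpretations are as follows:
\begin{itemize}
\item The slice determinant for $z$ vanishing means $f_k$ factors into two lines.
\item The other four square-face determinants mean that $V(f_0)\cap V(f_1)$ has a point at infinity in the $x$ or $y$ boundary, i.e.\ one intersection point escapes $(\CC^*)^2$, or a reducible component.
\item The hyperdeterminant means the curves $V(f_0),V(f_1)$ are tangent in the torus, equivalently $f_W$ is singular.
\end{itemize}
A smooth bilinear conic in $(\CC^*)^2$ is $\PP^1$ minus $4$ points, with $\chi=-2$. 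Generically the two curves meet in $2$ torus points, giving $\chi = 2+2+2 = 6$. I then compute each term under each vanishing pattern. A reducible $f_k$ consists of two lines meeting in a point, so $\chi(V(f_k))$ goes from $-2$ to $-1$, provided the lines are not coordinate-degenerate. Each escaping intersection point lowers the intersection count by one, and a tangency merges two points into one. Each factor therefore independently lowers $\chi$ by one, which gives strata 0, 1 and 2 with all $\binom72$ pairs realizable.

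The main obstacle is the compatibility analysis for three or more vanishing factors: I must determine which subsets of factors can vanish simultaneously on $(\CC^*)^8$ and then compute $\chi$ on each. I would exploit the $S_3\ltimes(\ZZ/2)^3$ symmetry of the cube, together with the torus action rescaling $x,y,z$, to normalize $W$. I would then solve the incidence conditions explicitly, e.g.\ using the fact that vanishing of slice determinants forces rank-one slices, i.e.\ factored forms of $f_0,f_1$. The relevant facts are:
\begin{itemize}
\item Three determinants involving all three directions can vanish only in $8$ configurations, corresponding to choices of one face per parallel pair compatible with the cube's vertices.
\item These force the hyperdeterminant not to vanish, and give $\chi=3$.
\item Adding a fourth determinant forces the hyperdeterminant to vanish. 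There are $3$ such configurations, according to which pair of opposite faces both vanish, and they give $\chi=2$.
\item Everything vanishing corresponds to $W$ of rank one, where $f_W$ is a product of linear factors and $\chi=1$.
\end{itemize}
Finally, I would show that no other vanishing pattern occurs, and that on each listed pattern the Euler characteristic is constant. Constancy should follow from the explicit conic-intersection count above, since each pattern fixes the numbers of torus intersection points and of components. Adding $1+7+21+8+3+1$ gives the $41$ strata.
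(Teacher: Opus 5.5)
Your reduction is the paper's. The fibration over $(x_1,y_1)$ correctly gives $\chi(Y_{W,1})=-\chi(Q_0)-\chi(Q_1)+\chi(Q_0\cap Q_1)$, which is Corollary~\ref{cor: quadrics} for $n=1$, and the paper then counts conic intersections just as you propose. The gap is in the part that carries the theorem: the list of realizable vanishing patterns. Where you are specific beyond three factors, you are wrong.

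A corner triple plus any fourth minor is a complete pair of opposite faces plus one face from each other pair. So it contains a square cup, e.g.\ $F_{0\bullet\bullet}=F_{1\bullet\bullet}=F_{\bullet 0\bullet}=0$. A square cup forces the cubic frame containing it and $H$; here the slices become proportional, $f_1=\lambda f_0$. The remaining corner minor then makes $W$ rank one, so all seven factors vanish and $\chi=1$, not $2$.

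The three $\chi=2$ strata are the cubic frames: \emph{two} pairs of opposite faces plus $H$, with the third pair nonzero. Geometrically, either $Q_0=Q_1$ is smooth, or both $Q_k$ are reducible and share exactly one line. These strata arise from square cups, or from mirrors or hooks together with $H$, not from corners.

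Your heuristic that each factor lowers $\chi$ by one fails exactly at these strata ($5$ factors give $\chi=2$; $7$ factors give $\chi=1$). So it cannot replace a case check, even for pairs. For instance, on the stratum $F_{1\bullet\bullet}=H=0$ the curves $Q_0,Q_1$ do not meet in $\CC^2$ at all: both of their intersection points in $\PP^1\times\PP^1$ coincide on $x=\infty$. This contradicts your reading of $H$ as tangency in the torus.

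What is missing are the implications among the factors. The paper obtains these by saturation (Proposition~\ref{prop:factors 2x2x2}, Figure~\ref{fig:222_factor_relations}) and uses them throughout Section~\ref{sec:P1 x P1 x P1}:
\begin{enumerate}
\item a square cup forces its cubic frame and $H$;
\item a hook plus $H$ forces the cubic frame containing the hook;
\item a mirror plus $H$ forces one of the two cubic frames containing it;
\item a cubic frame plus any further minor forces everything.
\end{enumerate}
These implications exclude all triples except the $8$ corners, namely the square cups and every triple containing $H$. They also exclude all patterns with $4$ or $6$ factors; in particular, the $12$ quadruples of minors that are not cubic frames collapse to the rank-one stratum.

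Your plan defers all of this to ``I would show that no other vanishing pattern occurs''. It also only asserts constancy of $\chi$ on each pattern, which is the content of Theorem~\ref{thm:main}(1).

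To make the conic count rigorous, the paper uses the elimination $w_{111}f_0-w_{110}f_1=F_{1\bullet\bullet}x+F_{\bullet 1\bullet}y+(w_{000}w_{111}-w_{110}w_{001})$. This reduces $Q_0\cap Q_1$ to quadratics $G_1(x)$, $G_2(y)$ with leading coefficients $F_{1\bullet\bullet}$, $F_{\bullet 1\bullet}$, constant terms $F_{0\bullet\bullet}$, $F_{\bullet 0\bullet}$, and discriminant $H$. The paper then splits by which of $Q_0,Q_1$ are singular. Your sketch needs some such bookkeeping, together with the implications above, to become a proof.
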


Finally, we will turn to the question of the realizability of the ML degrees. We know that $\mldeg(X_{W,n}) \leq \deg(\mathcal X_n)$ and with the choice of 
$w_{ijk} = 1$ for all $i,j=0,1$ and $k=0,1,\ldots,n$ we get $\mldeg(X_{W,n}) = 1$ (this is the ML degree of the usual toric variety $\mathcal X_n = \PP^1 \times \PP^1 \times \PP^n$). Hence, the question arises whether all integers between $1$ and $\deg(\mathcal X_n)$ can be realized as
$\mldeg(X_{W,n})$. This question has an answer to the affirmative for $\PP^m \times \PP^n$ for $m=1,2,3$ \cite[Theorem 1.4]{CHKO24}. We settle it also for $\PP^1 \times \PP^1 \times \PP^n$. 
\begin{theorem} \label{thm:realizability}
For each integer $1 \leq r \leq \deg(\mathcal X_n) = (n+1)(n+2)$ there exists $W \in (\CC^*)^{4n+4}$ such that $\mldeg(X_{W,n}) = r$. 
\end{theorem}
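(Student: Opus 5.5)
The plan is to compute $\mldeg(X_{W,n})$ as the Euler characteristic of the complement of $n+1$ curves in $(\CC^*)^2$, as in the proof of Theorem \ref{thm:Pm x Pn}, and then to build configurations of curves with prescribed Euler characteristic by induction on $n$.

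\textbf{Step 1: fibration over $(x_1,y_1)$.} Project $Y_{W,n}$ to the $(x_1,y_1)$-torus. Over a point $(x_1,y_1)$, the fiber is the affine hyperplane $\{z\in(\CC^*)^n : \sum_{k\ge1} z_k f_k = -f_0\}$.
\begin{itemize}
\item If all $f_0,\dots,f_n$ are nonzero at the point, the fiber is a generic hyperplane in $(\CC^*)^n$, with $\chi=(-1)^{n-1}$.
\item In every other case the fiber is empty or has a free $\CC^*$-factor, so $\chi=0$.
\end{itemize}
Additivity of $\chi$ over this constructible stratification then gives
\[
\mldeg(X_{W,n}) = \chi(U_W), \qquad U_W=(\CC^*)^2\setminus \textstyle\bigcup_{k=0}^n C_k, \qquad C_k=V(f_k)\cap(\CC^*)^2 ,
\]
using Proposition \ref{prop:ML degree = Euler char}. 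Since $\chi((\CC^*)^2)=0$, we have $\chi(U_W)=-\chi(\bigcup_k C_k)$.

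\textbf{Step 2: local counting rules.} For a smooth bilinear curve with all four coefficients nonzero, $C_k$ is $\PP^1$ minus $4$ points, so $\chi(C_k)=-2$. For a reducible curve $(a+bx_1)(c+dy_1)$, still with all coefficients nonzero, $C_k$ is two copies of $\CC^*$ meeting in one point, so $\chi(C_k)=-1$.

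Adding $C_n$ to the union of $C_0,\dots,C_{n-1}$ therefore raises $\chi(U)$ by
\[
-\chi(C_n)+j ,
\]
where $j$ is the number of distinct points of $C_n\cap\bigcup_{k<n}C_k$ in the torus. With everything generic, two smooth curves meet in $2$ points of the torus and all intersection points are distinct. This gives
\[
\chi(U)=2(n+1)+2\binom{n+1}{2}=(n+1)(n+2),
\]
which matches $\deg(\mathcal X_n)$.

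\textbf{Step 3: induction on $n$.} The base case $n=0$ ($1\le r\le 2$) uses a smooth or a reducible single curve. For general $n$ we split into two ranges of $r$.
\begin{itemize}
\item For $r\le n(n+1)$, take a realizing configuration for $n-1$ and let $f_n$ be a scalar multiple of some earlier $f_k$. This adds $0$ to $\chi(U)$.
\item For $n(n+1)<r\le(n+1)(n+2)$, take $C_0,\dots,C_{n-1}$ generic, giving $\chi=n(n+1)$. Then choose $C_n$ so that its contribution $s=r-n(n+1)$ can be any value in $\{1,\dots,2n+2\}$.
\end{itemize}
For the contributions in the second range:
\begin{itemize}
\item A smooth $C_n$ contributes $2+j$.
\item A reducible $C_n$ contributes $1+j$.
\end{itemize}
It therefore suffices to realize every $j\in\{0,\dots,2n\}$ for a smooth $C_n$, and $j=0$ for a reducible one.

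\textbf{Step 4: realizing each $j$ (expected main obstacle).} The number $j$ drops below $2n$ in the following ways:
\begin{itemize}
\item $C_n$ passes through an existing point $C_a\cap C_b$, so that two of its intersections merge into a point already counted;
\item $C_n$ is tangent to some $C_k$;
\item an intersection point of $C_n$ with some $C_k$ moves to the boundary of the torus.
\end{itemize}
Since $C_n$ varies in a $3$-dimensional projective family, it can be imposed to pass through up to three chosen points. A natural choice is to put $C_n$ in the pencil spanned by $f_a$ and $f_b$, so that it contains both points of $C_a\cap C_b$.

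The difficulty is to organize these degenerations so that every value of $j$ from $0$ to $2n$ is hit, and at the same time
\begin{itemize}
\item all coefficients $w_{ijk}$ stay nonzero,
\item no unintended coincidences occur, and
\item $C_n$ stays smooth or reducible as required.
\end{itemize}
To get more room, I would first try to relax the genericity of $C_0,\dots,C_{n-1}$, for instance by letting several of them share a common pencil. Then check by explicit small coordinates, e.g.\ points with $x_1,y_1\in\{1,2,3\}$, that the needed intersection counts occur. If the bookkeeping in Step 3 becomes awkward, I would fall back on writing down explicit families of $W$ directly.
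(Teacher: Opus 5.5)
Your Steps 1--3 are correct and follow the paper's own reduction. Step 1 re-derives Corollary \ref{cor: quadrics}, duplicating a curve is Lemma \ref{lem:inductive_euler_char}, and what remains is the top range $n(n+1)<r\le(n+1)(n+2)$.

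\textbf{The gap.} Step 4, which you leave open, is the whole content of the theorem, and Step 3 sets it up in a way that cannot be completed. You require $C_0,\dots,C_{n-1}$ to contribute exactly $n(n+1)=\deg(\mathcal X_{n-1})$. By Proposition \ref{prop:deficient-mldeg} (applied to $\PP^1\times\PP^1\times\PP^{n-1}$), this means no factor of $E_A$ on slices $0,\dots,n-1$ vanishes. Hence every $C_k$ with $k<n$ is smooth, and no two of them share a boundary point of the same kind: $F_{\bullet 0(a,b)}\neq 0$ and $F_{\bullet 1(a,b)}\neq 0$ say exactly that $C_a$ and $C_b$ have different points on $y_1=0$ and on $y_1=\infty$.

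Now take $r=n(n+1)+1$, i.e.\ $s=1$.
A smooth $C_n$ gives $s=2+j\ge 2$, and a copy of an earlier curve gives $s=0$. So you need a reducible $C_n=\{x_1=c\}\cup\{y_1=c'\}$ with $j=0$. But $\{x_1=c\}$ meets $C_k$ outside the torus only at $(c,0)$ or $(c,\infty)$, i.e.\ only if $c\in\{-w_{00k}/w_{10k},\,-w_{01k}/w_{11k}\}$. With at least three curves $C_k$ (i.e.\ $n\ge 3$), two of them would have to share one of these two points, which is a contradiction.

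So $r=n(n+1)+1$ is unreachable in your scheme for every $n\ge 3$, however you relax genericity subject to $\chi=n(n+1)$. For genuinely generic $C_0,C_1$ it already fails at $n=2$. The underlying reason is a dimension count. $C_n$ moves in a $3$-dimensional family, and each unit drop of its contribution is a codimension-one condition. So one new curve can absorb only a bounded number of degenerations, while you need up to $2n+1$.

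\textbf{The missing idea.} The degenerations must be spread along the whole chain of curves, so that each curve absorbs only boundedly many. The paper does this with the alternating hook minors $\mathrm{altH}(n)$. For each consecutive pair $(a,a+1)$ it imposes two face minors: $F_{\bullet 0(a,a+1)},F_{1\bullet(a,a+1)}$ for $a$ even, and $F_{\bullet 1(a,a+1)},F_{0\bullet(a,a+1)}$ for $a$ odd. Each such minor pushes one point of $Q_a\cap Q_{a+1}$ to the boundary. The alternation makes each $Q_k$ use disjoint boundary points toward its two neighbours, so the conditions are compatible. Together with $F_{\bullet\bullet n}$ this gives $2n+1$ conditions.

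Proposition \ref{prop: hooks_nonvanishing} (via Lemma \ref{lem: minors H}) shows that a generic solution for any subset $S$ makes no other factor vanish. In particular no $H_{ijk}$ vanishes, so there are no triple points. Corollary \ref{cor: point_formula} then yields $\mldeg(X_{W,n})=(n+1)(n+2)-|S|$ (Lemma \ref{lem: hook_formula}) for $0\le|S|\le 2n+1$.

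Alternatively, you could keep your one-curve-at-a-time bookkeeping by starting from a configuration realizing the top range for $n-1$ rather than a generic one. Then $C_n$ only needs to contribute $2n$, $2n+1$ or $2n+2$, i.e.\ at most two unit drops, which the dimension count allows. You would still have to check that these drops create no extra coincidences with the already degenerate curves.
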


Now we give an outline of our paper. Section \ref{sec:factors} reviews the relevant background on $A$-discriminants and the principal $A$-determinant $E_A$ for toric varieties, and then specializes to the Segre product
$\PP^1\times\PP^1\times\PP^n$.
In Proposition \ref{prop:factors 2x2x2}, we describe exhaustively all possible ways the factors of $E_A$ can vanish for $n=1$. We will also give a partial description for $n=2$. Our main goal in Section \ref{sec: three-way} is to prove Theorem \ref{thm:main}. This requires a detailed study of the Euler characteristic of the intersection of quadrics in $\PP^1 \times \PP^1$ associated
to $f_0, f_1, \ldots, f_n$ as in \eqref{eq:f}. Along the way we give a formula for the ML degree of the scaled Segre product of $\PP^m \times \PP^n$ (Corollary \ref{cor: ML-product-of-two}). At the end of the section we provide a counterexample to Conjecture \ref{conj:main-conjecture}.  Example \ref{ex: counterexample} will present
two scaling tensors $W$ and $W'$
that lead to two different ML degrees for $n\geq 2$ although exactly the same set of factors
of $E_A$ vanish on $W$ and $W'$. Section \ref{sec:P1 x P1 x P1} gives the complete Euler stratification for $\PP^1 \times \PP^1 \times \PP^1$ (Theorem \ref{thm:P1 x P1 x P1}). We achieve this by carefully analyzing the intersection patterns of two quadrics in $(\CC^*)^2$. Finally, in Section \ref{sec:realizability}, we prove Theorem \ref{thm:realizability} by exhibiting a procedure to construct scaling tensors that produce all possible values for the ML degree of scaled $\PP^1 \times \PP^1 \times \PP^n$.

\section{Factors of the Principal A-Determinant} 
\label{sec:factors}

The principal $A$-determinant $E_A$ plays a crucial role for the Euler stratification of hypersurfaces such as $Y_{W,n}$ defined by \eqref{eq:f}(equivalently, for the ML degree stratification of scaled toric varieties such as $X_{W,n})$. In this section, we will define $E_A$ and then study it in the case of 
$\PP^1 \times \PP^1 \times \PP^n$ for $n=1,2$. We start with the definition of the $A$-discriminant; see \cite[Chapter 9]{GKZ94}. 
\begin{definition}
     Let $ f_w(z) = \sum_{i = 1}^N w_iz^{a_i} $ be a polynomial in $\CC[z_1,\ldots, z_d]$ where $w=(w_1, \ldots, w_N) \in (\CC^*)^N$, and let $ A = \left[a_1 \, a_2 \, \cdots \, a_N \right] \in \ZZ^{d \times N} $.  Then 

    \begin{align*}
        \nabla_{A} = \overline{\{ w \in (\CC^*)^N : \exists  z \in (\CC^*)^d \text{      such that  } f_w(z) = \frac{\partial f_w}{\partial z_1} = \cdots = \frac{\partial f_w}{\partial z_d} = 0 \}} 
         \subset \CC^N
    \end{align*}
     parametrizes hypersurfaces $\{f_w = 0\}$ that have singular points in $(\CC^*)^d$. Typically,  
     $ \nabla_A $ is an irreducible hypersurface and its defining polynomial in $\ZZ[w_1, \ldots, w_N]$ is called the $A$-discriminant which we denote by $\Disc{A} $. If $ \nabla_A $ is not a hypersurface we set $ \Disc{A} = 1 $, in which case we say $ \Disc{A} $ is trivial.
\end{definition}

%maybe definition of principal $A$-determinant  again (it's already in the Introduction!) 

\begin{definition}
 Let $A \in \ZZ^{d \times N}$ as above and let $X_A \subset \PP^{N-1}$ be the corresponding toric variety. If $X_A$ is smooth, the principal $A$-determinant is
 \begin{equation*} \label{principleAdeterminant}
        E_A(w) = \prod_{\Gamma \text{ a face of } P} \Disc{\Gamma \cap \Z^d},
    \end{equation*}
where $ P = \conv(a_1,\ldots,a_N) $ and $ \Disc{\Gamma \cap \Z^d} $ is the $A'$-discriminant, where $A'$ is the matrix with columns corresponding to vertices in the face $\Gamma$ \cite[Chapter 10, Theorem 1.2]{GKZ94}. 
\end{definition}
\begin{remark}
In the definition above, every vertex of $P$ contributes a factor $w_i$ to the principal $A$-determinant. In this paper, we are interested in the vanishing of $E_A(w)$ in $(\CC^*)^N$, and since $w \in (\CC^*)^N$, we will ignore these factors corresponding to the vertices.    
\end{remark}
Next we describe the principal $A$-determinant induced by the hypersurface $Y_{W,n}$. Here, the matrix $A$ is of format $(n+5) \times (4n+4)$,
where the columns correspond to the 
vertices of $P~=~\Delta_1~\times~\Delta_1~\times~\Delta_n$ or, equivalently, to the exponents of the terms in  $f_W$ in \eqref{eq:f}.  

First we introduce some notation. Note that the coefficient vector  $W$ can be viewed as a $2 \times 2 \times (n+1)$ tensor with entries in $\CC^*$.  We denote by $W_{\bullet \bullet k}$ the $2 \times 2$ matrix
$$W_{\bullet \bullet k} := \begin{pmatrix}
    w_{00k} & w_{01k} \\
    w_{10k} & w_{11k}
\end{pmatrix}.
$$
Similarly, we define
$$W_{i \bullet (k_1,k_2)} := \begin{pmatrix}
    w_{i0k_1} & w_{i1k_1} \\
    w_{i0k_2} & w_{i1k_2}
\end{pmatrix}
\quad \mbox{and} \quad
W_{\bullet j (k_1,k_2)} := \begin{pmatrix}
    w_{0jk_1} & w_{1jk_1} \\
    w_{0jk_2} & w_{1jk_2}
\end{pmatrix}.$$
The first matrix is a \emph{slice} of $W$ and there are $n+1$ such slices. The next two are $2 \times 2$ submatrices of the four \emph{faces} 
$W_{0 \bullet \bullet}, W_{1 \bullet \bullet}, W_{\bullet 0 \bullet}$, and $W_{\bullet 1 \bullet}$ which are $2 \times (n+1)$ matrices. There are a total of $4 \binom{n+1}{2}$ such matrices. We will call the determinants of all these $2 \times 2$ matrices the \emph{$2$-minors} of $W$ and denote them by 
%$F_{\bullet \bullet k} := \det W_{\bullet \bullet k}$, $F_{i \bullet (k_1,k_2)} := \det W_{i \bullet (k_1,k_2)}$ and $F_{\bullet j (k_1,k_2)} := \det W_{\bullet j (k_1,k_2)}$. 
$$F_{\bullet \bullet k} := \det W_{\bullet \bullet k}, \quad F_{i \bullet (k_1,k_2)} := \det W_{i \bullet (k_1,k_2)}, \quad F_{\bullet j (k_1,k_2)} := \det W_{\bullet j (k_1,k_2)}.$$ 
Furthermore, we will use subtensors of formats $2 \times 2 \times 2$ and $2 \times 2 \times 3$. The first kind is indexed by $0 \leq k_1 < k_2 \leq n$ and we denote it by $W^{k_1 k_2}$. The second kind is indexed by $0 \leq k_1 < k_2 < k_3 \leq n$ and we denote it by $W^{k_1 k_2 k_3}$. 
There are $\binom{n+1}{2}$ subtensors of the first kind and 
$\binom{n+1}{3}$ subtensors of the 
second kind.  

\begin{definition}
The $2 \times 2 \times 2$ hyperdeterminant $H_{k_1 k_2}$ of the subtensor $W^{k_1 k_2}$
is  
$$H_{k_1 k_2} = [(w_{00k_1}w_{11k_2} - w_{00k_2}w_{11k_1}) - (w_{01k_1}w_{10k_2} - w_{01k_2}w_{10k_1})]^2 - 4 \det(W_{0 \bullet (k_1,k_2)}) \det(W_{1 \bullet (k_1,k_2)}).$$
The $2 \times 2 \times 3$ hyperdeterminant $H_{k_1 k_2 k_3}$ of the subtensor $W^{k_1 k_2 k_3}$ is equal to the resultant $\mathrm{Res}(q_{k_1}, q_{k_2}, q_{k_3})$  where 
$$q_{k} = w_{00k}x_0y_0 + w_{01k}x_0y_1 + w_{10k}x_1 y_0 + w_{11k} x_1y_1. $$
%\vadym{Must be careful here, usually resultant is for $k+1$ equations in $k$ variables. We have $3$ equation in $4$ variables (although after dehomogenization it becomes in two variables)}
The $2 \times 2 \times 2$ hyperdeterminants $H_{k_1k_2}$ are polynomials of degree four in the entries of $W^{k_1k_2}$ which have $12$ terms. The $2 \times 2 \times 3$ hyperdeterminants $H_{k_1 k_2 k_3}$ are polynomials of degree six in the entries of $W^{k_1 k_2 k_3}$ which have $66$ terms. See \cite[Chapter 14]{GKZ94} and \cite{CHKO24}. 
\end{definition}

\begin{proposition} \cite[Theorem 1.5]{CHKO24}
The principal $A$-determinant $E_A$ for $\PP^1 \times \PP^1 \times \PP^n$ is the product of all $2$-minors of $W$ together with all $2 \times 2 \times 2$ hyperdeterminants $H_{k_1k_2}$ and all $2 \times 2 \times 3$ hyperdeterminants $H_{k_1k_2k_3}$.  
\label{prop:factors_principal_A_det}
\end{proposition}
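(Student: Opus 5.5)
The plan is to apply the factorization $E_A=\prod_\Gamma \Disc{\Gamma\cap\ZZ^d}$ directly. The toric variety $\mathcal X_n$ is smooth, since it is a product of projective spaces, so the definition of $E_A$ above applies. It then suffices to list the faces of $P=\Delta_1\times\Delta_1\times\Delta_n$ and to find the $A'$-discriminant of each face. Every face is a product $F_1\times F_2\times F_3$, where $F_1,F_2\in\{\text{vertex},\Delta_1\}$ and $F_3=\Delta_S$ is the face of $\Delta_n$ spanned by an index set $S\subseteq\{0,\ldots,n\}$. Its lattice points form the Segre configuration of a product of at most three simplices. The restricted polynomial is the corresponding subtensor $W|_\Gamma$ read as a multilinear form. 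So I will prove the following: the discriminant of each face is either trivial, or a $2$-minor, or $H_{k_1k_2}$, or $H_{k_1k_2k_3}$, with each of these occurring exactly once.

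\emph{Trivial faces.} If $\Gamma$ is a single simplex (vertex $\times$ vertex $\times$ $\Delta_S$, or $\Delta_1\times$ vertex $\times$ vertex, and so on), the polynomial is affine-linear in suitable coordinates. Its zero set in the torus is then never singular, so the discriminant is trivial.

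\emph{Two-factor faces.} The faces of the form $\Delta_1\times\Delta_S$, up to the roles of the factors, give a bilinear form $\sum_{a,b} c_{ab}u_av_b$ with coefficient matrix of size $2\times|S|$.
\begin{itemize}
\item If $|S|=2$, the form has a singular point exactly when the $2\times 2$ determinant vanishes. This produces $F_{i\bullet(k_1,k_2)}$ and $F_{\bullet j(k_1,k_2)}$, and the faces $\Delta_1\times\Delta_1\times\{k\}$ produce $F_{\bullet\bullet k}$.
\item If $|S|\ge 3$, a singular point forces the linear system $\sum_a c_{ab}u_a=0$ for all $b\in S$, together with $\sum_b c_{ab}v_b=0$, to have a nonzero torus solution. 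This is a rank condition of codimension at least $2$, so the discriminant is trivial.
\end{itemize}

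\emph{Three-factor faces.} Take $\Gamma=\Delta_1\times\Delta_1\times\Delta_S$. Writing $f|_\Gamma=\sum_{k\in S} z_k q_k(x,y)$, the partial derivatives in the $z_k$ show that a singular point forces $q_k(x,y)=0$ for all $k\in S$ at a common point of $\PP^1\times\PP^1$, with $(z_k)$ in the corresponding left kernel.
\begin{itemize}
\item $|S|=2$: the condition is that the pencil $z_{k_1}q_{k_1}+z_{k_2}q_{k_2}$ contains a singular member, which is the classical $2\times 2\times 2$ hyperdeterminant $H_{k_1k_2}$.
\item $|S|=3$: three bilinear forms on $\PP^1\times\PP^1$ have a common zero in codimension one. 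The vanishing locus is cut out by $\mathrm{Res}(q_{k_1},q_{k_2},q_{k_3})$, which is the boundary-format hyperdeterminant $H_{k_1k_2k_3}$ as in GKZ Chapter 14. I would show that it is reduced and irreducible by noting that it is the $2\times 2\times 3$ hyperdeterminant, which is irreducible of degree $6$.
\item $|S|\ge 4$: four or more generic bilinear forms on the surface $\PP^1\times\PP^1$ have no common zero, and the locus where they do has codimension at least $2$. Equivalently, the format $2\times2\times|S|$ violates the GKZ condition $|S|-1\le 1+1$, so the dual variety is defective and the discriminant is trivial.
\end{itemize}

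I expect the main obstacle to be the $|S|=3$ step: checking that the closure of the singular locus is exactly the resultant hypersurface with multiplicity one. I also need that solutions with some coordinates equal to zero do not add extra components. For this I would rely on the general GKZ criterion for which hyperdeterminants of boundary format are nontrivial, and on the known equality between the boundary-format hyperdeterminant and the resultant.
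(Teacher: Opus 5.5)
Your face-by-face argument is correct, and it is the route the paper implicitly takes. The paper writes $E_A=\prod_\Gamma \Disc{\Gamma\cap\ZZ^n}$ and then imports the list of nontrivial factors from \cite[Theorem 1.5]{CHKO24} without reproving it. Your case analysis fills in that list: square versus non-square $2\times|S|$ Segre configurations, the GKZ nontriviality criterion forcing $|S|\le 3$, and the boundary-format identity (hyperdeterminant $=$ resultant) for $|S|=3$.

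One sentence in the $|S|=2$ case needs correcting. Saying that ``the pencil $z_{k_1}q_{k_1}+z_{k_2}q_{k_2}$ contains a singular member'' imposes no condition at all. The form $\det(z_{k_1}W_{\bullet\bullet k_1}+z_{k_2}W_{\bullet\bullet k_2})$ is a binary quadratic in $z$, so it always has a root in $\PP^1$, and every pencil contains a singular member. Your own derivative computation gives something sharper: $q_{k_1}=q_{k_2}=0$ at $(x,y)$, and the member $\sum z_kq_k$ is singular at that same point. So the real condition is that some member of the pencil is singular at a \emph{base point}. Equivalently, the two $(1,1)$-curves are tangent, or the binary quadratic above has a double root. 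The discriminant of that quadratic is Cayley's formula for $H_{k_1k_2}$; compare the paper's lemma that the discriminant of $\det T_{ij}(y)$ is $H_{ij}$, which uses a different slicing of the same tensor. With this fix, the identification of the $\Delta_1\times\Delta_1\times\Delta_{\{k_1,k_2\}}$ discriminant with $H_{k_1k_2}$ goes through, and the rest of your argument stands.
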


Throughout this paper, we represent subsets of the factors of the principal $A$-determinant of $\PP^1 \times \PP^1 \times \PP^n$ by unions of faces  of a tower of $n$ cubes. Each $2$-dimensional face of any cube as well as the union of two horizontal edges on the four faces $W_{0 \bullet \bullet}, W_{1 \bullet \bullet}, W_{\bullet 0 \bullet}$, and $W_{\bullet 1 \bullet}$
corresponds to a $2$-minor of $W$.
%Particularly, the tops and bottoms of each cube correspond to the determinant of the matrix $W_{\bullet \bullet k}$ and $W_{\bullet \bullet (k+1)}$ respectively.
The ``cube'' obtained by
a choice of any two slices $W_{\bullet \bullet i}$ and $W_{\bullet \bullet j}$ corresponds to a $2 \times 2 \times 2$ hyperdeterminant factor of $E_A$.  
We give examples of this in Figure \ref{fig:222_Hdets} below.

\begin{figure}[H]
    \centering
    \includegraphics[scale=1.1]{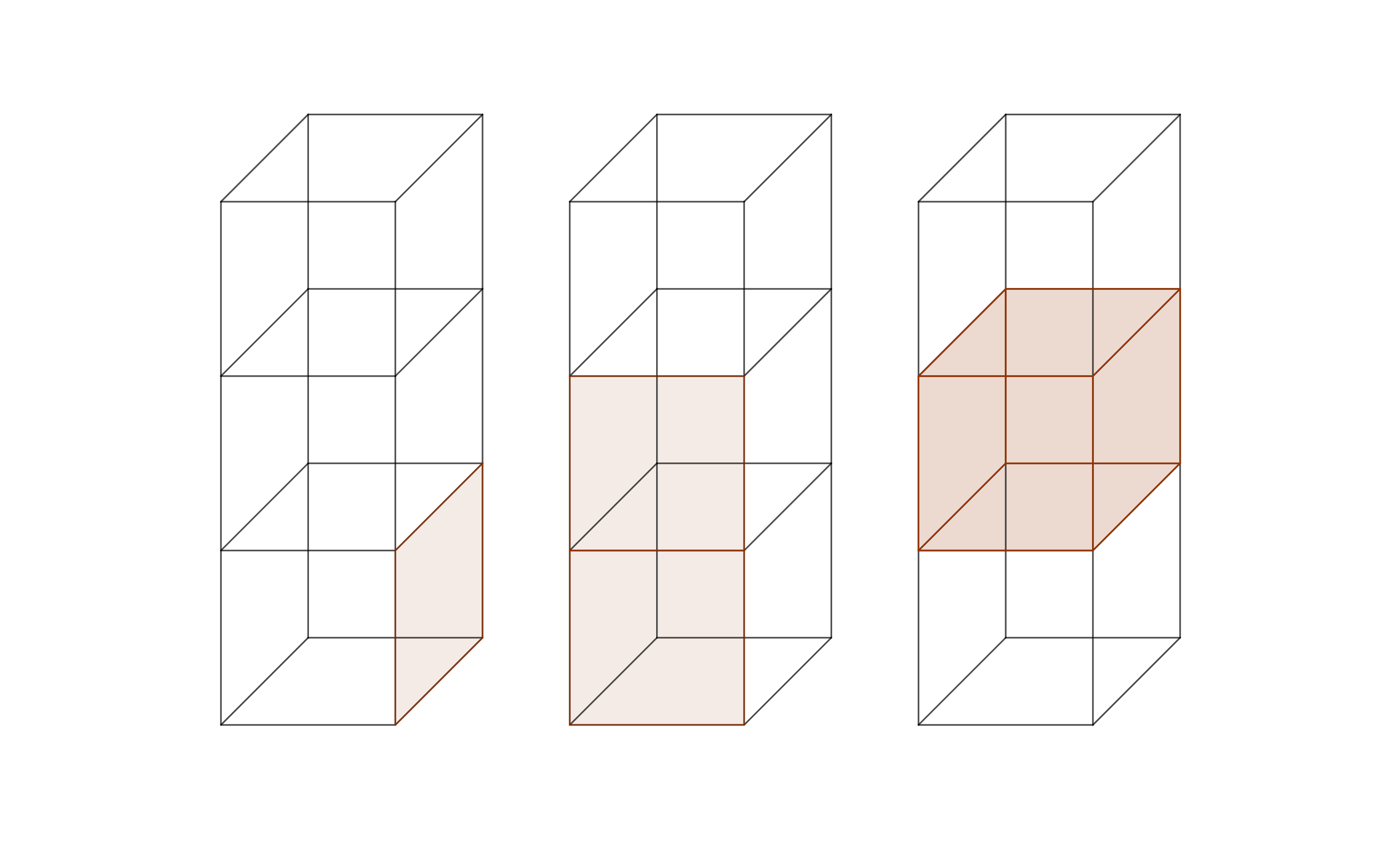}
    \caption{Two different $2 \times 2$ determinant factors and one $2 \times 2 \times 2$ hyperdeterminant factor of $E_A$ for $n = 2$}
    \label{fig:222_Hdets}
\end{figure}

As our main result in this paper we settle Conjecture \ref{conj:main-conjecture}. One question this conjecture raises is which subsets of the factors of the principal $A$-determinant can simultaneously vanish while the rest does not. We answer this question for $\PP^1 \times \PP^1 \times \PP^1$ and give some partial results for $\PP^1 \times \PP^1 \times \PP^2$.
%Now we return to the question of understanding the maximum likelihood degree of $X_{W,n}$.
%We remind the reader of the following fact:

%\begin{proposition}
%    The ML degree of $X_{W,n}$ is at most the degree of the toric variety $\mathcal X_n$. Moreover, $\mldeg(X_{W,n}) < \deg(\mathcal X_n)$ if and only if $E_A(W) = 0$ where $E_A$ is the principal $A$-determinant.
%    \label{prop:ML_degree_bounded_by_degree}
%\end{proposition} 

%We will see later on that the degree to which $\mldeg(X_{W,n})$ is less than $\deg(\mathcal X_n)$ is decided purely by which factors of $E_A$ vanish at $W$.
%So it becomes useful to understand fully when some subset of the factors of $E_A$ vanishing causes some other factor of $E_A$ to vanish.
%Below we provide a diagram which describes all such relations.

\subsection*{$\PP^1 \times \PP^1 \times \PP^1$}
In the case of $\PP^1 \times \PP^1 \times \PP^1$, we have a $2 \times 2 \times 2$ tensor $W$ with six $2$-minors and a single $2 \times 2 \times 2$ hyperdeterminant. Its six $2$-minors 
are $F_{0\bullet \bullet}$,  $F_{1\bullet \bullet}$, $F_{\bullet 0 \bullet}$,  $F_{\bullet 1 \bullet}$, $F_{\bullet \bullet 0}$,  $F_{\bullet \bullet 1}$, and we denote the unique hyperdeterminant by $H$. When we say that a subset of the factors of the principal $A$-determinant $E_A$ vanish, we also mean that the rest of the factors do not vanish. 
\begin{proposition} \label{prop:factors 2x2x2}
The factors of the principal $A$-determinant $E_A$ for $\PP^1 \times \PP^1 \times \PP^1$ vanish  according to the following patterns:
\begin{itemize}
    \item[a)] Exactly one factor vanishes where there are $7$ choices. 
    \item[b)] Exactly two factors vanish where there are $\binom{7}{2}$ choices. 
    \item[c)] Exactly $F_{0\bullet \bullet}, F_{\bullet 0 \bullet}$, and $F_{\bullet \bullet 0}$ vanish; there are $8$ such choices corresponding to $8$ "corners" of the $2 \times 2 \times 2$ tensor $W$. 
    \item[d)] Exactly $F_{0\bullet \bullet}, F_{1 \bullet \bullet}, F_{\bullet 0 \bullet}, F_{\bullet 1 \bullet}$, and $H$ vanish; there are $3$ such choices corresponding to two pairs of opposite faces of $W$. 
    \item[e)] All $7$ factors vanish. 
\end{itemize}
\end{proposition}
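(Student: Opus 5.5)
We have to classify exactly which subsets of the seven factors $F_{0\bullet\bullet}, F_{1\bullet\bullet}, F_{\bullet0\bullet}, F_{\bullet1\bullet}, F_{\bullet\bullet0}, F_{\bullet\bullet1}, H$ can form the vanishing set of some $W \in (\CC^*)^8$. That means two things. First, we exhibit a witness tensor for every pattern listed in a)--e). Second, we prove that no other subset occurs.

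\textbf{Reduction by symmetry.} We use the symmetry group of the cube, extended by index swaps. The group $S_2 \wr S_3$ permutes the axes and swaps $0,1$ in each coordinate. It acts on the six $2$-minors as on the faces of the cube, up to sign, and it fixes $H$ up to sign. The torus $(\CC^*)^2\times(\CC^*)^2\times(\CC^*)^2$ rescales the coordinates; it multiplies every factor by a unit, so vanishing patterns are invariant.

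Using the torus we normalize $w_{000}=w_{001}=w_{010}=w_{100}=1$. This leaves four free parameters $a=w_{110}$, $b=w_{101}$, $c=w_{011}$, $d=w_{111}$. In these coordinates the three minors through the corner $000$ become
\[
F_{\bullet\bullet0}=a-1,\qquad F_{\bullet0\bullet}=b-1,\qquad F_{0\bullet\bullet}=c-1 .
\]
The three opposite minors are $d-bc$, $d-ac$, $d-ab$ up to the appropriate ordering and sign. The hyperdeterminant is then an explicit quartic in $a,b,c,d$.

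\textbf{Case analysis.} Everything is reduced to elementary algebra in $(a,b,c,d)$, organized by how many minors vanish and by their configuration up to symmetry. The possible configurations of vanishing minors are:
\begin{itemize}
\item[(i)] no minors;
\item[(ii)] one face;
\item[(iii)] two adjacent faces, or two opposite faces;
\item[(iv)] three faces at a corner, or three faces forming a ``U'' (two opposite faces plus one more);
\item[(v)] four faces;
\item[(vi)] five or six faces.
\end{itemize}

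The key structural facts to establish are the following.
\begin{enumerate}
\item If two adjacent minors vanish, say $a=b=1$, then $H$ becomes a perfect square that is nonzero generically. $H$ is then forced to vanish only by further coincidences, which must be checked.
\item If three faces through a corner vanish ($a=b=c=1$), then $H=(d-1)^2$ up to a constant. Simultaneously the opposite minors become $d-1$. So vanishing of any one of $H$ or the three opposite faces forces all seven factors to vanish, which is pattern e).
\item Two opposite faces vanishing, e.g.\ $F_{\bullet\bullet0}=F_{\bullet\bullet1}=0$, means both slices are rank one. Then $H$ reduces to a square of a $2\times2$ determinant-like expression. A third vanishing face in a ``U'' configuration forces the fourth face of that belt and $H$ to vanish, which gives pattern d). It cannot force the remaining pair, since otherwise we would be at e).
\item $H$ vanishing together with exactly one minor, or with two minors, is realizable. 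Here we solve $H=0$ on the corresponding linear or quadric slice and check that a generic point avoids the other factors.
\end{enumerate}

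For each claimed pattern we then give an explicit point as a witness. For b) we use the three symmetry types of pairs: two adjacent faces, two opposite faces, and one face together with $H$.

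\textbf{Counting.} The counts follow directly:
\begin{itemize}
\item $7$ single factors;
\item $\binom{7}{2}=21$ pairs, after checking all three pair types;
\item $8$ corners;
\item $3$ belts of four faces (one per axis);
\item one full pattern.
\end{itemize}

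\textbf{Main obstacle.} The hardest step is the exclusion direction. We must show that patterns such as ``three faces with $H\neq0$ in a U'', ``$H$ plus three corner faces'', or ``four faces not forming a belt'' cannot occur. This requires identities showing that certain vanishings force others.

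I would attack this by computing $H$ restricted to each relevant linear subspace in the normalized coordinates and factoring it there. For example, on $a=b=1$ one expects $H=(d-c)^2\cdot(\text{unit})$ or a similar form, possibly multiplied by a minor. These factorizations immediately give the forced implications. I would verify them by hand or by a short computer algebra check.
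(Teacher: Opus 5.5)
Your method is correct, with one false claim to delete and one justification to replace (second paragraph). It differs in execution from the paper. The paper proves this proposition computationally: for each combinatorial set of factors it saturates the ideal by $\langle \prod_{ijk} w_{ijk}\rangle$ and reads off which further factors lie in the minimal primes. Alternatively, it cites the quadric-intersection analysis of Section~4, which itself invokes several of these forced implications.

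You instead use the torus to normalize $w_{000}=w_{001}=w_{010}=w_{100}=1$. Everything then reduces to factoring restrictions of
\[
H=a^2+b^2+c^2+d^2-2(ab+ac+ad+bc+bd+cd)+4abc+4d .
\]
The factorizations you anticipate do hold:
\begin{itemize}
\item On $a=1$: $H=(b+c-d-1)^2$. On $d=b+c-1$ the other minors are $F_{\bullet0\bullet}=F_{\bullet1\bullet}=b-1$, $F_{0\bullet\bullet}=F_{1\bullet\bullet}=c-1$ and $F_{\bullet\bullet1}=-(b-1)(c-1)$.
\item On $a=b=1$: $H=(d-c)^2$, with $F_{\bullet\bullet1}=F_{\bullet1\bullet}=d-c$, $F_{0\bullet\bullet}=c-1$ and $F_{1\bullet\bullet}=d-1$.
\item On $a=1$, $d=bc$: $H=(b-1)^2(c-1)^2$, with $F_{\bullet1\bullet}=c(b-1)$ and $F_{1\bullet\bullet}=b(c-1)$.
\item On $a=b=c=1$: $H=(d-1)^2$, and all three opposite minors equal $d-1$.
\end{itemize}
Together with the cube symmetries, these four computations decide all $2^7$ subsets by hand.

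The perfect squares have a conceptual reason. $H$ is the discriminant of $\det(s\,W_{\bullet\bullet0}+t\,W_{\bullet\bullet1})=F_{\bullet\bullet0}s^2+B\,st+F_{\bullet\bullet1}t^2$, and likewise of the analogous forms in the other two directions (the paper's $G_1$, $G_2$). So as soon as one minor vanishes, $H=B^2$. Your route gives a transparent, computer-free proof that also explains the implications the paper quotes in Section~4. The paper's saturation approach is mechanical and complete, but opaque.

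Two points must be fixed.
\begin{itemize}
\item Your item~4 asserts that $H$ together with \emph{two} minors is realizable. This is false. It contradicts the statement, which lists no such pattern, and your own items~1 and~3. A hook plus $H$ forces $d=c$, hence the whole belt. An opposite pair plus $H$ forces $b=1$ or $c=1$, again a belt. Only ``$H$ plus exactly one minor'' is realizable, for example on $a=1$, $d=b+c-1$ with generic $b,c$.
\item In item~3, realizability of d) does not follow from ``otherwise we would be at e)''. The actual reason is that on the belt locus $a=b=1$, $d=c$ the two remaining minors both equal $c-1$, which is nonzero for generic $c$.
\end{itemize}
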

\begin{proof}
This can be computed by saturating all possible combinatorial combinations of factors by $\langle \prod_{ijk} w_{ijk}\rangle$ and seeing which other factors are contained in the irreducible components.
 Alternatively, this is also a corollary of  Theorem \ref{thm:P1 x P1 x P1} which we will prove in Section \ref{sec:P1 x P1 x P1}. We  summarize the result in 
 Figure \ref{fig:222_factor_relations}. The top row corresponds to cases in a), and the second row corresponds to cases in b). In the third row only the right most configuration is realizable; this corresponds to cases in c). None of the types of configurations in the forth  and sixth rows can be realized. In the fifth row, only the left most configuration is possible; this corresponds to the cases in d). The unique possibility in the bottom row corresponds to the case in e). 
\end{proof}
\begin{remark}
    Figure \ref{fig:222_factor_relations}  
    captures all vanishing relations among the factors of the principal $A$-determinant for $\PP^1 \times \PP^1 \times \PP^1$.
A vanishing of a $2$-minor is represented by the corresponding face of the cube being shaded in brown, while the vanishing of $H$ is represented by the entire cube being shaded in blue. Each configuration represents multiple such configurations by symmetry. For instance, the first configuration in the top row stands for six such configurations, one for each face of the cube. The configurations are ordered so that every configuration on the same row consists of the same number of factors. We included both realizable and non-realizable configurations. The ones that are highlighted in green correspond to those that are realizable: the vanishing of such a group of factors will not cause any other factors to vanish. If a configuration is obtained by the addition of a single factor from another realizable configuration, a black arrow is included between the two configurations. A red arrow indicates an implication of vanishing of factors: if the vanishing of a set of factors implies the vanishing of further factors, we placed a red arrow between the corresponding configurations. 
\end{remark}
\begin{remark}
By Proposition \ref{prop:deficient-mldeg}, the complement of the real hypersurface arrangement in $(\RR^*)^8$ defined by $E_A(W) = 0$ consists of $W \in (\RR^*)^8$ where $\chi(Y_{W,1}) = 6$. Each region in the complement is determined by which side of the hypersurfaces 
defined by the six minors and the hyperdeterminant it lies on, positive or negative.
Hence, there are $2^7 = 128$ possible sign patterns in this case.
A simple computation using \verb|HypersurfaceRegions.jl|
shows that $68$ of these sign patterns are realizable. Out of these, $64 = \frac{128}{2}$ are all possible sign patterns corresponding to regions in $H^+$. The remaining four are 
\begin{align*}
    & + + + + + + - \\
    & + + - - - - - \\
    & - - + + - - - \\
    & - - - - + + - \\
\end{align*}
where signs correspond to 
$F_{0\bullet \bullet}$,  $F_{1\bullet \bullet}$, $F_{\bullet 0 \bullet}$,  $F_{\bullet 1 \bullet}$, $F_{\bullet \bullet 0}$,  $F_{\bullet \bullet 1}$, and $H$, in that order.
\end{remark}

\begin{figure}[H]
    \centering
    \includegraphics[scale=0.285]{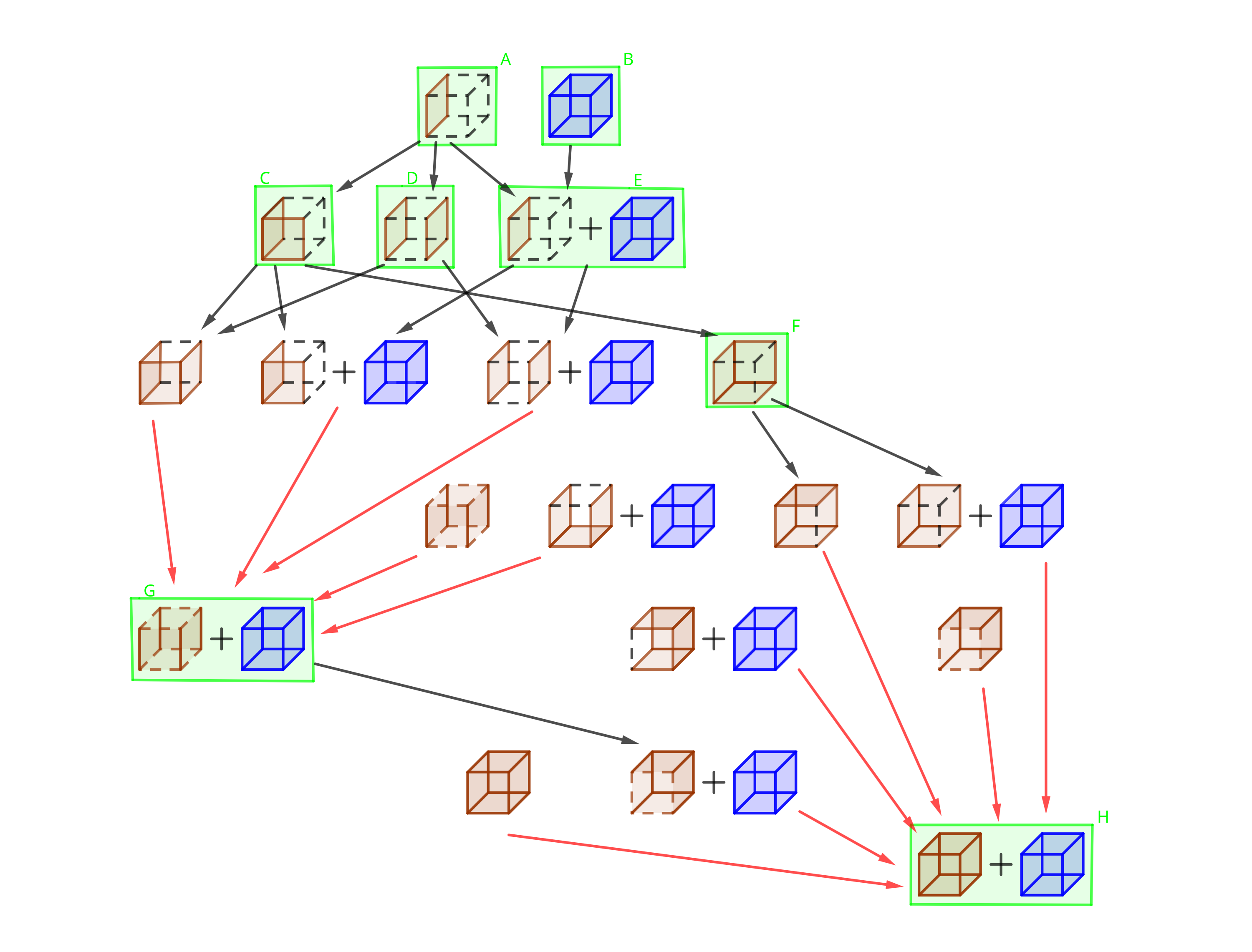}
    \caption{Vanishing relations among factors of $E_A$ for the case $n = 1$}
    \label{fig:222_factor_relations}
\end{figure}

\subsection*{$\PP^1 \times \PP^1 \times \PP^2$}
A complete classification of the vanishing patterns of the factors of $E_A$ in the case of $\PP^1 \times \PP^1 \times \PP^2$ is out of our reach. Here we briefly report the cases with two or three vanishing factors which give rise to further factors vanishing. Recall that now $W$ is a $2 \times 2 \times 3$ tensor. For ease of notation, we first introduce the following.

\begin{definition}
\begin{enumerate}
\item Let $F= \{F_1,F_2\}$ be a set of two minors not on the same face of $W$. We say $F$ is a \emph{hook} if the minors $F_1$ and $F_2$ share exactly two variables.
\item Let $F= \{F_1,F_2\}$ be as above. We say $F$ is a \emph{mirror} if
    \begin{itemize}
    \item there is a $2 \times 2 \times 2$ subtensor of $W$ containing $F$ and
    \item the variables in $F_1$ and $F_2$ are disjoint.
    \end{itemize}
\item Let $F= \{F_1,F_2,F_3\}$ be a set of three distinct minors of $W$. Then $F$ is a \emph{square cup} if
    \begin{itemize}
    \item there is a $2 \times 2 \times 2$ subtensor of $W$ containing $F$ and
    \item the variables in $F_1$ and $F_2$ are disjoint.
    \end{itemize} 
 \item Let $F= \{F_1,F_2,F_3,F_4\}$ be a set of distinct minors  of $W$. We say $F$ is a \emph{cubic frame} if
    \begin{itemize}
    \item there is a $2 \times 2 \times 2$ subtensor of $W$ containing $F$,
    \item the variables in $F_1$ and $F_2$ are disjoint and
    \item the variables in $F_3$ and $F_4$ are disjoint.
    \end{itemize} 
\end{enumerate}
\end{definition}
In the list below we describe (almost) all configurations with two or three vanishing factors which imply further factors vanishing. For this subsection only, we denote the unique $2 \times 2 \times 3$ hyperdeterminant by $H$. 
We studied all possible combinatorial cases as illustrated in the following example:

\begin{example}
    Assume the vanishing of a mirror with minors contained in two $2 \times 3$ faces of $W$, say $F_{0 \bullet (0,1)} = F_{1 \bullet (0,1)} =0$ as well as $H=0$. The saturation of this ideal by $\langle \prod_{ijk} w_{ijk} \rangle$ has four minimal primes. The first one contains $F_{0 \bullet (0,2)}$ and $F_{0 \bullet (1,2)}$, the second one contains $F_{1 \bullet (0,2)}$ and $F_{1 \bullet (1,2)}$, the third one contains $F_{\bullet 0 (0,1)}$, $F_{\bullet 1 (0,1)}$ and $H_{01}$ and the fourth one contains $F_{\bullet \bullet 0}$, $F_{\bullet \bullet 1}$ and $H_{01}$. Thus, either one of the sides $W_{0 \bullet \bullet}$ or $W_{1 \bullet \bullet}$ has all its minors vanish, or $H_{01}$ as well as one of the cubic frames containing the mirror $F_{0 \bullet (0,1)}$, $F_{1 \bullet (0,1)}$ vanishes. This is precisely case 5 in the list below.
\end{example}

\begin{enumerate}
\item Two minors within the same face of $W$ vanish $\Longrightarrow$ all three minors within that face and $H$ vanish. 

\item The minors in a square cup vanish 
$\Longrightarrow$ the minors of the cubic frame containing the square cup in the same $2 \times 2 \times 2$ subtensor, its hyperdeterminant, as well as $H$ vanish. 

\item The minors in a  hook and the  $2\times 2\times 2$-hyperdeterminant of the corresponding subtensor vanish 
$\Longrightarrow$ the minors of the cubic frame containing the hook and $H$ vanish as well.

\item The minors in a mirror and the  $2\times 2\times 2$-hyperdeterminant of the corresponding~subtensor vanish 
$\Longrightarrow$ the minors in one of the cubic frames containing the mirror and $H$ vanish as well.

\item A mirror with minors contained in two $2 \times 3$ faces $\{S_1,S_2\}$ of $W$ and $H$ vanish \\
\begin{minipage}[t]{0.01\textwidth}
$\Longrightarrow$ 
\end{minipage}
\begin{minipage}[t]{0.8\textwidth}
\begin{itemize}
\setlength{\itemsep}{0pt}
    \item all minors within $S_1$ or $S_2$ vanish or
    \item the minors in one of the cubic frames containing the mirror and the hyperdeterminant of the corresponding $2 \times 2 \times 2$ subtensor vanish. 
\end{itemize}
\end{minipage}

\item A hook with minors contained in two adjacent $2 \times 3$ faces  $\{S_1,S_2\}$ of $W$ and $H$ vanish 
\begin{minipage}[t]{0.01\textwidth}
$\Longrightarrow$ 
\end{minipage}
\begin{minipage}[t]{0.8\textwidth}
\begin{itemize}
\setlength{\itemsep}{0pt}
    \item all minors within $S_1$ or $S_2$ vanish or
    \item the minors in the cubic frame containing the hook and the hyperdeterminant of the corresponding the $2\times 2 \times 2$ subtensor  vanish. 
\end{itemize}
\end{minipage}

\item A $2\times 2 \times 2$-hyperdeterminant $G$, a minor $U$ coming from  a  slice of $W$  contained in the corresponding subtensor of $G$ and $H$ vanish \\
\begin{minipage}[t]{0.01\textwidth}
$\Longrightarrow$ 
\end{minipage}
\begin{minipage}[t]{0.8\textwidth}
\begin{itemize}
\setlength{\itemsep}{0pt}
    \item  the $2\times 2\times 2$-hyperdeterminant of the other subtensor containing $U$ vanish or
    \item the minors in one of the cubic frames containing $U$ vanish. 
\end{itemize}
\end{minipage}

\item A $2\times 2 \times 2$-hyperdeterminant $G$, a minor $U$ coming from  a $2 \times 3$ face $S$ of $W$ and contained in the corresponding subtensor of $G$ and $H$ vanish \\ 
 \begin{minipage}[t]{0.01\textwidth}
$\Longrightarrow$ 
\end{minipage}
\begin{minipage}[t]{0.8\textwidth}
\begin{itemize}
\setlength{\itemsep}{0pt}
    \item all minors within $S$ vanish or
    \item the minors in one of the cubic frames containing $U$ vanish. 
\end{itemize}
\end{minipage}

\item Two $2\times 2 \times 2$-hyperdeterminants and the minor fully contained in both  corresponding subtensors vanish
$\Longrightarrow $ $H$ vanishes as well.
\end{enumerate}

This list is complete except for two cases, for which the computations were too expensive. We do not know if there are any implications if
\begin{itemize}
\setlength{\itemsep}{0pt}
    \item all three $2 \times 2 \times 2$-hyperdeterminants vanish or
    \item two of the $2 \times 2 \times 2$-hyperdeterminants and $H$ vanish.
\end{itemize}

Additionally, the following result will be useful for proving Theorem \ref{thm:realizability}.
\begin{lemma}\label{lem: minors H}
    A set of minors vanishing will cause $H$ to vanish if and only if that set contains either a square cup or two out of three minors of a face of $W$.
\end{lemma}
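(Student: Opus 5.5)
Interpret $H=\mathrm{Res}(q_0,q_1,q_2)$ geometrically: $H(W)=0$ if and only if the three bilinear forms $q_0,q_1,q_2$ have a common zero in $\PP^1\times\PP^1$. The $15$ minors of the $2\times2\times3$ tensor $W$ then have simple meanings. The slice minor $F_{\bullet\bullet k}$ vanishes iff $q_k$ factors as a product of two linear forms, one in $x$ and one in $y$. The face minor $F_{i\bullet(k_1,k_2)}$ vanishes iff the restrictions of $q_{k_1}$ and $q_{k_2}$ to the line $\{x=e_i\}$ are proportional linear forms in $y$, i.e. they share their unique zero there (all entries of $W$ are nonzero). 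Similarly, $F_{\bullet j(k_1,k_2)}$ concerns the line $\{y=e_j\}$. The proof splits into sufficiency and necessity.

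\emph{Sufficiency.} Suppose first that two of the three minors of one face vanish, say $F_{0\bullet(0,1)}=F_{0\bullet(0,2)}=0$. Then $q_0|_{x=e_0}$ is proportional to both $q_1|_{x=e_0}$ and $q_2|_{x=e_0}$. Hence all three restrictions have the same zero $y^*\in\PP^1$, and $(e_0,y^*)$ is a common zero, so $H=0$.

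Now suppose a square cup vanishes inside the subtensor $W^{k_1k_2}$. By the symmetries of the tensor (swapping $x_0\leftrightarrow x_1$ or $y_0\leftrightarrow y_1$, exchanging the roles of $x$ and $y$, permuting $k$) we may reduce to one representative, e.g. $F_{0\bullet(k_1,k_2)}=F_{1\bullet(k_1,k_2)}=F_{\bullet 0(k_1,k_2)}=0$. I would argue directly: the pencil $q_{k_1}+t q_{k_2}$ contains a form vanishing identically on the line $x=e_0$ and one vanishing identically on $x=e_1$. Combining this with the third condition forces some member of the pencil to vanish on a line of the ruling that meets every zero set of the third form $q_{k_3}$. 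That intersection gives a common zero. If this direct argument gets messy, I would fall back on item~2 of the list above, which already records (by a saturation computation) that a vanishing square cup forces $H=0$.

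\emph{Necessity.} Call a set $S$ of minors \emph{good} if it contains neither a square cup nor two minors of one face. We must show that for every good $S$ there is a $W\in(\CC^*)^{12}$ on which all minors of $S$ vanish and $H\neq0$. Since $H\ne0$ is an open condition, it suffices to treat maximal good sets. Indeed, a witness for a maximal good set $S$ on which exactly the minors in $S$ vanish can be perturbed within the irreducible variety cut out by the subset $S'\subseteq S$ so that the extra minors become nonzero while $H$ stays nonzero. This uses irreducibility of the relevant varieties after saturation by $\prod w_{ijk}$, which I would check by computer.

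The steps are:
\begin{enumerate}
\item Enumerate the good sets among the $15$ minors up to the symmetry group of the $2\times2\times3$ tensor. This group is generated by the two $\ZZ/2$ flips, the exchange of the first two factors, and $S_3$ on the third index.
\item Extract the maximal good sets from this enumeration.
\item For each maximal orbit representative, exhibit an explicit integer tensor $W$ with the prescribed minors vanishing and $H(W)\neq0$, built by choosing proportional rows as needed.
\end{enumerate}

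The main obstacle is the combinatorial bookkeeping in the necessity direction: making sure the list of maximal good configurations is complete, and making sure that imposing the chosen minors does not force the extra conditions recorded in items 1--9 of the list, which would make $H$ vanish. I would carry out the enumeration by computer, and verify each witness by a direct evaluation of $H$.
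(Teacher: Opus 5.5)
Your plan is correct in substance, and it takes a somewhat different route from the paper, whose whole proof is the sentence that the lemma follows by exhaustive computation. For the face case you give an actual proof. Restricting $q_k$ to the ruling line $\{x=e_i\}$ gives a linear form in $y$ with a unique zero, since all entries are nonzero. So two vanishing minors of one $2\times 3$ face produce a common zero of $q_0,q_1,q_2$ in $\PP^1\times\PP^1$, and hence $H=0$. For the converse you still rely on the computer, as the paper does. The difference is that you organize the computation as checkable certificates: one explicit $W\in(\CC^*)^{12}$ with $H(W)\neq 0$ for each symmetry orbit of maximal good sets, rather than saturations.

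The reduction to maximal good sets is simpler than you make it. A $W$ on which all minors of $S$ vanish and $H(W)\neq 0$ is already a witness for every subset of $S$, because the lemma does not require the remaining minors to be nonzero. So the openness/perturbation step and the irreducibility checks can be dropped. Also be aware that witnesses cannot always be built row by row. For the good set $\{F_{0\bullet(0,2)},F_{\bullet 0(0,2)},F_{1\bullet(1,2)},F_{\bullet 1(1,2)}\}$, the four conditions form a square linear system in the entries of $W_{\bullet\bullet 2}$. It has a nonzero solution only if $w_{100}w_{011}=w_{010}w_{101}$.

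The one incorrect step is reducing square cups to a single representative. The group you list preserves the set of slice minors $F_{\bullet\bullet k}$, so the number of slices in a cup (two, one or none) is invariant. There are therefore three orbits, for example:
\begin{itemize}
\item $\{F_{\bullet\bullet k_1},F_{\bullet\bullet k_2},F_{0\bullet(k_1,k_2)}\}$ (two slices);
\item $\{F_{0\bullet(k_1,k_2)},F_{1\bullet(k_1,k_2)},F_{\bullet\bullet k_1}\}$ (one slice);
\item your $\{F_{0\bullet(k_1,k_2)},F_{1\bullet(k_1,k_2)},F_{\bullet 0(k_1,k_2)}\}$ (no slices).
\end{itemize}

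Your pencil sketch is also imprecise for your chosen representative. The conditions $F_{0\bullet(k_1,k_2)}=F_{1\bullet(k_1,k_2)}=0$ say $q_{k_2}=\lambda q_{k_1}$ on $\{x=e_0\}$ and $q_{k_2}=\mu q_{k_1}$ on $\{x=e_1\}$. Then $F_{\bullet 0(k_1,k_2)}=0$ forces $\lambda=\mu$, so $q_{k_2}=\lambda q_{k_1}$ identically. The same two-line computation in the other two orbits shows that $q_{k_1}$ and $q_{k_2}$ share a factor $\ell(x)$ or $m(y)$; equivalently, the fourth minor of the cubic frame vanishes.

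In every case $V(q_{k_1})\cap V(q_{k_2})$ contains a curve, either a ruling line or all of $V(q_{k_1})$. Such a curve always meets $V(q_{k_3})$ in $\PP^1\times\PP^1$, so $H=0$. This gives a uniform proof of the square-cup case without appealing to item~2 of the list in Section~2. With either this argument or your fallback, the sufficiency direction is complete.
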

\begin{proof}
    This can be seen by exhaustive computation.
\end{proof}

\section{ML degree of three-way independence models} \label{sec: three-way}

%\subsection{ML degree of Cayley polytopes}

In this section we start with observations regarding the ML degree of general scaled toric varieties. 
Let $f(x, z)$ be a Laurent polynomial of the form \begin{equation*} \label{eq:cayley_polynomial}
 f(x, z) = f_0(x) + \sum_{k=1}^n z_k f_k(x), \end{equation*}
 where $f_0, \dots, f_n \in \mathbb{C}[x_1^{\pm 1}, \dots, x_m^{\pm 1}]$ are Laurent polynomials in $m$ variables. Let $A \subseteq \ZZ^{n+m}$ denote the set of exponent vectors of the terms in $f(x, z)$, and let
 $W = ( w_a \in \CC^*  \suchthat a \in A)$  be the coefficients of these terms.  The scaled toric variety $X_{A,W} \subseteq \P^{\abs{A}-1}$ is given by the monomial parametrization  $(\C^*)^{n+m} \mapsto \P^{\abs{A}-1}$
\[ (t_1, \ldots, t_{n+m}) \mapsto [w_a t^a \suchthat a \in A] \in \P^{\abs{A}-1}. \]
As in the case of Proposition \ref{prop:ML degree = Euler char}, it is known from \cite[Theorem 1]{H13} and \cite[Section 6.4]{TW24} that the ML degree of  $X_{A,W}$ is equal to 
\[ \mldeg(X_{A,W}) = - (-1)^{n+m} \chi (V(f) \cap (\CC^*)^{n+m}), \]
where $V(f)$ is the affine hypersurface %in $(\CC^*)^{n+m}$ 
defined by the vanishing of $f(x,z)$. Our starting point is the following result.
\begin{theorem} \label{thm:Fevola+Matsubara-Heo}
\cite[Theorem 2.2]{FM24}
Let $f(x,z) = f_0(x) + \sum_{k=1}^n z_k f_k(x)$ be a Laurent polynomial where  $f_0,\ldots, f_n \in \CC[x_1^{\pm}, \ldots, x_m^{\pm}]$. Then
$$ \chi\left(V(f) \cap (\CC^*)^{m+n}\right) = (-1)^n \chi\left(V(f_0f_1\cdots f_n) \cap (\CC^*)^m\right).$$ 
\end{theorem}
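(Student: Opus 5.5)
The plan is to prove the statement by induction on $n$. The core of the argument is a fibration over the coordinate $z_n$, followed by an inclusion--exclusion on the $x$-torus. Throughout, write $V = V(f)\cap(\CC^*)^{m+n}$ and $V_k = V(f_k)\cap(\CC^*)^m$.

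For $n=0$ there is nothing to prove. For the inductive step, I would project $V$ to $(\CC^*)^m$ by forgetting all the $z$-variables at once, and look at the fibres. Fix $x\in(\CC^*)^m$. The fibre is the set of $z\in(\CC^*)^n$ with
$$\sum_{k=1}^n z_k f_k(x) = -f_0(x).$$
Its Euler characteristic depends only on which of the values $f_0(x),f_1(x),\ldots,f_n(x)$ vanish. So stratify $(\CC^*)^m$ into the locally closed pieces
$$S_I=\{x : f_k(x)=0 \iff k\in I\}, \qquad I\subseteq\{0,\dots,n\}.$$
Over each $S_I$ the projection is a trivial fibration, because after rescaling coordinates the fibre becomes a fixed linear hypersurface section of $(\CC^*)^n$. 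Hence
$$\chi(V)=\sum_I \chi(S_I)\,c_I,$$
where $c_I$ is the Euler characteristic of the fibre. This uses additivity and multiplicativity of $\chi$ for algebraic fibrations that are locally trivial on strata.

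Next I compute the $c_I$. Let $J=\{1,\dots,n\}\setminus I$. The fibre is the complex torus $(\CC^*)^{|I\cap\{1..n\}|}$ times the set
$$\Big\{z\in(\CC^*)^{J} : \sum_{k\in J} z_k f_k(x) = -f_0(x)\Big\}.$$
Any positive-dimensional torus factor gives $\chi=0$, so only strata with $I\subseteq\{0\}$ contribute. The two cases are as follows.
\begin{itemize}
\item If $I=\emptyset$, the fibre is a generic affine hyperplane in $(\CC^*)^n$ with all coefficients nonzero. Its complement in $(\CC^*)^n$ is the complement of $n+1$ generic hyperplanes in $\CC^n$. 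A standard computation (for instance, by induction with deletion--restriction) gives $\chi=(-1)^{n}$ for the complement. Since $\chi((\CC^*)^n)=0$, the fibre itself has $\chi=(-1)^{n-1}$.
\item If $I=\{0\}$, the fibre is $\{\sum_k z_kf_k=0\}\subset(\CC^*)^n$. This is a $\CC^*$-cone over a hypersurface, so $\chi=0$ when $n\geq 1$.
\end{itemize}
Therefore
$$\chi(V)=(-1)^{n-1}\,\chi(S_\emptyset),$$
and $S_\emptyset$ is exactly the complement of $V(f_0f_1\cdots f_n)$ in $(\CC^*)^m$. Since $\chi((\CC^*)^m)=0$ for $m\ge1$, we get
$$\chi(S_\emptyset)=-\chi\big(V(f_0\cdots f_n)\cap(\CC^*)^m\big),$$
which yields the stated sign $(-1)^n$. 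In the degenerate case $m=0$ the $f_k$ are nonzero constants and a direct check suffices. With this direct computation the induction is in fact unnecessary.

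The main obstacle is justifying that $\chi$ is multiplicative over the strata, i.e.\ that the projection restricted to each $S_I$ is a fibration with constant fibre type. I would handle this explicitly. On $S_\emptyset$, the substitution $z_k\mapsto -z_kf_0(x)/f_k(x)$ is an algebraic isomorphism of the preimage with $S_\emptyset\times\{\sum z_k=1\}$. On $S_{\{0\}}$, the rescaling $z_k\mapsto z_k/f_k(x)$ gives $S_{\{0\}}\times\{\sum z_k=0\}$. On the remaining strata, a free $\CC^*$-action on a coordinate $z_k$ with $f_k(x)=0$ shows directly that the preimage has $\chi=0$.

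A secondary check is the Euler characteristic of the affine hyperplane $\{z_1+\dots+z_n=1\}$ in $(\CC^*)^n$. I would confirm $\chi=(-1)^{n-1}$ by induction on $n$. Set $z_n=1-(z_1+\cdots+z_{n-1})$ and remove the locus where this is $0$; this gives
$$\chi_n=\chi\big((\CC^*)^{n-1}\big)-\chi_{n-1}=-\chi_{n-1},$$
with base case $\chi_1=1$.
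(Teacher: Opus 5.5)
The paper does not prove this statement; it quotes it from \cite[Theorem 2.2]{FM24}, so there is no internal proof to compare against. Your argument is the standard fibration proof of this identity, and it is correct and complete for $m\geq 1$. You project to the $x$-torus, stratify by the vanishing pattern of $f_0,\dots,f_n$, and trivialize the preimages of $S_\emptyset$ and $S_{\{0\}}$ by rescaling the $z_k$. Every other stratum is killed by a free $\CC^*$-action, and the remaining input is $\chi(\{z_1+\dots+z_n=1\}\cap(\CC^*)^n)=(-1)^{n-1}$. As you note, the announced induction on $n$ is not needed.

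One claim needs correcting: the case $m=0$ cannot be settled by a ``direct check'', because the statement is false there. Your formula $\chi(V)=(-1)^{n-1}\chi(S_\emptyset)$ still holds for $m=0$ and $n\geq 1$. However, $\chi((\CC^*)^0)=1$, so $\chi(S_\emptyset)=1-\chi(V(f_0\cdots f_n))$, and the two sides of the theorem differ by $(-1)^{n-1}$. For example, $f=1+z_1$ gives $1$ on the left and $0$ on the right. The identity therefore requires $m\geq 1$, which is exactly where you use $\chi((\CC^*)^m)=0$ to pass from $S_\emptyset$ to $V(f_0\cdots f_n)$. This does not affect the paper, which only uses the case $m=2$.
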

%\begin{theorem} \label{thm:Fevola+Matsubara-Heo}
%\cite[Theorem 2.2]{FM24}
%For $f(x,z)$ as in \eqref{eq:cayley_polynomial} we have
%$$ \chi\left(V(f) \right) = (-1)^n \chi\left(V(f_0f_1\cdots f_n)\right).$$ 
%\end{theorem}
Throughout the rest of the paper, for any positive integer $k$, we will use the notation 
 $[\bar{k}]\coloneqq \{0, 1, \ldots, k\}$. Now, for $\varnothing \neq I \subseteq [\bar{n}]$ let us denote by $C_I$ the intersection in the torus
\[ C_I = V(f_k \suchthat k \in I) \subseteq (\CC^*)^m. \] 
Via elementary exclusion-inclusion, one can go from the union $V(f_0 \ldots f_n)$ to intersections and obtain the following statement. 
\begin{corollary}
\label{cor:euler_characteristic_via_intersections}
    The Euler characteristic of $V(f) \cap (\CC^*)^{m+n}$ is equal to 
    \[ \chi( V(f) \cap (\CC^*)^{m+n}) =  (-1)^{n+1} \sum_{\varnothing \neq I \subseteq [\bar{n}]} (-1)^{\abs{I}} \chi( C_I), \] 
    and therefore the ML degree of $X_{A,W}$ is equal to 
    \[ \mldeg(X_{A,W}) = (-1)^m \sum_{\varnothing \neq I \subseteq [\bar{n}]} (-1)^{\abs{I}} \chi( C_I).\]
\end{corollary}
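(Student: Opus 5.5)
The plan is to combine Theorem \ref{thm:Fevola+Matsubara-Heo} with inclusion--exclusion for Euler characteristics of constructible sets. Write $U = V(f_0f_1\cdots f_n)\cap(\CC^*)^m$. Since $V(f_0\cdots f_n) = \bigcup_{k=0}^n V(f_k)$ as sets, $U = \bigcup_{k\in[\bar n]} V(f_k)\cap(\CC^*)^m$. Each $V(f_k)\cap(\CC^*)^m$ is a closed algebraic subset of the torus, and every finite intersection of them is exactly $C_I$. The topological Euler characteristic (equivalently, the compactly supported one for complex varieties) is additive on constructible sets, so
\[ \chi(U) = \sum_{\varnothing\neq I\subseteq[\bar n]} (-1)^{|I|+1}\chi(C_I). \]
I will justify this by induction on the number of sets, using $\chi(A\cup B)=\chi(A)+\chi(B)-\chi(A\cap B)$ for closed subvarieties. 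That identity comes from the decomposition $A\cup B = A \sqcup (B\setminus A\cap B)$ and the additivity $\chi(B)=\chi(B\setminus A\cap B)+\chi(A\cap B)$, valid because $\chi=\chi_c$ on complex algebraic varieties.

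Next, substitute into Theorem \ref{thm:Fevola+Matsubara-Heo}:
\[ \chi(V(f)\cap(\CC^*)^{m+n}) = (-1)^n\chi(U) = (-1)^{n+1}\sum_{I}(-1)^{|I|}\chi(C_I), \]
which is the first claim. For the second, use $\mldeg(X_{A,W}) = -(-1)^{n+m}\chi(V(f)\cap(\CC^*)^{n+m})$. Multiplying gives the prefactor $-(-1)^{n+m}(-1)^{n+1} = (-1)^{2n+m+2} = (-1)^m$, hence
\[ \mldeg(X_{A,W}) = (-1)^m\sum_{I}(-1)^{|I|}\chi(C_I). \]

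There is no real obstacle here. The only point needing care is the additivity of $\chi$, which holds because complex algebraic varieties satisfy $\chi=\chi_c$ and $\chi_c$ is additive under decomposition into a closed set and its open complement. The sign bookkeeping above is routine.
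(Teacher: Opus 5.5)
Your proposal is correct and follows essentially the same route as the paper, which likewise applies Theorem \ref{thm:Fevola+Matsubara-Heo} and then uses inclusion--exclusion over the sets $V(f_k)\cap(\CC^*)^m$. You fill in the justification (additivity of $\chi=\chi_c$ on complex varieties) and the sign computation $-(-1)^{n+m}(-1)^{n+1}=(-1)^m$ that the paper leaves implicit.
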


Now  consider $f(x,z)$ such that the set of  its exponent vectors  is the vertex set of a product of $r+1$ simplices $\Delta_{d_1} \times \ldots \times \Delta_{d_r} \times \Delta_n$. The last $n$-dimensional simplex corresponds to the coordinates $z_1,\ldots, z_n$ in $f(x,z)$. 
%The equations $f_i(x)$ associated to this setup are multilinear of degree $r$.  We hope to exploit this structure and, 
Instead of considering the intersections $C_I$ in the torus $(\CC^*)^{d_1} \times \cdots \times (\CC^*)^{d_r}$, we would like to consider them in the product of projective spaces $\P^{d_1} \times \cdots \times \P^{d_r}$.  We denote the homogenous coordinates of the factor $\P^{d_i}$ by $x^{(i)}_0,\ldots,x^{(i)}_{d_i}$.  In these coordinates, the homogenized versions of $f_k(x)$ will be 
denoted by 
$$q_k(x_0^{(1)}, \ldots,x_{d_1}^{(1)}, \ldots,x_0^{(r)}, \ldots,x_{d_r}^{(r)} ). $$
Note that with these coordinates we can organize the coefficients of $f(x,z)$ into a tensor $W=(w_{i_1 \ldots i_r k})$ of format $ (d_1+1) \times \ldots \times (d_r+1)\times(n+1)$ where $w_{i_1 \ldots i_r k}$ is the coefficient of $x_{i_1}^{(1)} \cdots x_{i_r}^{(r)}  z_{k}$. 
Let $V_I$ denote now the intersection in $\P^{d_1} \times \ldots \times \P^{d_r}$ defined by 
\[ V_I = V( q_k \suchthat  k \in I). \] 
For a subset $ \mathcal{J} = J_1 \times \ldots  \times J_{r} \subseteq [\bar{d}_1] \times \ldots \times [\bar{d}_{r}]$ let us introduce the notation
\[ X_{\mathcal{J}} = \set{ x^{(i)}_j=0 \suchthat j \in J_i    } \subseteq \P^{d_1} \times \ldots \times \P^{d_{r}}. \] 
%Using the exclusion-inclusion once again one can easily reformulate Corollary \ref{cor: ML_via_intersections} in terms of $V_I$ and $X_\mathcal{J}$.
With this we get a formula for the ML degree of the scaled toric variety associated to a product of simplices as in Corollary \ref{cor:euler_characteristic_via_intersections}. 
\begin{theorem} \label{thm: ML_product}
Let $A$ be the vertex set of the product of simplices $\Delta_{d_1} \times \cdots \times \Delta_{d_r} \times \Delta_n$ and $W$ be a scaling considered as a tensor of format $(d_1+1, \ldots, d_r+1, n+1)$. Then 
\[ \mldeg(X_{A,W}) = (-1)^{\sum_{i=1}^{r} d_i} \sum_{ \varnothing \neq I \subseteq [\bar{n}]} (-1)^{\abs{I}} \left[  \sum_{\substack{\mathcal{J}: J_i \subsetneq  [\bar{d}_i]}} (-1)^{\abs{\mathcal{J}}}\chi \left( V_I \cap X_{\mathcal{J}} \right) \right],  \] 
where $\abs{\mathcal{J}} \coloneqq \abs{J_1} + \ldots + \abs{J_{r}}$. 
\end{theorem}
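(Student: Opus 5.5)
The plan is to start from Corollary \ref{cor:euler_characteristic_via_intersections}, with $m=\sum_{i=1}^r d_i$. This gives
\[ \mldeg(X_{A,W}) = (-1)^{m}\sum_{\varnothing\neq I\subseteq[\bar n]}(-1)^{|I|}\chi(C_I), \qquad C_I = V(f_k : k\in I)\subseteq (\CC^*)^{m}. \]
It therefore suffices to prove, for each fixed nonempty $I$, the identity
\[ \chi(C_I)=\sum_{\mathcal J:\,J_i\subsetneq[\bar d_i]}(-1)^{|\mathcal J|}\chi(V_I\cap X_{\mathcal J}). \]
Substituting this into the corollary gives the theorem directly; the sign $(-1)^m$ is exactly $(-1)^{\sum d_i}$.

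To prove the identity, I first identify $(\CC^*)^{m}$ with the open torus $T=(\CC^*)^{d_1}\times\cdots\times(\CC^*)^{d_r}$ inside $\P^{d_1}\times\cdots\times\P^{d_r}$. Here $T$ is the complement of the union of the coordinate hyperplanes $H^{(i)}_j=\{x^{(i)}_j=0\}$. Dehomogenizing at $x^{(i)}_0$ turns the multihomogeneous forms $q_k$ into the $f_k$, up to monomial factors that are units on $T$. Since each $q_k$ is multilinear, $V_I\cap T$ is exactly $C_I$.

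Next I apply inclusion--exclusion for the Euler characteristic with respect to the closed cover of $V_I\setminus T$ by the sets $V_I\cap H^{(i)}_j$. The Euler characteristic (with compact supports, which agrees with the ordinary one for complex algebraic varieties) is additive on locally closed decompositions. Hence
\[ \chi(V_I\cap T)=\sum_{\mathcal J}(-1)^{|\mathcal J|}\chi\Bigl(V_I\cap\bigcap_{i}\bigcap_{j\in J_i}H^{(i)}_j\Bigr), \]
where the sum runs over all $\mathcal J=J_1\times\cdots\times J_r$ with $J_i\subseteq[\bar d_i]$, including $\mathcal J=\varnothing$. The intersection inside the parentheses is precisely $V_I\cap X_{\mathcal J}$.

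Finally, if some $J_i=[\bar d_i]$, then all homogeneous coordinates of $\P^{d_i}$ vanish, so $X_{\mathcal J}=\varnothing$ and that term contributes zero. This reduces the sum to the range $J_i\subsetneq[\bar d_i]$, as in the statement.

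The step that needs the most care is the identification $V_I\cap T=C_I$ together with the bookkeeping of the multi-indexed inclusion--exclusion. One has to check that the $f_k$ are the dehomogenizations with no extra monomial factors, which holds because the support is the vertex set of the product of simplices. One also has to check that the inclusion--exclusion formula over the family of all coordinate hyperplanes is valid. The latter is standard: it follows from the additivity of $\chi_c$ and $\chi_c=\chi$ for complex varieties. I therefore expect no serious obstacle, only sign bookkeeping.
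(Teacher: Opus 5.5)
Your proposal is correct and follows the same route as the paper. Both arguments identify $C_I$ with $V_I$ intersected with the complement of the coordinate hyperplanes in $\PP^{d_1}\times\cdots\times\PP^{d_r}$, apply inclusion--exclusion for Euler characteristics, and substitute into Corollary \ref{cor:euler_characteristic_via_intersections}; you also make explicit two points the paper leaves implicit, namely the additivity of $\chi_c$ and the vanishing of the terms with some $J_i=[\bar d_i]$.
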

\begin{proof}
We start with the identity 
$C_I = V_I \cap \{ x_{j_i}^{(i)} \neq 0 \suchthat i=1,\ldots,r, \,\, j_i =0, \ldots, d_i \} $. The second term in the intersection is equal to 
$$ \PP^{d_1} \times \cdots \times \PP^{d_r} \, \backslash \, \bigcup_{i=1}^r \bigcup_{j=0}^{d_i} \{x_j^{(i)} = 0 \}. $$
Inclusion-exclusion combined with Corollary \ref{cor:euler_characteristic_via_intersections} gives the result.
\end{proof}
We note that with Theorem \ref{thm: ML_product} we managed to express the ML degree in terms $\chi(V_I \cap X_{\mathcal{J}})$ and these only depend on the subtensors of $W$ that have indices belonging to the set $ ([\bar{d}_1] \setminus J_1) \times \ldots \times ([\bar{d}_{r}] \setminus J_{r}) \times I$. This has a nice consequence for the product of two simplices; see also \cite{CHKO24}.

%Moreover, note that at no point so far we used that the scalings must have entries in $\mathbb{C}^*$, therefore, this can also be used to deduce the full Euler stratification. 

%When $r=1$ we can use Lemma \ref{lem: rank_nullity} to immediately obtain a very explicit expression for the ML degree.   Using the fact that $\sum_{k=0}^a (-1)^k \binom{a+1}{k} \cdot (a+1-k)$ is zero, and switching from summation over $J$ to its complement we reobtain the result of \cite{CHKO24} in a slightly new form. 
\begin{corollary} \label{cor: ML-product-of-two}
    The ML degree of a product of two simplices $\Delta_{m} \times \Delta_{n}$  with the scaling $W$ considered as an $(m+1)\times (n+1)$ matrix  is equal to 
\[  \mldeg( X_{\Delta_{m} \times \Delta_{n},W}) = \sum_{\substack{ \varnothing \neq  J \subseteq [\bar{m}]\\  \varnothing \neq  I \subseteq [\bar{n}]}} (-1)^{\abs{I} +\abs{J}} \mathrm{rank}(W_{IJ}),    \] 
where $W_{IJ}$ is the submatrix whose rows and columns are indexed by $I$ and $J$, respectively.  %Note that this alternating sum is exactly the beta invariant of the matroid formed by the extended matrix of $W$ considered in \cite{CHKO24}. 
\end{corollary}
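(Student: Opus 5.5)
We specialize Theorem \ref{thm: ML_product} to $r=1$ and $d_1=m$. Then $V_I$ is a linear subspace of $\PP^m$, namely the common zero set of the linear forms $q_k=\sum_{i} w_{ik}x_i$ for $k\in I$. The set $X_J$ is the coordinate subspace $\{x_j=0 : j\in J\}$ with $J\subsetneq[\bar m]$. Hence $V_I\cap X_J$ is the projectivization of the kernel of a linear map. In the coordinates $x_i$ with $i\notin J$, we restrict to $\CC^{[\bar m]\setminus J}$ and impose the linear forms indexed by $I$. The coefficient matrix of this system is the submatrix $W_{J^c I}$ (rows $J^c$, columns $I$), up to transposition. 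Since $\chi(\PP^{s-1})=s$ and the empty set has Euler characteristic $0$, we get
$$\chi(V_I\cap X_J)=\dim\ker = |J^c|-\mathrm{rank}(W_{J^cI}).$$
This formula holds uniformly, including the case where the kernel is zero.

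Substituting into Theorem \ref{thm: ML_product} gives
$$\mldeg = (-1)^m\sum_{\varnothing\ne I}(-1)^{|I|}\sum_{J\subsetneq[\bar m]}(-1)^{|J|}\bigl(|J^c|-\mathrm{rank}(W_{J^cI})\bigr).$$
We now reindex by $K=J^c$, which ranges over the nonempty subsets of $[\bar m]$. Since $|J|=m+1-|K|$, we have $(-1)^m(-1)^{|J|}=-(-1)^{|K|}$.

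The term $|K|$ does not depend on $I$. Its contribution is $\sum_{\varnothing\ne K}(-1)^{|K|}|K|$, which vanishes for $m\ge 1$. Indeed, $\sum_{k\ge0}(-1)^k\binom{m+1}{k}k=0$ whenever $m+1\ge 2$. In the degenerate case $m=0$, the $I$-sum $\sum_{\varnothing\ne I}(-1)^{|I|}=-1$ is nonzero, so the $|K|$-term does not cancel automatically. We handle it directly: then $X_{A,W}$ is $\PP^n$ scaled, its ML degree is $1$, and the formula reads $\sum_I(-1)^{|I|+1}\mathrm{rank}(W_{I,\{0\}})=\sum_{I}(-1)^{|I|+1}=1$, so it still holds.

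What remains is the term
$$-\,(-1)^{|K|}\cdot(-\mathrm{rank})=(-1)^{|K|}\mathrm{rank}(W_{KI}),$$
so the ML degree equals $\sum_{K,I}(-1)^{|I|+|K|}\mathrm{rank}(W_{KI})$. After renaming $K$ as $J$, this is the statement, with rows indexed by $J\subseteq[\bar m]$ and columns by $I\subseteq[\bar n]$. Rank is invariant under transposition, so the row/column convention does not matter.

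The main thing to get right is the sign bookkeeping in Theorem \ref{thm: ML_product}, together with checking that the Euler characteristic of a possibly empty projective linear space equals its kernel dimension. The latter is immediate once one notes that the nonvanishing conditions on the coordinates have already been absorbed into the inclusion–exclusion over $J$.
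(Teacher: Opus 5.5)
Your proof is correct and follows the paper's route. You specialize Theorem~\ref{thm: ML_product} to $r=1$, use $\chi(V_I\cap X_J)=|J^c|-\mathrm{rank}$ of the corresponding submatrix, kill the $|J^c|$-term with the binomial identity, and pass to the complement of $J$. Your separate check of $m=0$ is a small addition the paper omits: the identity it invokes equals $\sum_{k=0}^m(-1)^k\binom{m+1}{k}(m+1-k)=(m+1)(1-1)^m$, which vanishes only for $m\ge 1$.
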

\begin{proof}
 Theorem \ref{thm: ML_product}   gives us 
 \[ \mldeg(X_{\Delta_m \times \Delta_n,W}) = (-1)^m \sum_{ \varnothing \neq I \subseteq [\bar{n}]} (-1)^{\abs{I}} \left[  \sum_{\substack{J \subsetneq  [\bar{m}]}} (-1)^{\abs{J}}\chi \left( V_I \cap X_J\right) \right].  \] 
 The intersection $V_I \cap X_J$ is a linear subspace in $\PP^{m-|J|}$ defined by the vanishing of linear forms whose coefficients are the entries of $W_{IJ'}$, where $J'$ is the complement of $J$ in $[\bar{m}]$. The Euler characteristic of this linear subspace is precisely $m-|J|+1 - \mathrm{rank}(W_{IJ'})$. Now using the fact that
 $\sum_{k=0}^m (-1)^k \binom{m+1}{k} \cdot (m+1-k)$ is zero and swapping $J$ and $J'$
 we arrive at the formula. 
\end{proof}

\subsection{Scaled $\PP^1\times\PP^1\times\PP^n$}
Now we specialize back to our main focus of the paper, namely three-way independence models
based on $\Delta_1 \times \Delta_1 \times \Delta_n$.
 Our goal is to prove Theorem \ref{thm:main}, and in order to do so, we are going to understand how the terms in Theorem \ref{thm: ML_product} behave with respect to the pattern of vanishing  of the factors of the principal $A$-determinant $E_A$. Theorem \ref{thm:main} is the combination of Corollary \ref{cor: part 1} and Corollary \ref{cor: main23} below.
Along the way, we will give explicit expressions for many terms in the correspoonding formula of Theorem \ref{thm: ML_product}. 

 We go back to the notation from earlier sections where  
 $x_i^{(1)}=x_i$ and $x_i^{(2)}=y_i$, and let   
$$q_k(x_0,x_1,y_0,y_1) = w_{00k}x_0y_0 + w_{01k}x_0 y_1 + w_{10k}x_1y_0  + w_{11k} x_1y_1. $$
%("q" stands for quadratic, since our equations are now bilinear). 
 
% Let us introduce the following notation
% \[ V_I \coloneqq\set{ (x,y) \in \P^1 \times \P^1 \suchthat q_i=0 \quad \forall ~  i \in I }, \]
% \[ V_{x_i} \coloneqq \set{ (x,y) \in \P^1 \times \P^1 \suchthat x_i=0 }, \]
% \[ V_{y_j} \coloneqq \set{ (x,y) \in \P^1 \times \P^1 \suchthat y_j=0 }. \]
% Using inclusion-exclusion we can write the Euler characteristic of the hypersurface arrangement $H_W$ as 
% \begin{align*}  
%  &\chi(H_W) = \sum_{i=0}^1 \chi( V_{x_i})) +\sum_{j=0}^1 \chi( V_{y_j})  - \sum_{i,j=0}^1 \chi( V_{x_i}\cap V_{y_j})  + \\&+\sum_{\varnothing\neq I \subseteq [n]} (-1)^{\abs{I}}  \left(-\chi(V_I) + \sum_{i=0}^1 \chi(V_I \cap V_{x_i}) + \sum_{j=0}^1 \chi(V_I \cap V_{y_j}) - \sum_{i,j=0}^1 \chi( V_I \cap V_{x_i} \cap V_{y_j}) \right).   
%  \end{align*}
\noindent
The terms  in Theorem \ref{thm: ML_product} for our case  can be subdivided into two types:  
$$ \mathrm{(i)}  \,\,\chi(V_I \cap X_{\mathcal{J}}) \mbox{ with } 1\leq \abs{\mathcal{J}}\leq 2 \quad \mbox{and} \quad \mathrm{(ii)} \, \, \chi(V_I).$$
%\begin{enumerate}[(i)]
%    \item $\chi(V_I \cap X_{\mathcal{J}})$ with $1\leq \abs{\mathcal{J}}\leq 2$;
    %\item $\chi(V_I \cap V_{x_i} \cap V_{y_j})$;
%    \item $\chi(V_I)$.
%\end{enumerate}
\begin{lemma}\label{lem: easy Euler char}
The Euler characteristic $\chi(V_I \cap X_{\mathcal{J}})$ with $1\leq \abs{\mathcal{J}}\leq 2$ depends only on which $2$-minors 
of $W$ vanish.  
\end{lemma}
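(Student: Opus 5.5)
We prove Lemma \ref{lem: easy Euler char} by direct case analysis on the shape of $\mathcal{J} = J_1 \times J_2$ with $J_1, J_2 \subsetneq [\bar 1]$ and $1 \le |\mathcal{J}| \le 2$. In each case, intersecting with $X_{\mathcal{J}}$ turns the bilinear forms $q_k$, $k \in I$, into linear forms on a projective line or into constants at a point. We then compute $\chi(V_I \cap X_{\mathcal{J}})$ as in the proof of Corollary \ref{cor: ML-product-of-two}, via the rank of a coefficient matrix. By symmetry between the $x$- and $y$-factors and between the indices $0,1$, only two cases are needed.

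\emph{Case $|\mathcal{J}|=2$.} Here $J_1=\{i\}$ and $J_2=\{j\}$, so $X_{\mathcal{J}}$ is the single point with $x_i=y_j=0$. Write $i'=1-i$ and $j'=1-j$. At this point $q_k$ equals $w_{i'j'k}x_{i'}y_{j'}$ with $x_{i'}y_{j'}\neq 0$. Since all $w_{i'j'k}\in\CC^*$, the point lies in none of the $V(q_k)$. Hence $\chi(V_I\cap X_{\mathcal{J}})=0$ for every $I$, independently of $W$.

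\emph{Case $|\mathcal{J}|=1$.} Say $J_1=\{i\}$ and $J_2=\varnothing$; the case $J_1=\varnothing$, $J_2=\{j\}$ is the same with the roles of the two factors swapped. Then $X_{\mathcal{J}}\cong \PP^1$ with coordinates $[y_0:y_1]$ and $x_{i'}=1$. Each $q_k$ restricts to the linear form $w_{i'0k}y_0+w_{i'1k}y_1$, whose coefficient vector is nonzero. So $V_I\cap X_{\mathcal{J}}$ is the linear subspace of $\PP^1$ cut out by the rows $(w_{i'0k},w_{i'1k})$, $k\in I$, which are rows of the face $W_{i'\bullet\bullet}$. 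Its Euler characteristic is $2-\mathrm{rank}$ of this $|I|\times 2$ matrix. Because every row is nonzero, the rank is $1$ exactly when all $2\times 2$ minors $F_{i'\bullet(k_1,k_2)}$ with $k_1<k_2$ in $I$ vanish, and is $2$ otherwise. When $|I|=1$ the rank is always $1$. Thus $\chi\in\{0,1\}$ is determined by which $2$-minors of the face $W_{i'\bullet\bullet}$ vanish. For $J_2=\{j\}$ the relevant minors are the $F_{\bullet j'(k_1,k_2)}$.

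The only point requiring care, and the nearest thing to an obstacle, is using the hypothesis $W\in(\CC^*)^{4n+4}$. It guarantees that the restricted linear forms never vanish identically and that the corner point is never on any $V(q_k)$. Without it, the rank would also depend on the vanishing of individual entries rather than only on $2$-minors. The slice minors $F_{\bullet\bullet k}$ and the hyperdeterminants do not enter at all. With this observed, the lemma follows immediately from the two cases.
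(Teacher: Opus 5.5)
Your proposal is correct and follows essentially the same route as the paper. The paper also splits into $|\mathcal{J}|=2$, where the corner point avoids every $V(q_k)$ because all entries of $W$ lie in $\CC^*$, and $|\mathcal{J}|=1$, where $\chi = 2-\mathrm{rank}$ of the $|I|\times 2$ face submatrix, whose rank is governed by the vanishing of the corresponding $2$-minors.
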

\begin{proof}
 If $\abs{\mathcal{J}}=2$, then $X_\mathcal{J}$ is just a point in $\P^1 \times \P^1$. 
 Without loss of generality, it is given by $x_0 = y_0 = 0$. Substituting this into $q_k$ for $k\in I$, we see that $V_I \cap X_{\mathcal{J}} =  \emptyset$ since all coefficients in $W$ are in $\CC^*$. This shows that the Euler characteristic is $0$.
 %For $I=\{k_1,\ldots,k_{\abs{I}}\}$ we apply Lemma \ref{lem: rank_nullity} to a hyperplane arrangement in dimension $0$ to get
%\[ \chi\left(V_I \cap \{x_{1-i}=0\} \cap \{ y_{1-j}=0\}\right)  = 1 - \rank  \begin{pmatrix}
%    w_{ij k_1} \\
%    \vdots \\
%    w_{ij k_{\abs{I}}}
%\end{pmatrix}. \] 
%In particular, if we assume that the scaling entries lie in $\CC^*$, this is always zero. 

\noindent
If $\abs{\mathcal{J}}=1$, then $X_{\mathcal{J}} = \mathrm{pt} \times \PP^1$ or $\PP^1 \times \mathrm{pt}$.
Therefore, we are looking at the intersection of $\abs{I}$ linear hyperplanes in $\P^1$. For example, for $\mathcal{J}=\{1-i\} \times \varnothing$, as in the proof of Corollary \ref{cor: ML-product-of-two} we get 
% Dealing with the first two types is rather straightforward.  Let us start with the case (i). %Set $\tilde{i}\coloneqq 1-i$ for $i \in \{0,1\}$,
% The variety $V_I \cap V_{x_{1-i}}$ is defined by the equations $x_{1-i}=0$ and 
% \[ x_{i} \left( w_{i0k} y_0 + w_{i1k} y_1 \right)=0 \quad \text{for }k\in I. \] 
% Thus, as discussed in the previous section, the Euler characteristic of this for $I=\{k_1,\ldots, k_{\abs{I}}\}$ \vadym{maybe take wlog $I=\{0,\ldots,s\}$ for clarity}  is 
\[ \chi(V_I \cap  \{ x_{1-i}=0\}) = 2 - \mathrm{rank} \begin{pmatrix}
    w_{i0 k_1} & w_{i1 k_1} \\ 
    \vdots & \vdots  \\
     w_{i0 k_{\abs{I}}} & w_{i1 k_{\abs{I}}}   \\ 
\end{pmatrix}.  \]
The matrix above is a $ \abs{I}\times 2$ matrix lying on the face $W_{i\bullet \bullet}$. 
Similarly for $V_I \cap \{ y_{1-j}=0\}$ one gets a $ \abs{I} \times 2 $ matrix on the face $W_{\bullet j \bullet}$. Since we assume that $W \in (\C^*)^{4n+4}$, the vanishing pattern of the $2$-minors completely governs the ranks of the above matrices.  %Indeed, for an $m\times2$ matrix with non-zero entries, all its $2$-minors vanish if and only if $m-1$ of its $2$-minors vanish.  Thus,  we only have to look at the minors $F_{i \bullet (\alpha,\beta)}$  and $F_{ \bullet j(\alpha,\beta)}$ for $\alpha,\beta \in I$.
\end{proof}

The remainder of this section is devoted to dealing with $\chi(V_I)$. 
With $I= \{k_1, \ldots, k_{\abs{I}} \}$,
%To simplify the notation, we focus on $V(q_0, \ldots, q_s)$ with $s=\abs{I}-1$; every other combination follows in a similar manner. 
the system 
$q_{k_1}=\ldots = q_{k_{|I|}} =0$
%\begin{align*} \label{matrix_system}
%    0 = q_{k_1} &= w_{00k_1}x_0y_0 + w_{01k_1}x_0y_1 + w_{10k_1}x_1 y_0 + w_{11k_1} x_1 y_1 \\
%    & \vdots \\
%    0 = q_{k_{\abs{I}}} &= w_{00 k_{\abs{I}}}x_0y_0 + w_{01k_{\abs{I}}}x_0y_1 + w_{10k_{\abs{I}}}x_1 y_0 + w_{11k_{\abs{I}}} x_1y_1 
%\end{align*}
can be written as 
\begin{align*}
0 = T_{I}(y) \cdot x := 
\begin{pmatrix}
    w_{00k_1}y_0 + w_{01k_1}y_1 & w_{10k_1}y_0 + w_{11k_1}y_1 \\
    \vdots & \vdots \\
    w_{00k_{\abs{I}}}y_0 + w_{01k_{\abs{I}}}y_1 & w_{10k_{\abs{I}}}y_0 + w_{11k_{\abs{I}}}y_1
\end{pmatrix}
\cdot \begin{pmatrix}
    x_0 \\ x_1
\end{pmatrix}.
\end{align*}
%We are going to abuse notation and use $T_I(y)=T_{k_1\ldots k_{\abs{I}}}(y)$ whenever suitable. 
Note that for a fixed $y \in \PP^1$, this system admits a solution $x \in \PP^1$ if and only if $\rank T_I(y) <2$. 

\subsubsection*{\underline{The cases with $1\leq \abs{I} \leq 2$}}

We start with the case $\abs{I}=1$.
\begin{lemma}\label{lem: one quadric}
For $W \in (\mathbb C^*)^{4n+4}$, the Euler characteristic $\chi(V_{\{k\}})$ depends only on whether $F_{\bullet \bullet k}$ vanishes. More precisely,
$ \chi (V_{\{k\}})  = 4 - \rank W_{\bullet \bullet k}$. 
\end{lemma}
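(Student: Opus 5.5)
We regard $V_{\{k\}}$ as the $(1,1)$-curve in $\PP^1\times\PP^1$ cut out by the bilinear form $q_k(x,y)=x^T W_{\bullet\bullet k}\, y$, with the convention that $x=(x_0,x_1)$ indexes rows and $y=(y_0,y_1)$ indexes columns of $W_{\bullet\bullet k}$. Since all entries of $W$ lie in $\CC^*$, the matrix $W_{\bullet\bullet k}$ is nonzero, so its rank is $1$ or $2$. The rank is $1$ exactly when $F_{\bullet\bullet k}=0$. We therefore split into these two cases and compute $\chi$ directly, using the projection to the $y$-factor as in the matrix formulation $T_I(y)\cdot x=0$ with $I=\{k\}$.

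\emph{Rank $2$.} For each $y\in\PP^1$, the row vector $T_{\{k\}}(y)=(w_{00k}y_0+w_{01k}y_1,\; w_{10k}y_0+w_{11k}y_1)$ equals $W_{\bullet\bullet k}\,y$ written as a row. Since $W_{\bullet\bullet k}$ is invertible, this vector never vanishes. Hence its rank is exactly $1$, and there is a unique $x\in\PP^1$ solving $q_k=0$. The projection $V_{\{k\}}\to\PP^1$ is then a bijective morphism. Indeed, $V_{\{k\}}$ is the graph of the automorphism $y\mapsto [-(w_{10k}y_0+w_{11k}y_1):\, w_{00k}y_0+w_{01k}y_1]$, so $V_{\{k\}}\cong\PP^1$ and $\chi=2=4-2$.

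\emph{Rank $1$.} Write $W_{\bullet\bullet k}=ab^T$ with $a,b\in(\CC^*)^2$; every entry of $a$ and $b$ is nonzero because every entry of $W$ is. Then $q_k=(a_0x_0+a_1x_1)(b_0y_0+b_1y_1)$. So $V_{\{k\}}$ is the union of $\{pt\}\times\PP^1$ and $\PP^1\times\{pt\}$, which meet in a single point. By inclusion–exclusion, $\chi=2+2-1=3=4-1$.

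Together the two cases give $\chi(V_{\{k\}})=4-\rank W_{\bullet\bullet k}$, and this depends only on whether $F_{\bullet\bullet k}$ vanishes. No step is a real obstacle. The one point to state carefully is that the rank-$2$ fibration has no degenerate fibres: $T_{\{k\}}(y)$ never becomes the zero row, which would otherwise contribute a whole $\PP^1$ fibre.
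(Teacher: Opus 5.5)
Your proof is correct and follows the paper's route. Like the paper, you project $V_{\{k\}}$ to the $y$-factor via the row $T_{\{k\}}(y)$, note that this row vanishes at some $y$ exactly when $F_{\bullet\bullet k}=0$, and read off $\chi=2$ (a copy of $\PP^1$) or $\chi=3$ (two lines meeting in a point). Your factorization $q_k=(a_0x_0+a_1x_1)(b_0y_0+b_1y_1)$ with inclusion--exclusion, and your identification of the rank-$2$ curve as the graph of an automorphism, are just more precise versions of the paper's decompositions $((\PP^1\setminus \mathrm{pt})\times\mathrm{pt})\sqcup(\mathrm{pt}\times\PP^1)$ and ``$\PP^1\times\mathrm{pt}$''.
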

\begin{proof}
In this situation the rank of $T_k(y)$ is always at most $1$, but there might exist a point $y$ where the rank drops to $0$. This happens if and only if $w_{00k} y_0 + w_{01k} y_1 = \lambda ( w_{10k} y_0 + w_{11k} y_1)$ for $\lambda \in \CC^*$. In turn, this happens if and only if $\det W_{\bullet \bullet k} = F_{\bullet \bullet k}=0$.  When $\rank W_{\bullet \bullet k} = 2$, $V_{\{k\}}$ is of the form
$\PP^1 \times \mathrm{pt}$, and when $\rank W_{\bullet \bullet k} = 1$, $V_{\{k\}}$ is 
of the form 
$((\PP^1 \setminus \mathrm{pt} )\times \mathrm{pt}) \sqcup (\mathrm{pt} \times \PP^1)$. In both cases, $\chi(V_{\{k\}}) = 4 - \rank W_{\bullet \bullet k}$. 
%Thus, since rank of $W_{\bullet \bullet k}$ is never $0$ and using Lemma \ref{lem: rank_nullity}, we can write  
%\[ \chi (V_{\{k\}})  = ( \chi (\mathbb P^1) - (2- \rank W_{\bullet \bullet k})) \cdot \chi( \pt) + (2- \rank W_{\bullet \bullet k}) \cdot \chi( \P^1) = 4 - \rank W_{\bullet \bullet k}.  \] 
\end{proof}

For the case  $\abs{I}=2$, we let $I = \{i,j\}$. We can still express the contribution $\chi(V_I)$ explicitly, but it is a little bit more involved. For a solution in $\PP^1 \times \PP^1$ to $T_{ij}(y) \cdot x =0 $ to exist, $\det T_{ij}(y)$ must vanish. 

\begin{lemma}
    The determinant of  $T_{ij}(y)$ is 
    \[     \det T_{ij}(y) = F_{\bullet 0 (i,j)} y_0^2 
    + (w_{00i}w_{11j} + w_{01i}w_{10j}-w_{10i}w_{01j}-w_{11i}w_{00j})y_0y_1 + F_{\bullet 1 (i,j)} y_1^2,  \]
and the discriminant of this quadric is exactly the hyperdeterminant $H_{ij}$.  
\end{lemma}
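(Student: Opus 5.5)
The plan is a direct computation in two parts: first expand $\det T_{ij}(y)$ as a binary quadratic form in $(y_0,y_1)$, then compare its discriminant $b^2-4ac$ with the formula for $H_{ij}$ from the definition.

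\textbf{Step 1: expand the determinant.} Write
\[
\det T_{ij}(y) = (w_{00i}y_0 + w_{01i}y_1)(w_{10j}y_0 + w_{11j}y_1) - (w_{10i}y_0 + w_{11i}y_1)(w_{00j}y_0 + w_{01j}y_1)
\]
and collect terms by monomial.
\begin{itemize}
\item The coefficient of $y_0^2$ is $a = w_{00i}w_{10j} - w_{10i}w_{00j}$. This is $\det W_{\bullet 0 (i,j)} = F_{\bullet 0 (i,j)}$ by the convention $W_{\bullet j (k_1,k_2)} = \begin{pmatrix} w_{0jk_1} & w_{1jk_1} \\ w_{0jk_2} & w_{1jk_2}\end{pmatrix}$.
\item The coefficient of $y_1^2$ is $c = w_{01i}w_{11j} - w_{11i}w_{01j} = F_{\bullet 1 (i,j)}$.
\item The mixed coefficient is $b = w_{00i}w_{11j} + w_{01i}w_{10j} - w_{10i}w_{01j} - w_{11i}w_{00j}$, which is the one claimed in the statement.
\end{itemize}
This step is routine bookkeeping.

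\textbf{Step 2: compare the discriminant with $H_{ij}$.} The discriminant is $b^2 - 4\,F_{\bullet 0 (i,j)}F_{\bullet 1(i,j)}$. The defining formula for $H_{ij}$ instead has the shape $[\cdots]^2 - 4 \det(W_{0\bullet(i,j)})\det(W_{1\bullet(i,j)})$. It uses the minors on the faces $W_{0\bullet\bullet}, W_{1\bullet\bullet}$ rather than $W_{\bullet 0\bullet}, W_{\bullet 1 \bullet}$, and its squared term carries a different sign on the pair $w_{01}w_{10}$. So I will not match the expressions term by term. Instead I will use the invariance of the $2\times2\times2$ hyperdeterminant under permuting the three tensor axes, in particular under transposing the first two indices, $w_{abk} \mapsto w_{bak}$. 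This invariance is a standard property of Cayley's hyperdeterminant (\cite[Chapter 14]{GKZ94}).

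Under this transposition:
\begin{itemize}
\item $\det W_{a\bullet(i,j)}$ becomes $\det W_{\bullet a (i,j)}$, so $4\,F_{0\bullet(i,j)}F_{1\bullet(i,j)}$ becomes $4\,F_{\bullet 0(i,j)}F_{\bullet 1(i,j)}$.
\item The bracket $(w_{00i}w_{11j}-w_{00j}w_{11i})-(w_{01i}w_{10j}-w_{01j}w_{10i})$ becomes $(w_{00i}w_{11j}-w_{00j}w_{11i})-(w_{10i}w_{01j}-w_{10j}w_{01i})$, which is exactly $b$.
\end{itemize}
Hence the transposed formula for $H_{ij}$ is literally $b^2-4ac$, and the invariance gives $H_{ij} = b^2 - 4ac$.

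\textbf{Main obstacle.} The only delicate point is justifying the invariance rather than assuming it. If citing \cite{GKZ94} is not acceptable, I would check $b^2-4ac = H_{ij}$ by brute force. Both sides are degree-four polynomials in the eight entries of $W^{ij}$, and $H_{ij}$ has $12$ terms. Expanding both sides and comparing monomials is finite, and a computer algebra check would also suffice. Conceptually, both sides vanish exactly when the quadric $q_i=q_j=0$ pencil is degenerate, i.e.\ when the binary form $\det T_{ij}$ has a double root. Since both sides are irreducible of the same degree, they agree up to a scalar, and comparing one coefficient (e.g.\ of $w_{00i}^2w_{11j}^2$, which is $1$ on both sides) fixes that scalar.
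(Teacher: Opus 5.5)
Your proof is correct. The paper gives no proof of this lemma and treats it as a routine computation. Your Step 1 is exactly that computation, and your bookkeeping of the three coefficients is right.

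Your Step 2 is a genuinely different route from a plain expansion. You observe that the paper's displayed formula for $H_{ij}$ is the discriminant of the $x$-slicing $\det(x_0 W_{0\bullet(i,j)} + x_1 W_{1\bullet(i,j)})$. By contrast, $\det T_{ij}(y) = \det(y_0 W_{\bullet 0(i,j)} + y_1 W_{\bullet 1(i,j)})$ is the $y$-slicing. You then pass from one to the other via $w_{abk}\mapsto w_{bak}$, and you tracked correctly how the minors and the bracket transform. This explains why two different-looking quartics agree. The cost is that all the content sits in the cited axis-permutation symmetry of Cayley's hyperdeterminant, together with the identification of the explicit slicing formula with it. In practice one sees that symmetry either from the dual-variety definition or from the manifestly symmetric $12$-term expansion, which is essentially your brute-force fallback.

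A shorter, citation-free argument is available. Set $P = w_{00i}w_{11j}-w_{00j}w_{11i}$ and $R = w_{01i}w_{10j}-w_{01j}w_{10i}$. Your discriminant is $(P+R)^2 - 4F_{\bullet 0(i,j)}F_{\bullet 1(i,j)}$, while $H_{ij} = (P-R)^2 - 4F_{0\bullet(i,j)}F_{1\bullet(i,j)}$. Their difference is $4\bigl(PR - F_{\bullet 0(i,j)}F_{\bullet 1(i,j)} + F_{0\bullet(i,j)}F_{1\bullet(i,j)}\bigr)$. This is $4$ times the three-term Pl\"ucker relation among the $2$-minors of the $2\times 4$ flattening $W^{(3)}_{ij}$, so it vanishes.

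Finally, do not rely on your ``conceptual'' remark as written. The claim that $H_{ij}$ vanishes exactly when $\det T_{ij}(y)$ has a double root is, up to a scalar, the statement you are proving. It would need an independent argument first, for instance through singular points of the trilinear form, which is symmetric in the three axes. Irreducibility of $b^2-4ac$ would also need justification.
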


The set $\{ y \in \P^1 \suchthat \det T_{ij}(y) =0 \}$ can consist of  $1$ or $2$ points or be the whole $\P^1$.  However, to understand the geometry of $V_I$, it is not enough to just know the points where  $\det T_{ij}(y)=0$, we also need to know the rank of $T_{ij}(y)$ at those points. The matrix $T_{ij}(y)$ can have rank $0$ at some point $y$ only if it factorizes as 
 \begin{equation} \label{eq: Ty_factorization} T_{ij}(y) = (a_0 y_0 + a_1 y_1) \cdot M, \end{equation} 
with $a_i \in \mathbb{C}$ and $M$ a constant $2\times2$ matrix.  We note that in this case the corresponding hyperdeterminant $H_{ij}$ vanishes.

If $T_{ij}(y)$ does not factorize as above, then there are a few possible scenarios. 
If $H_{ij}$ does not vanish, then 
 $\det T_{ij}(y)=0$ at exactly two points. Otherwise, when $H_{ij}$ does vanish, we can have either one point at which $ \rank T_{ij}(y)=1$, since $H_{ij}$ is the discriminant of the quadratic equation $\det T_{ij}(y)=0$, or $\rank T_{ij}(y)=1$ for any point $y$, in which case all the coefficients of the equation $\det T_{ij}(y)=0$ are identically zero. 
 
 If the rank of $T_{ij}(y)$ is $1$ at some $y$, we find exactly one $x \in \P^1$ to satisfy $T_{ij}(y) \cdot x =0$. If the rank of $T_{ij}(y)$ is $0$ at some $y$, all $x \in \P^1$ satisfy $T_{ij}(y) \cdot x =0$. 
 We summarize this discussion as follows.

%\textcolor{purple}{Elke: I understand your idea with putting all of the general lemmas into the proof to have the easy case first and then refer back to it, but it might be hard to understand where the proof ends? We could do like: We summarize a general version of this discussion in the following lemma. It, together with Lemma \ref{lem: type characterization} finalizes the proof. (and then end the proof) ? (ok with the statement, the proof needs a bit more probably...)}
\begin{lemma} \label{lem: types_of_Ty}
The matrix $T_{ij}(y)$ falls into one of the following five types.
\begin{enumerate}[I:]
    \item \textbf{$T_{ij}(y)$ has rank 1 at two points and never rank 0}. \\
    This happens if the hyperdeterminant $H_{ij}$ does not vanish.
    Then $T_{ij}(y) \cdot x =0$ has two solutions $\pt \times \pt \,\sqcup \, \pt \times \pt$ and $\chi(V_I)=2$.
    
    \item \textbf{$T_{ij}(y)$ has rank 0 at one point only and rank $< 2$ nowhere else}. \\
    This happens if $T_{ij}(y) = (a_0 y_0+ a_1 y_1)M$ with $\rank M = 2$.
    Then $T_{ij}(y) \cdot x =0$ has as solution $\P^1 \times \pt$ and $\chi(V_I)=2$.
    \item \textbf{$T_{ij}(y)$ has rank 0 at one point only and rank 1 everywhere else}. \\
    This happens if $T_{ij}(y) = (a_0 y_0+a_1y_1)M$ with $\rank M =1$. 
    Then the set of solutions to $T_{ij}(y) \cdot x=0$ has the form 
    $(\PP^1 \times \pt) \sqcup \pt \times (\P^1 \setminus \pt)$ and $\chi(V_I)=3$.
    \item \textbf{$T_{ij}(y)$ has always rank 1 and never rank 0}. \\
    This happens if $\det T_{ij}(y)$ is identically zero. Then the solution set to $T_{ij}(y) \cdot x =0$ is isomorphic to $\P^1$ and $\chi(V_I)=2$. %There are two subcases:
    %\begin{itemize}
     %   \item The rows of $(T_y)_{ij}$ are proportional by a number and the columns are proportional by a rational function. This means that none of the rows can have a factor, since otherwise the other row would also have that factor, which would lead to case (3). 
     %   \item The columns of $(T_y)_{ij}$ are proportional by a number and the columns are proportional by a rational function. This means that both rows have a (distinct) factor.
    %\end{itemize}
    \item \textbf{$T_{ij}(y)$ has rank 1 at one point only}. \\
    This happens if $\det T_{ij}(y)$ is not identically zero but the hyperdeterminant $H_{ij}$ vanishes. Then $T_{ij}(y) \cdot x =0$ has one solution $\pt \times \pt$ and $\chi(V_I)=1$.
\end{enumerate}
\end{lemma}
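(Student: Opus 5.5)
The plan is to classify $T_{ij}(y)$ by the binary quadratic form $D(y)=\det T_{ij}(y)$ and by the rank of $T_{ij}(y)$ at the zeros of $D$. Then, for each type, I describe the fiber of the projection $V_I\to\PP^1$, $(x,y)\mapsto y$, and add up Euler characteristics. By the lemma just proved, $D$ is a binary quadratic whose discriminant is $H_{ij}$. The fiber over $y$ is empty if $\rank T_{ij}(y)=2$, a single point if the rank is $1$, and all of $\PP^1$ if the rank is $0$. So $\chi(V_I)$ equals the number of points (or the Euler characteristic of the locus) where the rank is $1$, plus $2$ times the number of points where the rank is $0$. This uses the additivity of $\chi$ over a constructible decomposition, and multiplicativity for fibrations that are trivial over each stratum.

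First I treat rank-$0$ points. $T_{ij}(y)$ is a $2\times 2$ matrix of linear forms in $y$. If it vanishes at some $y^*=[b_0:b_1]$, then every entry is a multiple of the linear form $\ell$ vanishing at $y^*$. This gives the factorization \eqref{eq: Ty_factorization} with $M\neq 0$, so $\rank M\in\{1,2\}$, and there is at most one rank-$0$ point.

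If $\rank M=2$, then $D=\ell^2\det M$, so rank $<2$ occurs only at $y^*$. The solution set is $\PP^1\times \mathrm{pt}$ and $\chi=2$ (Type II).

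If $\rank M=1$, then $D\equiv 0$. The rank is $1$ on $\PP^1\setminus y^*$, where the kernel is the fixed line $\ker M$. So $V_I=(\PP^1\times\mathrm{pt})\sqcup(\mathrm{pt}\times(\PP^1\setminus\mathrm{pt}))$ with $\chi=2+1=3$ (Type III). Here I should check that this set is really the union of two lines, but the Euler characteristic count does not need that.

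Next, with no rank-$0$ point, the rank is everywhere at least $1$, and I split according to $D$.

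If $D\equiv 0$, the rank is $1$ everywhere and the kernel map $y\mapsto\ker T_{ij}(y)$ is a morphism $\PP^1\to\PP^1$. So $V_I$ is its graph, isomorphic to $\PP^1$, with $\chi=2$ (Type IV).

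If $D\not\equiv 0$ and $H_{ij}\neq 0$, then $D$ has two distinct roots with rank $1$ at each, giving two points and $\chi=2$ (Type I).

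If $D\not\equiv 0$ and $H_{ij}=0$, then $D$ has a double root. This gives one point and $\chi=1$ (Type V).

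I also need to observe that $H_{ij}\neq0$ forces $D\not\equiv 0$ and no factorization, because both of those cases give $H_{ij}=0$. This makes the case descriptions consistent with the conditions stated in the lemma.

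The main subtle step is Type IV, namely justifying that $V_I\cong\PP^1$ when $D\equiv 0$ without rank drops. Every entry might share no common factor, yet the kernel must be extended algebraically across all $y$. I would argue that a nonzero row (or column) of $T_{ij}(y)$ gives the kernel locally, and that these local descriptions glue because the rank is constantly $1$. Alternatively, the fiber count alone gives $\chi=\chi(\PP^1)\cdot 1=2$ by constructibility, which suffices for the lemma.
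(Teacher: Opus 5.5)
Your proposal is correct and follows essentially the same route as the paper: rank-$0$ points are detected through the factorization $T_{ij}(y)=(a_0y_0+a_1y_1)M$ and split by $\mathrm{rank}\, M$, and for non-factoring $T_{ij}(y)$ one uses that $\det T_{ij}(y)$ is a binary quadratic with discriminant $H_{ij}$, reading off $\chi(V_I)$ from the fibers of $V_I\to\PP^1_y$. Branching on rank-$0$ points first gives types IV and V their intended meaning, which the paper conveys by ``if $T_{ij}(y)$ does not factorize''; your gluing argument for $V_I\cong\PP^1$ in type IV is more careful than the paper, which simply asserts it.
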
  
We want to understand what kind of coefficient tensor $W$ corresponds to each of the above types. 
For a $3$-way tensor $W = (w_{ijk}) \in \CC^p \times \CC^q \times \CC^r$ and $s=1,2,3$,
we let $W^{(s)}$ be the flattening matrix along mode $s$. For instance, $W^{(1)}$ is the $p \times (qr)$ matrix 
where the entry in row $i$
and column indexed by $(j,k)$ is  $w_{ijk}$. In particular for the $2\times2\times2$ tensor $ W_{\bullet \bullet(i,j)}$ we denote the corresponding flattenings by $W^{(s)}_{ij}$.
%and we define  $r_{s,ij} \coloneqq \rank W^{(s)}_{ij}$. 

%\begin{definition}
%    Let  $W=(w_{ijk})$ be a tensor. A mode-$m$ fibre of $W$ is a vector obtained by fixing all indices of $W$ except the $m$-th one. The mode-$m$ flattening $L^{(m)}$ of $W$ is the matrix whose columns are the $m$-modes of $W$.  In particular for the $2\times2\times2$ tensor $ W_{\bullet \bullet(i,j)}$ we denote the corresponding flattening by $L^{(m)}_{ij}$ and we define
%     \[ r_{m,ij} \coloneqq \rank L^{(m)}_{ij}. \] 
%     \[ L^{(m)} = \left(\begin{array}{c|c}
%      w_{0 \bullet \bullet} & w_{1 \bullet \bullet}
% \end{array} \right)  \] 
%\end{definition}

\begin{lemma}\label{lem: type characterization}
    Let $W \in (\C^*)^{4n+4}$. The type of the matrix $T_{ij}(y)$ depends only on the vanishing of the $2$-minors of $W_{\bullet \bullet (i,j)}$ and its $2 \times 2 \times 2$ hyperdeterminant $H_{ij}$.  The matrix $T_{ij}(y)$ is of type 
    \begin{enumerate}[I:]
    \item if $H_{ij} \neq 0$,
    \item if $H_{ij}=0$ and $F_{\bullet \bullet i}=F_{\bullet \bullet j}=F_{ 0 \bullet (i,j)}=F_{ 1 \bullet  (i,j)}=0$ and 
    $F_{\bullet 0 (i,j)} \neq 0$, $F_{\bullet 1 (i,j)} \neq 0$,
    \item if $H_{ij} = 0$ and all $2$-minors of $W_{\bullet \bullet (i,j)}$ vanish,
    \item if 
    $H_{ij} = 0$ and \\
    \qquad either  $F_{\bullet \bullet i} = F_{\bullet \bullet j} = F_{\bullet 0 (i,j)} = F_{\bullet 1 (i,j)} =0$ and 
    $F_{0 \bullet (i,j)} \neq 0$, $F_{1 \bullet (i,j)} \neq 0$ \\
    or $F_{0 \bullet (i,j)} = F_{1 \bullet (i,j)} = F_{\bullet 0 (i,j)} = F_{\bullet 1 (i,j)} =0$ and 
    $F_{\bullet \bullet i} \neq 0$, $F_{\bullet \bullet j} \neq 0$,
    \item if $H_{ij} =0$ and only one of the $2$-minors of $W_{\bullet \bullet (i,j)}$ vanishes.
\end{enumerate}
    
    \begin{proof}
    As already mentioned, if the hyperdeterminant $H_{ij} \neq 0$  then
    $T_{ij}(y)$ is of type I.  Assume from now on that $H_{ij} = 0$.

The matrix $T_{ij}(y)$ factors if and only if  the tuple $(w_{00i} ~ w_{00j} ~ w_{10i} ~ w_{10j})$ is proportional to the tuple $(w_{01i} ~ w_{01j} ~ w_{11i} ~ w_{11j})$ or in other words the matrix 
\[ W^{(2)}_{ij}=\begin{pmatrix}
    w_{00i} & w_{00j} & w_{10i} & w_{10j} \\
    w_{01i} & w_{01j} & w_{11i} & w_{11j} \\
\end{pmatrix} \]
has rank $1$. 
This happens if and only if $F_{\bullet \bullet i}=F_{\bullet \bullet j}=F_{ 0 \bullet (i,j)}=F_{ 1 \bullet  (i,j)}=0$. Hence we have a clear way to distinguish types II and III from the types IV and V. 

To decide whether $T_{ij}(y)$ falls into type II or III, one needs to compute the rank of the factor $M$ from \eqref{eq: Ty_factorization}. 
We see that $\rank M = 1$ if and only if 
        $F_{ \bullet 0 (i,j)} = F_{\bullet 1 (i,j)} = 0$, 
since
        $$ \rank M = \rank \begin{pmatrix}
            w_{00 i} & w_{10i} \\
            w_{00j} & w_{10j}
        \end{pmatrix}
        = \rank \begin{pmatrix}
            w_{01 i} & w_{11i} \\
            w_{01j} & w_{11j}
        \end{pmatrix}.
        $$
        %Hence, $\rank M = 1$ if and only if 
        %$F_{ \bullet 0 (i,j)} = F_{\bullet 1 (i,j)} = 0.$ 
        %Note that based on our assumption of a common factor, the matrices $W_{ \bullet 0 (i,j)}$ and $W_{\bullet 1 (i,j)}$ are proportional and thus drop in rank simultaneously. Moreover, note that the rank of $M$ coincides under the assumptions with the rank of another flattening, namely $\rank M =r_{3,ij}$. 
        
        Now assume that the factorization \eqref{eq: Ty_factorization} does not happen. 
        The rank of $T_{ij}(y)$ can be $1$ for any $y$ (in which case we obtain type IV) if either the rows or columns of $T_{ij}(y)$ are proportional. Thus either
        \begin{align*}
         \rank W^{(1)}_{ij} &= \rank \begin{pmatrix}
            w_{00i} & w_{01i} & w_{00j} & w_{01j} \\
            w_{10i} & w_{11i} & w_{10j} & w_{11j}
        \end{pmatrix} =1 \,\,\text{ or } \\
        \rank W^{(3)}_{ij} &= 
        \rank \begin{pmatrix}
            w_{00i} & w_{01i} & w_{10i} & w_{11i} \\
            w_{00j} & w_{01j} & w_{10j} & w_{11j}
        \end{pmatrix} =1.
        \end{align*}
        The first matrix has rank $1$ if and only if 
        $ F_{ \bullet \bullet i} = F_{\bullet \bullet j} = F_{\bullet 0 (i,j)} = F_{\bullet 1 (i,j)} = 0.$
        The second matrix has rank $1$ if and only if
        $ F_{ 0 \bullet (i,j)} = F_{1 \bullet (i,j)} = F_{\bullet 0 (i,j)} = F_{\bullet 1 (i,j)} = 0.$
        Otherwise, the matrix $T_{ij}(y)$ must be of type V.
    \end{proof}
\end{lemma}

Since for $n=1$ the size of $I$ is at most $2$, we have covered everything  needed  to compute the Euler characteristic $\chi(Y_{W,1})$ 
\begin{corollary}[Theorem \ref{thm:main}, part (1)] \label{cor: part 1}
    If exactly the same factors of the principal $A$-determinant $E_A$ vanish on $W$ and $W'$, then  $\chi(Y_{W,1})=\chi(Y_{W',1})$.
\end{corollary}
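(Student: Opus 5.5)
The proof is a term-by-term check of the formula in Theorem \ref{thm: ML_product}, specialized to $r=2$, $d_1=d_2=1$ and $n=1$. By Proposition \ref{prop:ML degree = Euler char}, $\chi(Y_{W,1}) = \mldeg(X_{W,1})$, so it suffices to show that $\mldeg(X_{W,1})$ depends only on which factors of $E_A$ vanish. In this case the formula reads
\[ \mldeg(X_{W,1}) = \sum_{\varnothing \neq I \subseteq \{0,1\}} (-1)^{\abs{I}} \sum_{\mathcal{J}} (-1)^{\abs{\mathcal{J}}} \chi(V_I \cap X_{\mathcal{J}}), \]
where $\mathcal{J} = J_1 \times J_2$ runs over pairs with $J_i \subsetneq \{0,1\}$, so $\abs{\mathcal{J}} \in \{0,1,2\}$. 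The strategy is to show that every summand $\chi(V_I \cap X_{\mathcal{J}})$ is a function of the vanishing pattern of the factors listed in Proposition \ref{prop:factors_principal_A_det}. For $n=1$ these factors are the six $2$-minors $F_{0\bullet\bullet}, F_{1\bullet\bullet}, F_{\bullet0\bullet}, F_{\bullet1\bullet}, F_{\bullet\bullet0}, F_{\bullet\bullet1}$ and the hyperdeterminant $H = H_{01}$.

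There are three kinds of summands.
\begin{enumerate}
\item For $\abs{\mathcal{J}} \in \{1,2\}$, Lemma \ref{lem: easy Euler char} applies directly: the summand depends only on which $2$-minors vanish. When $\abs{\mathcal{J}}=2$ it is always $0$. When $\abs{\mathcal{J}}=1$ it equals $2$ minus the rank of a $\abs{I}\times 2$ submatrix of a face of $W$. Since all entries are nonzero, this rank is $1$ or $2$ according to whether the relevant minor $F_{i\bullet(0,1)}$ or $F_{\bullet j(0,1)}$ vanishes (and it is $1$ if $\abs{I}=1$).
\item For $\mathcal{J}=\varnothing$ and $\abs{I}=1$, Lemma \ref{lem: one quadric} gives $\chi(V_{\{k\}}) = 4 - \rank W_{\bullet\bullet k}$. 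This is controlled by whether $F_{\bullet\bullet k}$ vanishes.
\item For $\mathcal{J}=\varnothing$ and $I=\{0,1\}$, Lemma \ref{lem: types_of_Ty} gives $\chi(V_{\{0,1\}})$ from the type of $T_{01}(y)$, and Lemma \ref{lem: type characterization} shows that this type is determined by the vanishing of $H_{01}$ and of the six $2$-minors of $W_{\bullet\bullet(0,1)} = W$.
\end{enumerate}

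The only point needing care is that the five conditions in Lemma \ref{lem: type characterization} are exhaustive for every vanishing pattern that actually occurs with $H=0$. Otherwise some pattern might fall between the listed types and the type would not be a function of the pattern. I would resolve this by comparing with the realizable patterns of Proposition \ref{prop:factors 2x2x2}:
\begin{itemize}
\item \emph{$H$ vanishes alone, or with exactly one minor}: type V.
\item \emph{The pattern in case d)}, where four minors forming two pairs of opposite faces vanish together with $H$: this is exactly type II or type IV, depending on which pair of opposite faces is left nonvanishing.
\item \emph{All seven factors vanish}: type III.
\end{itemize}
No other pattern containing $H$ is realizable; in particular case c) does not contain $H$. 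Alternatively, the proof of Lemma \ref{lem: type characterization} itself shows the trichotomy directly. Either $T_{01}$ factors (rank of $W^{(2)}_{01}$ is $1$), or rows or columns are proportional (rank of $W^{(1)}_{01}$ or $W^{(3)}_{01}$ is $1$), or neither happens, which is type V. Each of these alternatives is expressed purely through vanishing of minors, so the type is a function of the pattern even without the classification.

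Combining these cases, every term in the formula depends only on the set of vanishing factors of $E_A$. Hence $\mldeg(X_{W,1}) = \mldeg(X_{W',1})$, and by Proposition \ref{prop:ML degree = Euler char}, $\chi(Y_{W,1}) = \chi(Y_{W',1})$.
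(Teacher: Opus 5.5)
Your proposal is correct and is essentially the paper's argument: since $|I|\le 2$ when $n=1$, Theorem \ref{thm: ML_product} together with Lemmas \ref{lem: easy Euler char}, \ref{lem: one quadric}, \ref{lem: types_of_Ty} and \ref{lem: type characterization} shows that every summand is determined by the vanishing pattern, and that is all the paper's one-line justification invokes. Of your two ways of checking that the types are exhaustive, prefer the flattening trichotomy (which is just the proof of Lemma \ref{lem: type characterization}), because the paper partly justifies Proposition \ref{prop:factors 2x2x2} through Theorem \ref{thm:P1 x P1 x P1}, and Section \ref{sec:P1 x P1 x P1} itself leans on Theorem \ref{thm:main}(1).
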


\subsubsection*{\underline{The cases with $\abs{I} \geq 3$}}

Now we can deal with the cases $\abs{I} \geq 3$.  If $\abs{I}=3$ and the corresponding $2\times2\times3$ hyperdeterminant does not vanish, then the resultant $\mathrm{Res}(q_k \, : \, k \in I)$ is nonzero, and hence $V_I = \emptyset$. 
 More generally, we can say the following. 
\begin{lemma}\label{lem: no solution}
	For an index set $I \subseteq [\bar{n}]$ with $|I|\geq 3$, 
	if one of the $2 \times 2 \times 3$ hyperdeterminants $H_{ijk}$ with $i,j,k \in I$ does not vanish, then $V_I =\varnothing$ and $\chi(V_I)=0$.  For $\abs{I}$=3, $V_I \neq \emptyset$ if and only if $H_{ijk}=0$.  
\end{lemma}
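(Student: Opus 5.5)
The argument rests on two facts: $V_I$ is monotone in $I$, and for three bilinear forms on $\PP^1\times\PP^1$ the $2\times2\times3$ hyperdeterminant is exactly the resultant. First I treat $|I|=3$, say $I=\{i,j,k\}$. By definition, $H_{ijk}=\mathrm{Res}(q_i,q_j,q_k)$ is the multigraded resultant of three forms of bidegree $(1,1)$ on $\PP^1\times\PP^1$ (the dimension count is right: three equations, a two-dimensional variety). The defining property of this resultant, taken from \cite[Chapter 14]{GKZ94} or from the general theory of resultants on a product of projective spaces, is that $\mathrm{Res}(q_i,q_j,q_k)=0$ if and only if the three forms have a common zero in $\PP^1\times\PP^1$. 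That common zero set is precisely $V_{\{i,j,k\}}$. This gives the equivalence $V_I\neq\varnothing \iff H_{ijk}=0$ for $|I|=3$.

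One point needs care. The common zero must be allowed anywhere in $\PP^1\times\PP^1$, not only in the torus, and $V_I$ is indeed defined in $\PP^1\times\PP^1$, so the two notions agree. To make the identification concrete, I would describe $V_I$ as
\[
V_I=\{(x,y)\in\PP^1\times\PP^1 : T_I(y)\,x=0\}.
\]
This set is nonempty exactly when the $3\times 2$ matrix $T_I(y)$ has rank at most $1$ for some $y\in\PP^1$. This is the classical description of the degeneracy locus whose defining polynomial is the $2\times2\times3$ hyperdeterminant, equivalently the resultant, in the boundary format, where it coincides with the resultant.

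For general $|I|\geq 3$, observe that $V_I=\bigcap_{k\in I}V(q_k)\subseteq V_{\{i,j,k\}}$ for every triple $\{i,j,k\}\subseteq I$. If some $H_{ijk}\neq 0$ with $i,j,k\in I$, the three-element case gives $V_{\{i,j,k\}}=\varnothing$. Hence $V_I=\varnothing$, and so $\chi(V_I)=0$.

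The only real obstacle is justifying the resultant characterization. The issue is that the paper \emph{defines} $H_{ijk}$ as $\mathrm{Res}(q_{k_1},q_{k_2},q_{k_3})$ without spelling out in which sense this is a resultant. I would settle this by citing GKZ's result that for boundary formats $2\times2\times3$ the hyperdeterminant equals the resultant of the associated system of bilinear forms, and that it vanishes exactly when the forms have a common nontrivial zero on $\PP^1\times\PP^1$. Failing that, I would prove it by elimination. Eliminate $x$: a common zero exists iff the three $2\times2$ minors of $T_I(y)$, each a binary quadratic in $y$, have a common root in $\PP^1$. The vanishing of this condition defines an irreducible hypersurface in the space of tensors, which must therefore be the zero set of $H_{ijk}$.
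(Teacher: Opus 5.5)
Your proposal is correct and follows essentially the same route as the paper. The paper's justification is the one-line remark before the lemma: $H_{ijk}$ is by definition the resultant $\mathrm{Res}(q_i,q_j,q_k)$ of three bilinear forms on $\PP^1\times\PP^1$, so $H_{ijk}\neq 0$ exactly when $V_{\{i,j,k\}}=\varnothing$, and the case $|I|\geq 3$ then follows from the inclusion $V_I\subseteq V_{\{i,j,k\}}$. Your remarks that $V_I$ lives in $\PP^1\times\PP^1$ rather than the torus, and that in the boundary format the hyperdeterminant coincides with the resultant, only make explicit what the paper leaves implicit.
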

However, for $\abs{I} \geq 4$,  $V_I \neq \emptyset$ is not equivalent to the vanishing of all $2 \times 2 \times 3$ hyperdeterminants 
$H_{ijk}$ with $i,j,k \in I$.  We do not have anything further to report in this case, so we will treat only the case of $\abs{I}=3$ in the remainder. 

For $I=\{k_1,k_2,k_3\}$ and  $H_{k_1 k_2 k_3}=0$, 
in order to understand the geometry of $V_I$, we exploit the fact that 
\[ V_{\{k_1,k_2,k_3\}} = V_{\{k_1,k_2\}} \cap V_{\{k_1,k_3\}} \cap V_{\{k_2,k_3\}}, \] 
and our results about $V_{\{i,j\}}$. 
A special role is played by the flattening
$$W^{(3)}_I = 
\begin{pmatrix}
        w_{00k_1} & w_{01k_1} & w_{10k_1}  & w_{11k_1} \\
        w_{00k_2} & w_{01k_2} & w_{10k_2}  & w_{11k_2} \\
        w_{00k_3} & w_{01k_3} & w_{10k_3}  & w_{11k_3} \\
    \end{pmatrix}
    $$
of the $2\times2\times 3$ subtensor $W_{\bullet \bullet (k_1,k_2,k_3)}$.

\begin{remark}
Note that the rank of the above matrix $W_I^{(3)}$ is exactly what defines the singular locus of the $2\times 2 \times 3$ hyperdeterminant $H_{k_1 k_2 k_3}$ (see Proposition 5.4.(c) in \cite{WeymanZelevinsky1996}). Namely, the singular locus is exactly the set of $W$ for which $\rank W_I^{(3)} <3$. This locus cannot be described by the vanishing of certain factors of the principal $A$-determinant (see Example \ref{ex: counterexample} later).

For the $2 \times 2 \times 2$-hyperdeterminants $H_{ij}$ the singular locus is given by the set of $W$ where the flattenings $W_{ij}^{(2)}$ have rank $\leq 1$ (see the end of Section 2 in loc.cit.) As we already noted, if we consider $W$ only with entries in $\C^*$, then this is governed by the vanishing of the $2$-minors of the corresponding $2\times2\times2$-tensor. 
\end{remark}

\begin{proposition}\label{prop: chi of qs}
    Let $W \in (\C^*)^{4n+4}$ and consider $I = \{k_1,k_2,k_3\}$. Then, the Euler characteristic $\chi(V_I)$ depends only on the vanishing of the factors of $E_A$ if $H_{k_1 k_2 k_3}$ does not vanish at $W$ or if one of $H_{ij}=0$ with $i,j \in I$. Otherwise, 
    $\chi(V_I)$ depends on the factors of $E_A$ that vanish at $W$, as well as whether $\rank W^{(3)}_I =3$.  \end{proposition}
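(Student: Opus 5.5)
We organize the proof by cases on $H_{k_1k_2k_3}$ and then on the pairwise hyperdeterminants $H_{ij}$ with $i,j \in I$. If $H_{k_1k_2k_3}(W)\neq 0$, Lemma \ref{lem: no solution} gives $V_I=\varnothing$, so $\chi(V_I)=0$, which depends only on a factor of $E_A$. Assume from now on that $H_{k_1k_2k_3}=0$, so $V_I\neq\varnothing$ by Lemma \ref{lem: no solution}. The basic tool is the identity
\[ V_I = V_{\{k_1,k_2\}}\cap V_{\{k_1,k_3\}}\cap V_{\{k_2,k_3\}}, \]
together with the fact that, by Lemmas \ref{lem: types_of_Ty} and \ref{lem: type characterization}, the type (I--V) of each $V_{\{i,j\}}$ is read off from the vanishing of the $2$-minors of $W_{\bullet\bullet(i,j)}$ and of $H_{ij}$.

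\textbf{Case $H_{ij}=0$ for some pair $i,j\in I$.} The idea is that $V_{\{i,j\}}$ is then very small or very structured, and $V_I$ is cut out of it by the single extra quadric $q_k$ ($k$ the third index), which can be analysed using $2$-minors alone.
\begin{itemize}
\item \emph{Type V.} $V_{\{i,j\}}$ is one point $(x^*,y^*)$. Since $V_I\neq\varnothing$, we get $V_I=\{(x^*,y^*)\}$ and $\chi(V_I)=1$.
\item \emph{Type II.} $V_{\{i,j\}}=\PP^1\times\{y^*\}$, and $q_k(\cdot,y^*)$ is a linear form in $x$. It vanishes identically exactly when its two coefficients $w_{00k}y^*_0+w_{01k}y^*_1$ and $w_{10k}y^*_0+w_{11k}y^*_1$ both vanish. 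Since $y^*$ is the common root of the rows of $W_{\bullet 0(i,j)}$ and $W_{\bullet 1(i,j)}$ (up to the factor $a_0y_0+a_1y_1$), this condition should amount to the vanishing of the slice minors $F_{\bullet\bullet k}$ together with the minors $F_{0\bullet(i,k)}$, $F_{1\bullet(i,k)}$. Hence $\chi(V_I)\in\{1,2\}$ is decided by $2$-minors.
\item \emph{Type III.} $V_{\{i,j\}}=(\PP^1\times\mathrm{pt})\sqcup(\mathrm{pt}\times(\PP^1\setminus\mathrm{pt}))$, and we intersect each piece with $V(q_k)$. On the piece $\mathrm{pt}\times(\PP^1\setminus \mathrm{pt})$, $q_k$ restricts to a linear form in $y$, which either vanishes identically or has at most one zero, possibly at the removed point. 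Each of these alternatives should be equivalent to the vanishing of specific minors, namely a rank condition on a face matrix with nonzero entries, as in Lemma \ref{lem: easy Euler char}.
\item \emph{Type IV.} $V_{\{i,j\}}\cong\PP^1$ is the graph of a M\"obius map, with $x$ determined by $y$ through a row (or column) proportionality. Restricting $q_k$ to this curve gives a binary form of degree $\le 2$, and the number of its zeros, or whether it vanishes identically, is controlled by $H_{ik}$, $H_{jk}$ and the minors of $W^{ik}$, $W^{jk}$.
\end{itemize}
In each subcase, $\chi(V_I)$ is then a function of which factors vanish. The main obstacle is this case analysis, especially type IV: the claim that the zero count of the degree-$\le 2$ restriction is governed by $E_A$-factors has to be checked against the genericity assumption $W\in(\CC^*)^{4n+4}$. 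If that translation breaks down, I would fall back on the saturation computations the paper already uses, listing the components of $\{H_{ij}=0,\,H_{k_1k_2k_3}=0\}$ after saturating by $\prod w_{ijk}$.

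\textbf{Case all three $H_{ij}\neq 0$.} Each $V_{\{i,j\}}$ is then two points, so $V_I$ is either $1$ or $2$ points. I will show that $V_I$ is two points exactly when the three quadrics $q_{k_1},q_{k_2},q_{k_3}$ are linearly dependent, that is, when $\rank W^{(3)}_I<3$. If they are dependent, say $q_{k_3}=\alpha q_{k_1}+\beta q_{k_2}$, then $V_I=V_{\{k_1,k_2\}}$ consists of two points. Conversely, if $V_I$ has two points, then $V_I$ contains all of $V_{\{k_1,k_2\}}$. A bilinear form vanishing on these two points lies in a $2$-dimensional space: the four-dimensional space of $(1,1)$-forms, cut by two independent point conditions. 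That space is spanned by $q_{k_1},q_{k_2}$, so $q_{k_3}$ is in their span. Independence of the two point conditions follows because the two points differ in both coordinates, which holds for type I. Thus $\chi(V_I)=1$ or $2$ according to $\rank W^{(3)}_I=3$ or $<3$. This rank condition is the extra datum in the statement; by the Remark it is the singular locus of $H_{k_1k_2k_3}$ and is not determined by $E_A$.
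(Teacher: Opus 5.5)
\paragraph{The generic case.} Your treatment of the case where all three $H_{ij}\neq 0$ is correct, and it takes a different, cleaner route than the paper. The paper factors each $\det T_{ij}(y)$ and requires the linear factors to coincide. It encodes this as rank one of an auxiliary $3\times 3$ matrix $\tilde W$, and then relies on the computed identity $V(2\text{-minors of }\tilde W)=V(3\text{-minors of }W^{(3)}_I)$. Your interpolation argument replaces that computation and explains why $\mathrm{rank}\,W^{(3)}_I$ is the relevant datum. The argument is that the $(1,1)$-forms vanishing at two distinct points form a $2$-dimensional space, which must then be spanned by $q_{k_1},q_{k_2}$.

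Two small repairs are needed. First, state that $q_{k_1},q_{k_2}$ are independent: proportional rows would force $H_{k_1k_2}=0$. Second, distinctness of the two points already suffices for independent point conditions, since evaluation at $(a,b)$ is the functional $a\otimes b$.

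Your type V and type II subcases are also fine. In type II, your condition says exactly that the $2\times 6$ mode-$2$ flattening of $W_{\bullet\bullet(k_1,k_2,k_3)}$ has rank one. Equivalently, $T_{ik}$ and $T_{jk}$ are of type II or III, as in the paper.

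\paragraph{The gap: types III and IV.} You only assert these subcases (``should'', ``has to be checked''), and two of the assertions are off.
\begin{itemize}
\item In type IV with proportional \emph{columns}, $q_i=\ell(x)m_i(y)$ and $q_j=\ell(x)m_j(y)$. So $V_{\{i,j\}}=\{x^*\}\times\PP^1$, not the graph of a M\"obius map.
\item In type III, whether the zero of $q_k(x^*,\cdot)$ hits the removed point $y^*$ is not a minor condition. Because $F_{\bullet\bullet i}=0$, $H_{ik}$ is a nonzero multiple of $q_k(x^*,y^*)^2$, so this is decided by $H_{ik}$.
\end{itemize}

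Both cases close without any saturation computation:
\begin{itemize}
\item In type III and in type IV with proportional rows, $W^{(3)}_{ij}$ has rank one. Hence $q_j=\mu q_i$ and $V_I=V_{\{i,k\}}$, whose Euler characteristic is read off from the type of $T_{ik}$ via Lemmas \ref{lem: types_of_Ty} and \ref{lem: type characterization}.
\item In type IV with proportional columns, $q_k(x^*,y)$ is linear in $y$. It vanishes identically iff $F_{\bullet 0(i,k)}=F_{\bullet 1(i,k)}=0$, which gives $\chi(V_I)=2$. Otherwise $\chi(V_I)=1$, since $V_I\neq\varnothing$.
\end{itemize}
This is the same row/column distinction the paper uses in its case analysis over triples of types.
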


The above proposition and Lemma \ref{lem: no solution} together with the results for $\abs{I}\leq2$ combine to give the remaining parts of Theorem \ref{thm:main}. 
\begin{corollary}[Theorem \ref{thm:main}, parts (2) and (3)] \label{cor: main23}
If exactly the same factors of $E_A$ vanish on $W$ and $W'$, then $\chi(Y_{W,n}) = \chi(Y_{W',n})$ as long as 
\begin{itemize}
    \item $n\geq 2$  and at least one of the $2\times 2 \times 3$ hyperdeterminants $H_{ijk}\neq 0$ with $\{i,j,k\} \subseteq [\bar{n}]$
    \item $n=2$ and at least one of $H_{01},H_{02},H_{12}$ vanishes. 
\end{itemize}
\end{corollary}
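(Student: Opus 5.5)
We use the explicit formula of Theorem \ref{thm: ML_product} with $r=2$ and $d_1=d_2=1$. Since $\chi(Y_{W,n})=(-1)^{n+1}\mldeg(X_{W,n})$ by Proposition \ref{prop:ML degree = Euler char}, it suffices to show that every summand $(-1)^{|I|+|\mathcal J|}\chi(V_I\cap X_{\mathcal J})$, or at least every group of summands that could depend on more than the vanishing pattern of $E_A$, is determined by which factors of $E_A$ vanish at $W$. We sort the terms by type and by $|I|$.

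\emph{Step 1 (terms determined by the pattern).} The terms with $1\le|\mathcal J|\le 2$ depend only on the vanishing of $2$-minors, by Lemma \ref{lem: easy Euler char}. For $|I|=1$ we have $\chi(V_{\{k\}})=4-\rank W_{\bullet\bullet k}$ by Lemma \ref{lem: one quadric}. For $|I|=2$, Lemmas \ref{lem: types_of_Ty} and \ref{lem: type characterization} read off $\chi(V_{\{i,j\}})$ from the minors of $W_{\bullet\bullet(i,j)}$ and from $H_{ij}$. Hence only the terms $\chi(V_I)$ with $|I|\ge 3$ remain.

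\emph{Step 2 (large $I$).} Fix a triple $\{a,b,c\}$ with $H_{abc}(W)\neq 0$. By Lemma \ref{lem: no solution}, every $I\supseteq\{a,b,c\}$ has $V_I=\varnothing$, so these terms vanish. For $n=2$ this already covers $I=[\bar 2]$, which finishes the first bullet when $n=2$. In the second bullet ($n=2$ with some $H_{ij}=0$, $i,j\in[\bar 2]$), either $H_{012}\neq 0$ and we are done as before, or Proposition \ref{prop: chi of qs} applies directly because one of the $H_{ij}$ vanishes. So both $n=2$ cases follow immediately.

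\emph{Step 3 (the main obstacle: $n\ge 3$ and triples avoiding the good one).} Consider a triple $I$ with $H_I=0$ and all three pairwise $H_{ij}\neq 0$. Then all three $V_{\{i,j\}}$ are of type I, each consisting of two points. If $\rank W^{(3)}_I=2$, the third quadric lies in the span of the other two, so $V_I=V_{\{i,j\}}$ and $\chi(V_I)=2$. If $\rank W^{(3)}_I=3$, $V_I$ is a single common point and $\chi(V_I)=1$. So such a term is sensitive to the rank, as Proposition \ref{prop: chi of qs} warns.

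My plan for this step is as follows. First, apply Lemma \ref{lem: no solution} to every triple $I$ for which some $H_{ijk}$ with $\{i,j,k\}\subseteq I$ is nonzero. Second, for the remaining triples, try to show that the rank condition is forced by the vanishing pattern. The idea is that a rank drop $\rank W^{(3)}_I=2$ expresses $q_{k_3}$ as a combination of $q_{k_1}$ and $q_{k_2}$. Combined with a triple $\{a,b,c\}$ overlapping $I$ and having $H_{abc}\neq 0$, this should propagate to force additional vanishing among the $2\times2\times3$ hyperdeterminants or the $2$-minors of $E_A$. The tool would be Lemma \ref{lem: minors H} together with the saturation computations of Section \ref{sec:factors}. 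Third, the terms with $|I|\ge 4$ not containing a good triple would be handled in the same way: write $V_I=\bigcap V_{T}$ over triples $T\subseteq I$ and use the linear-span description above.

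I expect this propagation argument to be the crux. If it fails, the statement for $n\ge 3$ must be read with the hypothesis applied to every triple that actually occurs in the formula. In that case Steps 1 and 2 already complete the proof, since all $|I|\ge 3$ terms vanish.
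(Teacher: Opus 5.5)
Your Steps 1 and 2 are exactly the paper's proof. The paper feeds Lemma \ref{lem: easy Euler char}, the $|I|\le 2$ results (Lemmas \ref{lem: one quadric}, \ref{lem: types_of_Ty}, \ref{lem: type characterization}), Lemma \ref{lem: no solution} and Proposition \ref{prop: chi of qs} into the formula of Theorem \ref{thm: ML_product}. For $n=2$ your Step 2 settles both bullets completely.

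\textbf{Step 3 cannot succeed.} The propagation plan fails because, for $n\ge 3$, the first bullet read literally (only \emph{some} $H_{ijk}\neq 0$) is false. Here is a counterexample.
\begin{itemize}
\item Take the tensors $W,W'$ of Example \ref{ex: counterexample}, with slices $q_0,q_1,q_2$ and $q_0,q_1,q_2'$. Append the same generic fourth slice $q_3$ to both.
\item Every factor of $E_A$ involving the index $3$ is then nonzero on both tensors. In particular $H_{ij3}\neq 0$ for $i,j\le 2$: since $H_{ij}\neq 0$, the set $V_{\{i,j\}}$ is two points, and a generic $q_3$ avoids them.
\item Hence $W$ and $W'$ have the same vanishing pattern, and $H_{013}\neq 0$.
\item In Theorem \ref{thm: ML_product}, every term with $|I|\le 2$ or $\mathcal J\neq\varnothing$ is fixed by this common pattern. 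Every $V_I$ with $|I|\ge 3$ other than $V_{\{0,1,2\}}$ contains a triple with nonzero hyperdeterminant, so it is empty for both tensors.
\item The one remaining term is $\chi(V_{\{0,1,2\}})$. It equals $2$ for $W$, where $q_2=q_0+q_1$ so the flattening has rank $2$. It equals $1$ for $W'$.
\item So the two ML degrees differ by $1$, and $\chi(Y_{W,3})\neq\chi(Y_{W',3})$.
\end{itemize}
Thus no argument can force $\mathrm{rank}\, W^{(3)}_I$ from the vanishing pattern plus a single good triple.

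\textbf{What to do instead.} The correct hypothesis for $n\ge 3$ is the one in part (3) of Theorem \ref{thm:main}: all $2\times 2\times 3$ hyperdeterminants are nonzero. This is exactly your fallback. Under it, every $V_I$ with $|I|\ge 3$ is empty by Lemma \ref{lem: no solution}. Drop Step 3 and state this hypothesis explicitly; your Steps 1--2 are then a complete proof, identical to the paper's.
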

It is instructive to write down the implications for $\PP^1 \times \PP^1 \times \PP^2$. 
\begin{corollary}\label{cor: chi depends on B}
    Let $n=2$ and $W \in (\C^*)^{12}$.
 Then $\chi(Y_{W,2})$ depends only on the factors of $E_A$ that vanish at $W$ and on whether the flattening $W^{(3)}$ has full rank. 
\end{corollary}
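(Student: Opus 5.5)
The plan is to evaluate the formula of Theorem \ref{thm: ML_product} for $r=2$, $d_1=d_2=1$ and $n=2$, and to go through its terms one at a time. By Proposition \ref{prop:ML degree = Euler char}, $\chi(Y_{W,2})$ is determined by the ML degree, and the formula writes the ML degree as a signed sum of the quantities $\chi(V_I \cap X_{\mathcal J})$ for $\varnothing \neq I \subseteq \{0,1,2\}$ and $J_1,J_2 \subsetneq \{0,1\}$. It therefore suffices to show that each term depends only on the set of vanishing factors of $E_A$ and, for the single remaining term, on whether $W^{(3)}$ has rank $3$.

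\textbf{Step 1: terms with $\mathcal J \neq \varnothing$.} These are handled by Lemma \ref{lem: easy Euler char}. They depend only on which $2$-minors vanish, and the $2$-minors are factors of $E_A$ by Proposition \ref{prop:factors_principal_A_det}.

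\textbf{Step 2: terms with $\mathcal J = \varnothing$ and $|I| \le 2$.}
\begin{itemize}
\item For $|I|=1$, Lemma \ref{lem: one quadric} gives $\chi(V_{\{k\}}) = 4-\rank W_{\bullet\bullet k}$. This depends only on whether $F_{\bullet\bullet k}$ vanishes.
\item For $|I|=2$, Lemmas \ref{lem: types_of_Ty} and \ref{lem: type characterization} show that $\chi(V_{\{i,j\}})$ is determined by the type of $T_{ij}(y)$. The type is determined by which of the $2$-minors of $W_{\bullet\bullet(i,j)}$ and which of the $H_{ij}$ vanish, and all of these are factors of $E_A$.
\end{itemize}

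\textbf{Step 3: the term $I=\{0,1,2\}$, $\mathcal J=\varnothing$.} This is where Proposition \ref{prop: chi of qs} is used, with two cases.
\begin{itemize}
\item If $H_{012} \neq 0$, then $V_I = \varnothing$ by Lemma \ref{lem: no solution}, so $\chi(V_I)=0$.
\item If $H_{012}=0$, Proposition \ref{prop: chi of qs} says that $\chi(V_I)$ depends on the vanishing factors of $E_A$ together with whether $\rank W^{(3)}_I = 3$.
\end{itemize}
For $n=2$ the only triple is $I=\{0,1,2\}$, so $W^{(3)}_I$ is exactly the full mode-$3$ flattening $W^{(3)}$ of the $2\times 2\times 3$ tensor. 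Summing the contributions then gives the claim.

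\textbf{Main obstacle.} Nearly all of the content sits in Proposition \ref{prop: chi of qs}, which I take as given. The only point needing care is the identification of $W^{(3)}_I$ with $W^{(3)}$, together with the observation that when $H_{012}\neq 0$ the rank condition is irrelevant. So the statement is correct as formulated: the dependence on the rank is allowed but not always present.
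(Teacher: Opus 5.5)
Your proposal is correct and follows the paper's route. The paper obtains this corollary the same way: it evaluates the formula of Theorem~\ref{thm: ML_product} term by term, using Lemma~\ref{lem: easy Euler char} for $\mathcal J\neq\varnothing$, Lemmas~\ref{lem: one quadric}, \ref{lem: types_of_Ty} and \ref{lem: type characterization} for $|I|\le 2$, and Lemma~\ref{lem: no solution} together with Proposition~\ref{prop: chi of qs} for $I=\{0,1,2\}$, where for $n=2$ the matrix $W^{(3)}_{\{0,1,2\}}$ is exactly the full flattening $W^{(3)}$, as you noted.
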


\begin{proof}[Proof of Proposition \ref{prop: chi of qs}]
Without loss of generality we assume that $I=\{0,1,2\}$. If $H_{012}\neq 0$, then $\chi(V_I)=0$. In the remainder we assume that 
$H_{012} = 0$ and $V_I$ is not empty. We focus on the matrices $T_{ij}(y)$ for $i,j \in I$.  We use Lemma \ref{lem: types_of_Ty} extensively.

If at least one matrix $T_{ij}(y)$  is of type V, then it follows that $V_{ij}$, and thus $V_I$, consists of exactly one point with $\chi(V_I)=1$.   From now on assume that no $T_{ij}(y)$ is of type V.

Now suppose that one of the matrices, say $T_{01}(y)$, is of type II. Therefore there exists exactly one $y \in \P^1$ such that $\rank T_{01}(y)<2$. In order for the system $T_I(y) \cdot x =0$ to have a solution (as we assume it does), the other two matrices must also drop rank at $y$. 
If the matrices $T_{02}(y)$ and $T_{12}(y)$ are both of types II or III, then $T_I(y) \cdot x =0$ has solution of the form $\PP^1 \times \pt$ and $\chi(V_I)=2$. If at least one of $T_{02}(y)$ and $T_{12}(y)$ is of type I or IV, then $T_I(y) \cdot x =0$ has a solution of the form $\pt \times \pt$ and $\chi(V_I)=1$.  Now we assume that no  $T_{ij}(y)$ is of type II.

If all matrices $T_{ij}(y)$ are of type III, then their factorizations must coincide and thus $\chi(V_I)=3$. If at least one is of type IV and the rest is of type III, then there is at least one $T_{ij}(y)$ that has rank $1$ at each point $y \in \P^1$. Since the other matrices also have either rank $1$ or $0$ at each $y$, $T_I(y)$ has rank $1$ for each $y$, therefore, for each $y$ there is exactly one solution of $T_I(y) \cdot x=0$ and $\chi(V_I)=2$.  From now on assume that at least one $T_{ij}(y)$ is of type I. 

Note that it is not possible to have exactly two matrices $T_{ij}(y)$ of type III since this would force the third one to be of this type also.  Suppose we have only one matrix of type I, say $T_{01}(y)$. Then the types of our three matrices are either I,III,IV or I, IV,IV. Let $y^1,y^2$ be the two points giving $\rank T_{01}(y^i)=1$. In both possible cases, we have $\rank T_I(y^i)=1$, 
and thus $V_I$ is of the form $\pt \times \pt \sqcup \pt \times \pt$ and $\chi(V_I)=2$. 
%We may assume now that we have at least two of the matrices $T_{ij}(y)$ of type (I).

Now suppose that $T_{01}(y)$ and $T_{02}(y)$ are of type I. Then the quadratic equations $\det T_{01}(y) =0$ and $\det T_{02}(y) =0$ have exactly two solutions each and we can write 
\begin{align*}
\det T_{01}(y)&=(a_{01} y_0 + b_{01} y_1) (c_{01} y_0 + d_{01} y_1), \\
\det T_{02}(y)&=(a_{02} y_0 + b_{02} y_1) (c_{02} y_0 + d_{02} y_1)
\end{align*}
with $(a_{0i},b_{0i}) \neq (c_{0i},d_{0i})$. Since we assume that $V_I$ is nonempty, one or two factors between $\det T_{01}(y)$ and $\det T_{02}(y)$ must coincide.  

If $T_{12}(y)$ is of type IV with proportional rows or of type III, which also has proportional rows, then there exists $\mu \in \C^*$ such that $\det T_{01}(y) = \mu \det T_{02}(y)$. Thus, both factors coincide up to multiplicity and $\chi(V_I)=2$. 

If $T_{12}(y)$ is of type IV with columns proportional by a number $\mu$, then we have 
 \begin{align*}
    \det T_{01}(y) & =  (w_{001} y_0 + w_{011} y_1) ((\mu w_{000} - w_{100})y_0 + (\mu w_{010} - w_{110}) y_1) \\
    \det T_{02}(y) & =  (w_{002} y_0 + w_{012} y_1) ((\mu w_{000} - w_{100})y_0 + (\mu w_{010} - w_{110}) y_1).
    \end{align*}
    One factor is the same, but the other is not since we assume that the rows of $T_{12}(y)$ are not proportional. Hence, in this case,
    $V_I$ is a single point  and $\chi(V_I)=1$.
    Note here that a matrix of type IV has either its rows or its columns   proportional, but never both. Which one it is depends on the rank of $W_{ij}^{(2)}$ and therefore also on the vanishing pattern of the $2$-minors.
    
% If the rows of $T_{12}(y)$ are proportional by a number $\mu$, then 
%     $\det T_{01}(y) = \mu \det T_{02}(y)$ and both solutions agree. Hence, there are two solutions for $q_0=q_1=q_2=0$. Note that for a type (IV) submatrix, either its rows or its columns are proportional by a number, but not both. 
%     The columns are proportional by a number if and only if both rows have a (distinct) factor.
%     The rows are proportional by a number if none of the rows has a factor. Therefore, the rows of 
%     $T_{12}(y)$ are proportional if $T_{12}(y)$ is of type (III) or if it is of type (IV) with none of the rows having a factor. If $T_{12}(y)$ is of type (IV) with the columns being proportional by a number $\mu$, then
%     \begin{align*}
%     \det T_{01}(y) & =  (w_{001} y_0 + w_{011} y_1) ((\mu w_{000} - w_{100})y_0 + (\mu w_{010} - w_{110}) y_1) \\
%     \det T_{02}(y) & =  (w_{002} y_0 + w_{012} y_1) ((\mu w_{000} - w_{100})y_0 + (\mu w_{010} - w_{110}) y_1).
%     \end{align*}

We are only left to consider the case when $T_{12}(y)$ is also of type I. 
    The two factors of all three determinants agree if and only if 
    $$\tilde{W} := \begin{pmatrix}
           F_{\bullet 0 (0,1)}  & w_{000}w_{111} + w_{010}w_{101}-w_{100}w_{011}-w_{110}w_{001} & F_{\bullet 1 (0,1)}  \\
           F_{\bullet 0 (0,2)}  & w_{000}w_{112} + w_{010}w_{102}-w_{100}w_{012}-w_{110}w_{002} & F_{\bullet 1 (0,2)}  \\
           F_{\bullet 0 (1,2)}  & w_{001}w_{112} + w_{011}w_{102}-w_{101}w_{012}-w_{111}w_{002} & F_{\bullet 1 (1,2)}  \\
            \end{pmatrix}$$ 
    has rank one. 
    One can compute that
    $$ V( 2 \text{-minors of } \tilde{W} )  = V( 3 \text{-minors of } W_I^{(3)}).$$
    Hence,  $\chi(V_I)=2$ if and only if $\rank W_I^{(3)} < 3$ and $\chi(V_I)=1$ otherwise.
\end{proof}

The last part of the above proof suggests that $\chi(Y_{W,2})$ does not depend solely on the vanishing pattern of the factors of $E_A$. Indeed, below we present a counterexample to Conjecture \ref{conj:main-conjecture}. This also shows that in general, the ML degree of independence models does not depend on the vanishing pattern of the factors of the principal $A$-determinant.

\begin{example}\label{ex: counterexample}
Consider two scaling tensors $W,W' \in (\mathbb C^{*})^{2 \times 2 \times 3}$ with 
\begin{align*}
q_0 &= x_0 y_0 + 3 x_0 y_1 + 2 x_1 y_0 + 4 x_1 y_1,\\
q_1&= 2 x_0 y_0 +  x_0 y_1 + 4 x_1 y_0 + 6 x_1 y_1,\\
q_2&= 3 x_0 y_0 +  4x_0 y_1 + 6 x_1 y_0 + 10 x_1 y_1,\\
q_2' &= 3 x_0 y_0 +  3x_0 y_1 + 6 x_1 y_0 +  x_1 y_1,
\end{align*}
where  $q_0,q_1,q_2$ are the quadrics associated to $W$ and   $q_0,q_1,q_2'$ are the quadrics associated to $W'$. For both we have 
$H_{012} = F_{0 \bullet (0,1)} = F_{0 \bullet (0,2)} = F_{0 \bullet (1,2)} = 0$,
but no other factor of $E_A$ vanishes on $W$ or $W'$. However, one can check that 
$\rank W_{\{0,1,2\}}^{(3)} =2$, while $\rank {W'}_{\{0,1,2\}}^{(3)} =3$.

Let us take a closer look at the corresponding systems $T(y) \cdot x =0$. Denote 
$$ T(y) = \begin{pmatrix}
y_0 + 3 y_1 & 2 y_2 + 4 y_1 \\
2 y_0 + y_1 & 4 y_0+6 y_1 \\
3 y_0 + 4 y_1 & 6 y_0 + 10 y_1 
\end{pmatrix}, \quad
T'(y) = \begin{pmatrix}
y_0 + 3 y_1 & 2 y_0 + 4 y_1 \\
2 y_0 + y_1 & 4 y_0+6 y_1 \\
3 y_0 + 3 y_1 & 6 y_0 +  y_1 
\end{pmatrix}
$$
the matrices corresponding to $W$ and $W'$ respectively. Then
\begin{align*}
\det T_{01}(y) = \det T'_{01}(y) = \det T_{02}(y) & = 2 y_1(4 y_0 + 7 y_1), \\
\det T_{02}(y) &= -2 y_1 (4y_0 + 7 y_1), \\
\det T'_{02}(y) &= y_1 (y_0 - 9 y_1), \\
\det T'_{12}(y) &= -y_1 (22y_0 +17 y_1)
\end{align*}
Indeed,
$\left( T \left(\begin{smallmatrix}
    7 \\
    -4
\end{smallmatrix} \right) \right)
\cdot \left( \begin{smallmatrix}
    2 \\
    -5
\end{smallmatrix} \right) = 
\left( T  \left(\begin{smallmatrix}
    1 \\
    0
\end{smallmatrix} \right) \right)
\cdot \left( \begin{smallmatrix}
    2 \\
    -1
\end{smallmatrix} \right) =
\left( T'  \left(\begin{smallmatrix}
    1 \\
    0
\end{smallmatrix} \right) \right)
\cdot \left( \begin{smallmatrix}
    2 \\
    -1
\end{smallmatrix} \right) =
0 $.
The points $(\left(\begin{smallmatrix}
    2 \\
    -5
\end{smallmatrix} \right),
\left(\begin{smallmatrix}
    7 \\
    -4
\end{smallmatrix} \right)) $ and $
( \left(\begin{smallmatrix}
    2 \\
    -1
\end{smallmatrix} \right),
\left(\begin{smallmatrix}
    1 \\
    0
\end{smallmatrix} \right))
$
are solutions to $q_0=q_1=q_2=0$ and 
$( \left(\begin{smallmatrix}
    2 \\
    -1
\end{smallmatrix} \right),
\left(\begin{smallmatrix}
    1 \\
    0
\end{smallmatrix} \right))$
is the single solution to $q_0=q_1=q_2'=0$.
Hence, $\chi(V(q_0,q_1,q_2))=2$ and $\chi(V(q_0,q_1,q_2'))=1$.
% We knew both would have at least one solution, since the $2 \times 2 \times 3$ hyperdeterminant vanishes. The fact that $q_0=q_1=q_2$ has two solutions comes from the singularity of $B_W$.
Overall, we obtain
$\chi(Y_{W,2})=-8$ and $\chi(Y_{W',2})=-9$ and thus $\mldeg(X_{W,n})=8$ and $\mldeg(X_{W',n})=9$.

%with mode-$3$ flattenings
%    \[ B_{W,\{0,1,2\}} = %\begin{pmatrix}
%1 & 3 & 2 & 4 \\
%2 & 1 & 4 & 6 \\
%3 & 4 & 6 & 10 \\
%\end{pmatrix}, \quad
%B_{W',\{0,1,2\}} = %\begin{pmatrix}
%1 & 3 & 2 & 4 \\
%2 & 1 & 4 & 6 \\
%3 & 3 & 6 & 1 \\
%\end{pmatrix}\]
%For both we have 
%$$H_{012} = F_{0 \bullet (0,1)} = F_{0 \bullet (0,2)} = F_{0 \bullet (1,2)} = 0$$
%but none of the other factors vanish. 
%Denote 
%\begin{align*}
%q_0 &= x_0 y_0 + 3 x_0 y_1 + 2 x_1 y_0 + 4 x_1 y_1,\\
%q_1&= 2 x_0 y_0 +  x_0 y_1 + 4 x_1 y_0 + 6 x_1 y_1,\\
%q_2&= 3 x_0 y_0 +  4x_0 y_1 + 6 x_1 y_0 + 10 x_1 y_1,\\
%q_2' &= 3 x_0 y_0 +  3x_0 y_1 + 6 x_1 y_0 +  x_1 y_1.\\
%\end{align*}
%The quadrics in $\mathbb P^1 \times \mathbb P^1$ associated to $W$ are $q_0,q_1,q_2$. The quadrics associated to $W'$ are $q_0,q_1,q_2'$. 
If we call the respective quadrics in $\CC^1 \times \CC^1$ as $f_0, f_1, f_2, f_2'$,
one can observe that $f_0=f_1=f_2=0$ has two solutions, one of them on the $x$-axis. In contrast, $f_0=f_1=f_2'=0$ has only one solution, which also lies on the $x$-axis. This comes from the fact that $f_2$ lies in the pencil of $f_0$ and $f_1$ while $f_2'$ does not, which again is directly linked to the ranks of the flattenings.
%\begin{align*}
%f_0 &= 1 + 3  y_1 + 2 x_1  + 4 x_1 y_1,\\
%f_1 &= 2  +  y_1 + 4 x_1 + 6 x_1 y_1,\\
%f_2 &= 3+  4 y_1 + 6 x_1 + 10 x_1 y_1,\\
%f_2' &= 3 +  3y_1 + 6 x_1 +  x_1 y_1.\\
%\end{align*}
\begin{figure}[H]
        \centering
        \includegraphics[width=0.45\linewidth]{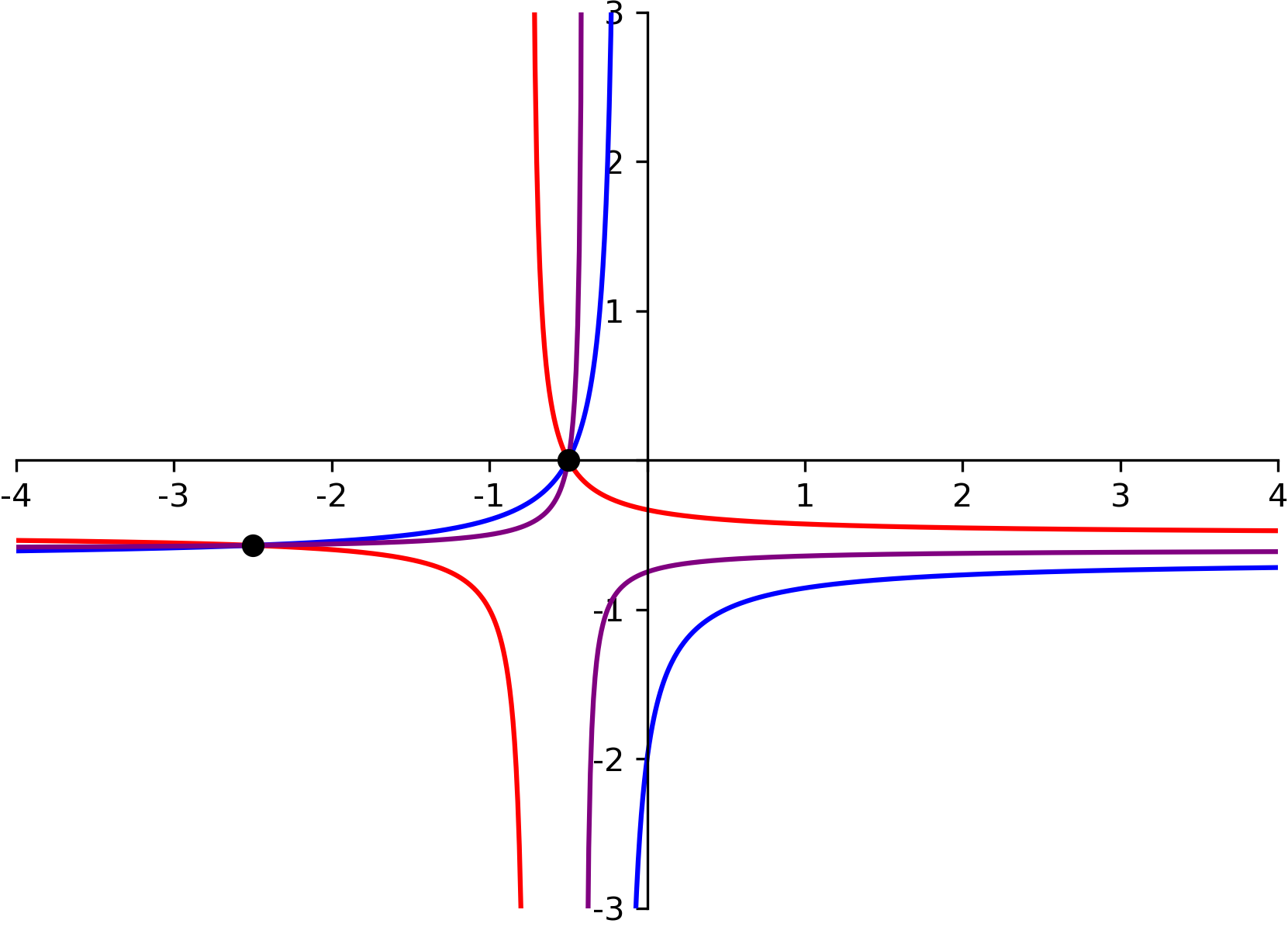}
        \includegraphics[width=0.45\linewidth]{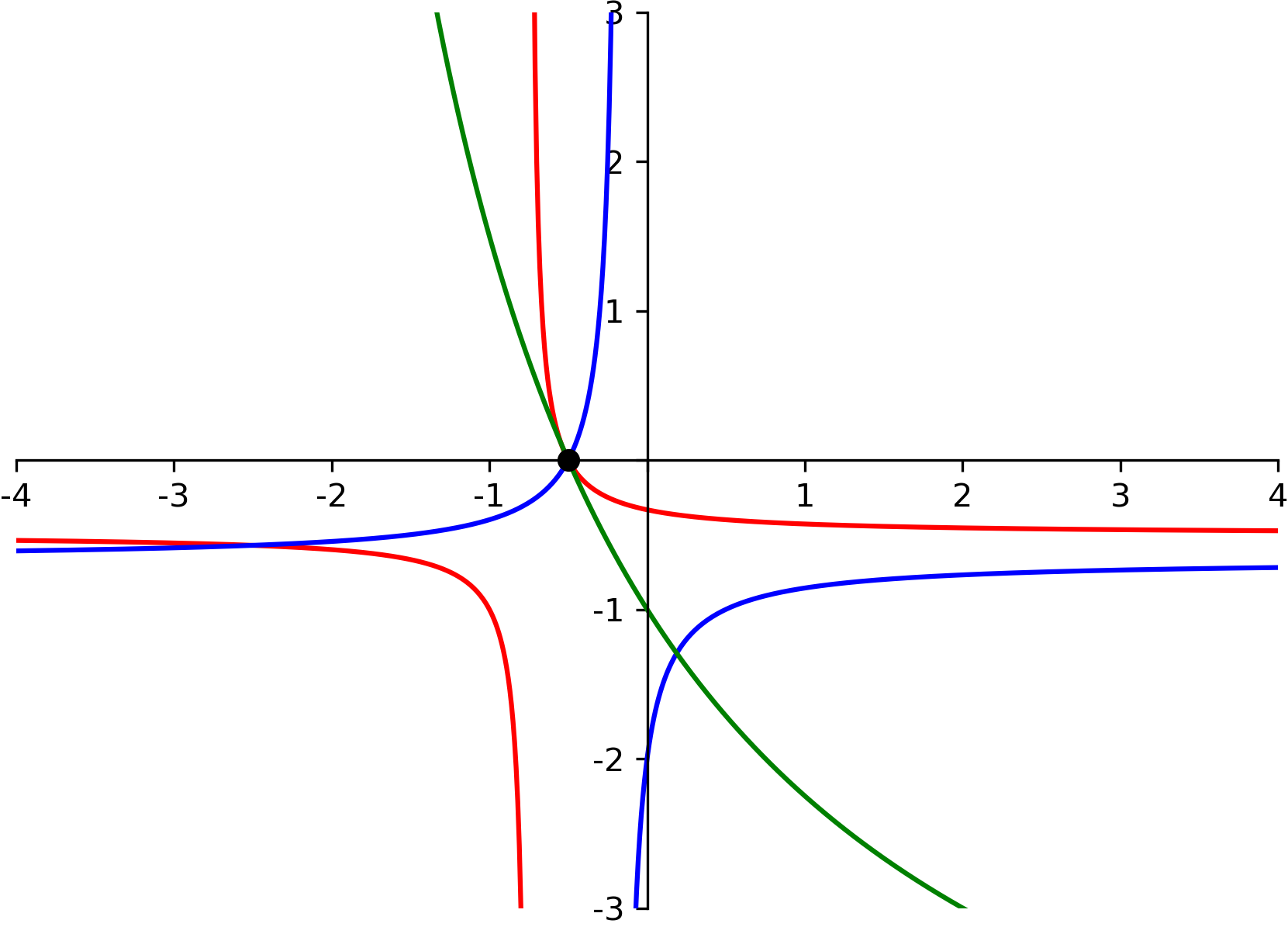}
        \caption{The red, blue, purple, and green quadrics are the vanishing loci of $f_0$, $f_1$, $f_2$, and $f_2'$, respectively. }
        \label{fig:counterex}
\end{figure}
\end{example}

% \textcolor{blue}{Write down \\
% - quadrics (projective and affine),  maybe with pictures? \\
% - explanation of how the quadrics intersect \\
% - $det (T_y)_{ij}$? \\
% - the Euler characteristic of $V(q_0,q_1,q_2)$ \\
% - the Euler charasteristic overall (i.e. the sum) \\
% - other observations that Serkan made}

\section{Euler stratification for $\PP^1 \times \PP^1 \times \PP^1$} \label{sec:P1 x P1 x P1}
In this section we present the complete Euler stratification for the hypersurface family  $Y_{W,1}$. %Our starting point is the following result.
%\begin{theorem} \label{thm:Fevola+Matsubara-Heo}
%\cite[Theorem 2.2]{FM24}
%Let $f(x,z) = f_0(x) + \sum_{k=1}^n z_k f_k(x)$ be a Laurent polynomial where  $f_0,\ldots, f_n \in \CC[x_1^{\pm}, \ldots, x_m^{\pm}]$. Then
%$$ \chi\left(V(f) \cap (\CC^*)^{m+n}\right) = (-1)^n \chi\left(V(f_0f_1\cdots f_n) \cap (\CC^*)^m\right).$$ 
%\end{theorem} 
The first part of Theorem \ref{thm:main} states that this stratification is determined only by the subsets of the factors of the principal $A$-determinant $E_A$ that vanish simultaneously on $W$. Proposition \ref{prop:factors 2x2x2} gives us the precise list of these subsets of the factors. Hence, we know the resulting $41$ strata exactly. On the other hand, we wish to present a geometric interpretation of these strata. As we will show below, the geometry is determined by the various types of ways the two quadrics $f_0$ and $f_1$ from \eqref{eq:f} intersect in $(\CC^*)^2$. The complete list of the possibilities which we determine below reproduce the $41$ strata. One advantage of this approach is the determination of all distinct realizations of the vanishing patterns we have listed in Figure \ref{fig:222_factor_relations} as A through H. For instance, scenario A, where exactly one $2$-minor vanishes, is realized up to symmetry by three different choices of pairs of quadrics with distinct intersection patterns. The figure below shows these three realizations. In all cases, the ML degree is equal to $5$.

\begin{figure}[ht] 
\centering
\captionsetup[subfigure]{labelformat=empty}

\begin{subfigure}{0.32\linewidth}
    \centering
    \includegraphics[width=\linewidth]{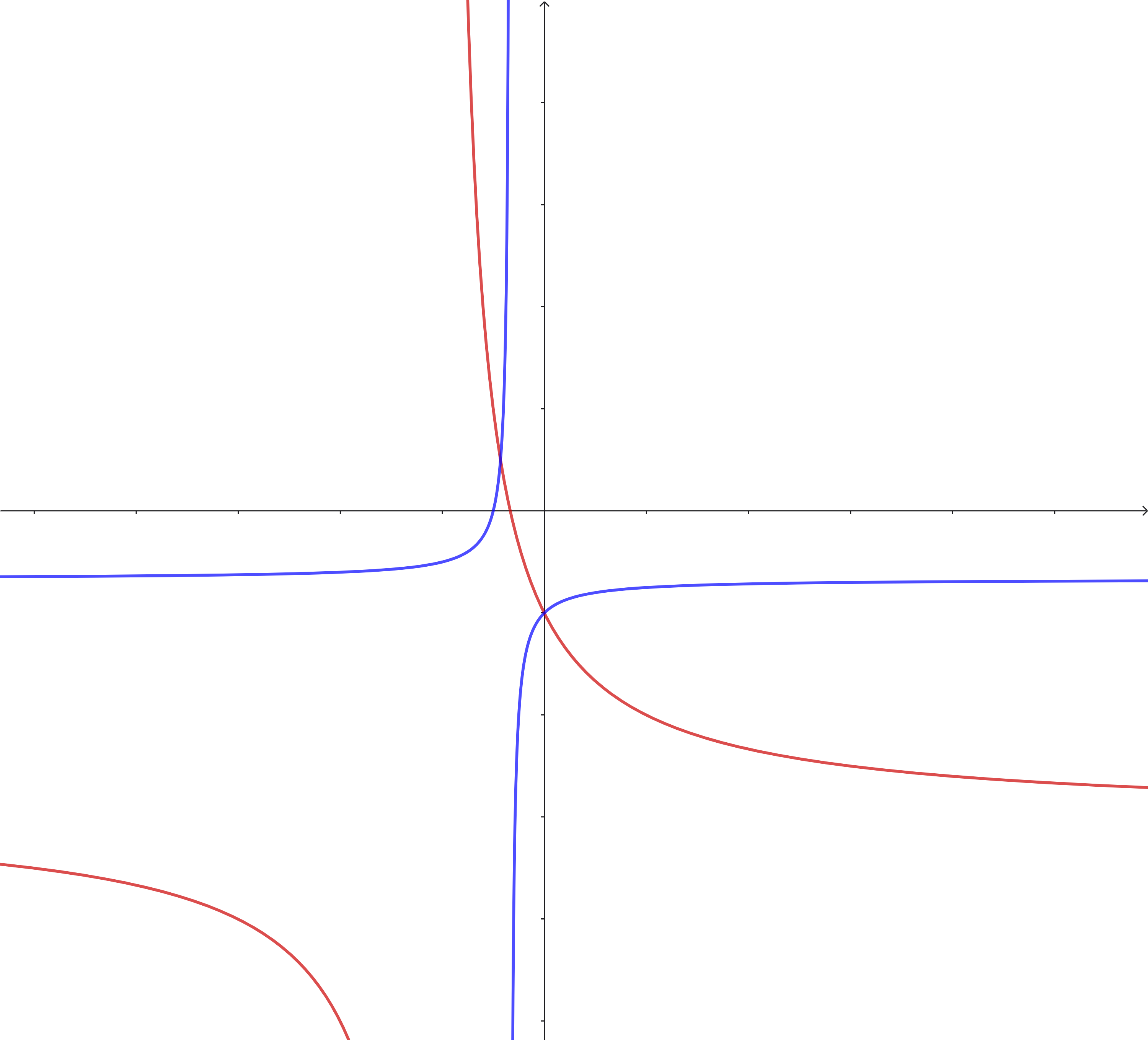}
    \caption{}
\end{subfigure}
\hfill
\begin{subfigure}{0.32\linewidth}
    \centering
    \includegraphics[width=\linewidth]{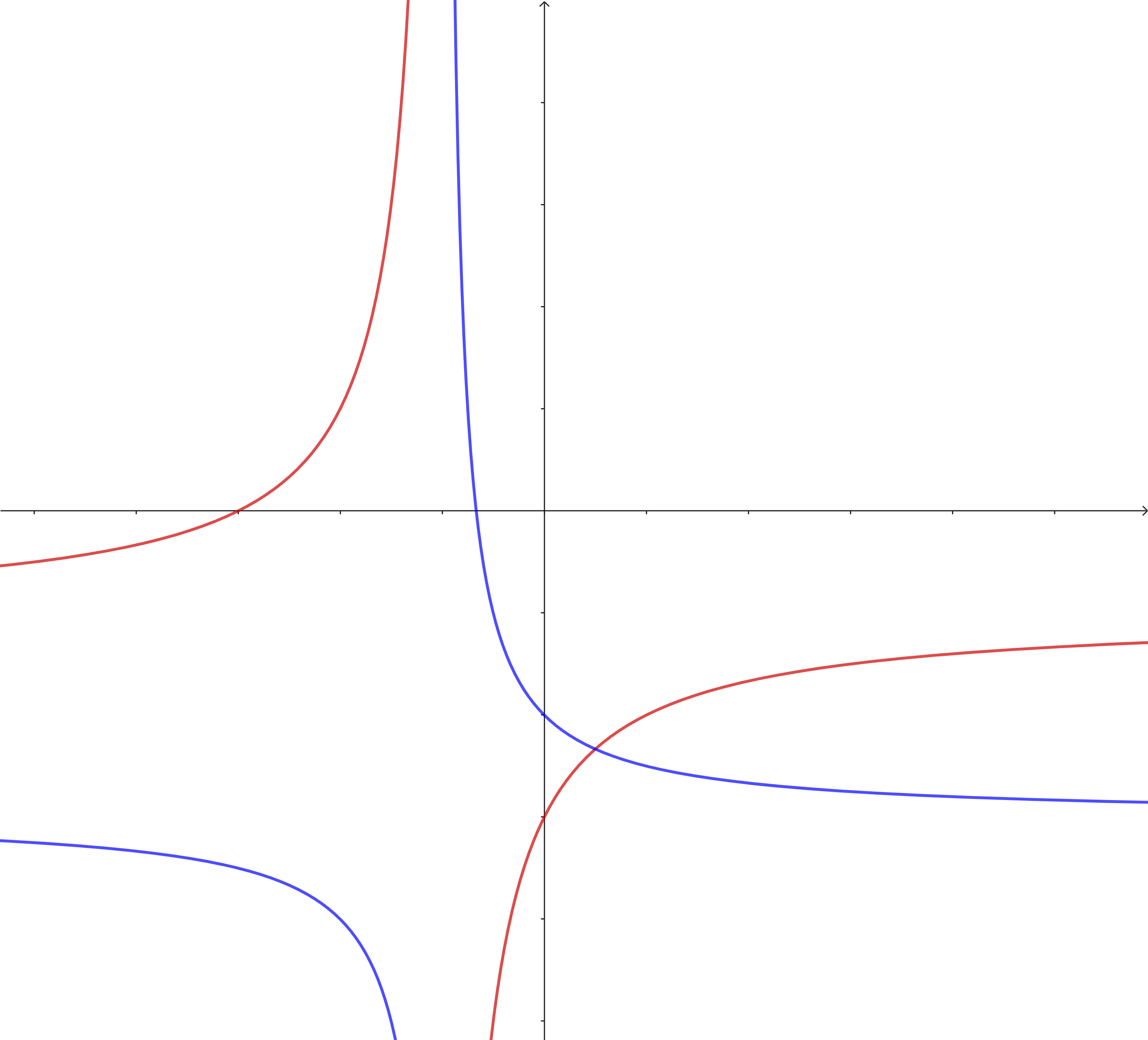}
    \caption{}
\end{subfigure}
\hfill
\begin{subfigure}{0.32\linewidth}
    \centering
    \includegraphics[width=\linewidth]{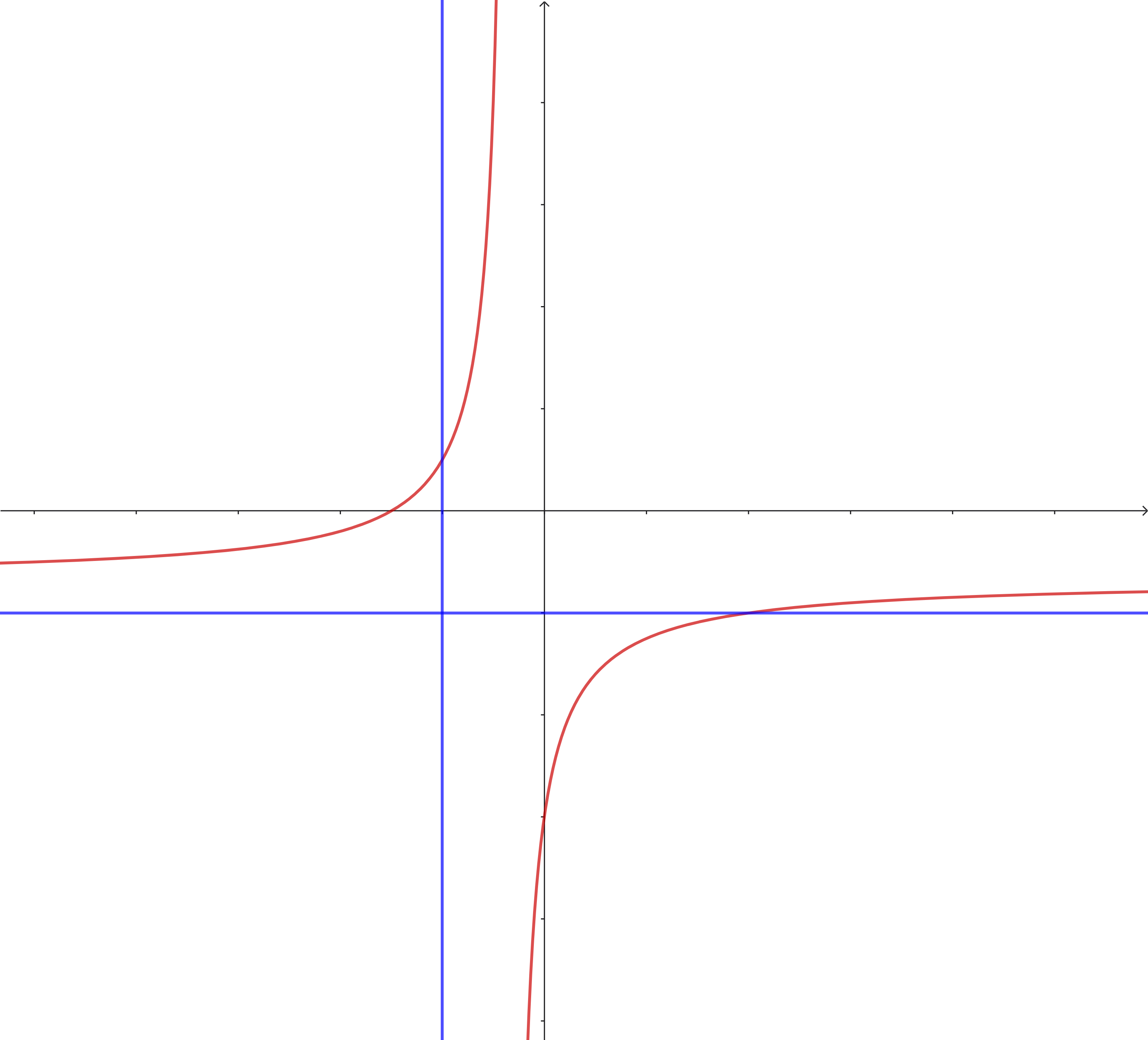}
    \caption{}
\end{subfigure}
\caption{The three possible intersection patterns described by case A.}
\end{figure}

We start with a corollary to Theorem \ref{thm:Fevola+Matsubara-Heo} relevant here.
\begin{corollary} \label{cor: quadrics}
 Let $f_W(x_1,y_1,z_1,\ldots z_n)$ be the defining polynomial of $Y_{W,n} \subset (\CC^*)^{2+n}$ and let $f_0, f_1, \ldots, f_n \in \CC[x_1^{\pm}, y_1^{\pm}]$ be as in \eqref{eq:f}. Then
 $$ \chi(Y_{W,n}) = (-1)^n \chi \left(V(f_0 f_1 \cdots f_n)\right),$$
where $V(f_0f_1\cdots f_n) = \bigcup_{i=0}^n V(f_i)$ is the union of $n+1$ quadric curves in $(\CC^*)^2$.
\end{corollary}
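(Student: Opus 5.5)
This is a direct specialization of Theorem \ref{thm:Fevola+Matsubara-Heo}. Take $m=2$, with the torus variables $x=(x_1,y_1)$, and take $f_0,\ldots,f_n\in\CC[x_1^{\pm},y_1^{\pm}]$ to be the bilinear polynomials read off from \eqref{eq:f}. With these choices the polynomial $f(x,z)=f_0+\sum_{k=1}^n z_kf_k$ is exactly $f_W$. Its zero locus $V(f_W)\cap(\CC^*)^{2+n}$ is by definition $Y_{W,n}$. The theorem then gives
$$\chi(Y_{W,n}) = (-1)^n\chi\bigl(V(f_0f_1\cdots f_n)\cap(\CC^*)^2\bigr),$$
and this is the stated identity once $V(\cdot)$ is read inside the torus $(\CC^*)^2$.

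The second claim is set-theoretic. A point of $(\CC^*)^2$ kills the product $f_0f_1\cdots f_n$ if and only if it kills one of the factors, so $V(f_0\cdots f_n)=\bigcup_{i=0}^n V(f_i)$. Each $f_i=w_{00i}+w_{10i}x_1+w_{01i}y_1+w_{11i}x_1y_1$ has all coefficients in $\CC^*$. It is therefore a nonzero, nonconstant polynomial of bidegree $(1,1)$, and its zero set in $(\CC^*)^2$ is a curve: the affine part in the torus of the quadric $V(q_i)\subset\PP^1\times\PP^1$. This justifies calling the union a union of $n+1$ quadric curves.

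There is no real obstacle. The only things to check are that the hypotheses of Theorem \ref{thm:Fevola+Matsubara-Heo} are met, namely that $f_W$ has the required Cayley form with the $z_k$ appearing linearly, and that the theorem's sign convention $(-1)^n$ matches the one stated here. Both follow by inspection of \eqref{eq:f}.
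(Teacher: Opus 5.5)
Your proposal is correct and matches the paper: the corollary is stated as an immediate consequence of Theorem \ref{thm:Fevola+Matsubara-Heo} with $m=2$ and $(x_1,y_1)$ as the torus variables, and the paper gives no argument beyond that specialization. Your additional remarks, that $V(f_0\cdots f_n)=\bigcup_i V(f_i)$ in $(\CC^*)^2$ and that each $V(f_i)$ is a curve there, are correct bookkeeping.
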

\noindent For $\PP^1 \times \PP^1 \times \PP^1$
we have two quadrics $Q_0$ and $Q_1$ in $(\CC^*)^2$ defined by 
$$ f_0 = w_{000} + w_{100}x + w_{010}y + w_{110} xy \,\, \mbox{   and    }  \,\,
f_1 = w_{001} + w_{101}x + w_{011}y + w_{111} xy.$$ 
\noindent Therefore, we need to characterize $\chi(Q_0 \cup Q_1)$ as 
$W \in (\CC^*)^8$ varies. 
\begin{lemma}
 Let $g(x,y) = a + bx + cy + dxy$ where $a,b,c,d \in \CC^*$. Then the quadric $Q = V(g) \subset \CC^2$ 
 is nonsingular if and only if $ad -bc \neq 0$. In this situation, $Q$ is a hyperbola and intersects the coordinate axes in two points, namely, in $(0,-a/c)$ and $(-a/b, 0)$. The Euler characteristic of $Q$ in $(\CC^*)^2$ is $\chi(Q) = -2$. If $Q$ is singular, it is the union of the lines $x=-\frac{c}{d}$ and $y=-\frac{b}{d}$ that are parallel to the axes. In this case the Euler characteristic of $Q$ in $(\CC^*)^2$ 
 is $\chi(Q) = -1$.
\end{lemma}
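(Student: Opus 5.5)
The plan is a direct computation, in three steps: locate the singularities of $Q$, describe the curve in each case, and then count Euler characteristics after removing the coordinate axes.

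\emph{Singular points.} I would first solve the system $g = g_x = g_y = 0$. The partial derivatives are $g_x = b + dy$ and $g_y = c + dx$. Since $d \neq 0$, their only common zero is $(x,y) = (-c/d,\,-b/d)$. Substituting this point gives
\[ g(-c/d,-b/d) = a - \tfrac{bc}{d} - \tfrac{bc}{d} + \tfrac{bc}{d} = \tfrac{ad-bc}{d}. \]
So $Q$ is singular exactly when $ad - bc = 0$. The same conclusion also follows from Lemma \ref{lem: one quadric} with $W_{\bullet\bullet k}$ replaced by $\left(\begin{smallmatrix} a & c\\ b & d\end{smallmatrix}\right)$.

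\emph{Description of $Q$.} I would use the identity
\[ d\,g = (dx + c)(dy + b) + (ad - bc). \]
In the singular case the constant term vanishes, so $Q$ is the union of the lines $x = -c/d$ and $y = -b/d$. In the nonsingular case $Q$ is the hyperbola $(dx+c)(dy+b) = \mathrm{const} \neq 0$. Setting $u = dx + c$ gives an isomorphism $Q \cong \CC^*$ via $u \mapsto \bigl(x,\ y(u)\bigr)$ with $y = (\mathrm{const}/u - b)/d$. The intersections with the axes come from substitution: $x=0$ forces $y = -a/c$, and $y = 0$ forces $x = -a/b$. Both points exist because $a,b,c \in \CC^*$, and each is a single point.

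\emph{Euler characteristics in $(\CC^*)^2$.} In the nonsingular case, $Q \cap (\CC^*)^2$ is $\CC^*$ with the two points $(0,-a/c)$ and $(-a/b,0)$ removed; these are distinct because their coordinates differ. Hence $\chi = 0 - 2 = -2$. In the singular case, each line meets $(\CC^*)^2$ in a copy of $\CC^*$, because $-c/d$ and $-b/d$ are nonzero. The two lines meet in the single point $(-c/d,-b/d)$, which lies in the torus. By inclusion–exclusion,
\[ \chi = 0 + 0 - 1 = -1. \]

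The only step that needs care is checking that the removed axis points are distinct and actually lie on $Q$, and that the intersection point of the two lines lies in the torus. Both follow immediately from $a,b,c,d \in \CC^*$, so I do not expect a genuine obstacle.
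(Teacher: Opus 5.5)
Your proof is correct and complete. Its outline matches the paper's (singularity criterion, factorization in the degenerate case, Euler characteristic count), but the tools differ.

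The paper writes $g = [x\ y\ 1]\,M\,[x\ y\ 1]^T$, with $M$ the symmetric $3\times 3$ matrix of the conic. It reads off nonsingularity from $\det M = \tfrac{1}{4} d(bc-ad)$ and factors $g = d(x+\tfrac{c}{d})(y+\tfrac{b}{d})$ when this vanishes. The Euler characteristics are then said to ``follow easily.''

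You instead apply the Jacobian criterion directly to the affine curve, together with the single identity $dg = (dx+c)(dy+b)+(ad-bc)$. This identity gives the paper's factorization when $ad=bc$, and an explicit isomorphism $Q\cong\CC^*$ when $ad\neq bc$.

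Your route buys two things:
\begin{itemize}
\item $\det M$ only detects degeneracy of the projective closure in $\PP^2$. In the paper it is the subsequent factorization that places the singular point at the affine point $(-c/d,-b/d)$. Your Jacobian computation gets the affine criterion in one step.
\item Your normal form supplies exactly the topological input the paper leaves to the reader. In the smooth case $Q\cap(\CC^*)^2$ is $\CC^*$ minus two distinct axis points. In the singular case it is two copies of $\CC^*$ meeting in one torus point.
\end{itemize}
The paper's route is the quicker, classical one-line criterion.

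Your aside deriving the criterion from the earlier lemma $\chi(V_{\{k\}}) = 4-\mathrm{rank}\,W_{\bullet\bullet k}$ is not really supported by that lemma's statement. The lemma concerns the curve in $\PP^1\times\PP^1$, and the factorization appears only implicitly in its proof. Nothing in your argument depends on that aside.
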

\begin{proof}
  Observe that 
  $$ g(x,y) = \begin{bmatrix} x &  y &  1\end{bmatrix} \begin{bmatrix} 0 & \frac{1}{2}d & \frac{1}{2}b \\ \frac{1}{2}d & 0 & \frac{1}{2} c \\ \frac{1}{2} b & \frac{1}{2} c & a \end{bmatrix} \begin{bmatrix} x \\ y \\ 1\end{bmatrix}.$$
  Then, $Q$ is nonsingular if and only if the middle matrix is nonsingular. The determinant of that matrix is $\frac{1}{4} d (bc - ad)$. If this determinant is zero, $g$ factors as $g = d(x+\frac{c}{d})(y+\frac{b}{d})$. The computation of Euler characteristics follows easily.
\end{proof}
\begin{corollary}
 $Q_0$ is nonsingular if and only if $F_{\bullet \bullet 0} = \det(W_{\bullet \bullet 0}) \neq 0$. Similarly, $Q_1$ is nonsingular if and only if $F_{\bullet \bullet 1} = \det(W_{\bullet \bullet 1}) \neq 0$.   
\end{corollary}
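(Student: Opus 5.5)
The plan is to apply the preceding lemma directly to $g = f_0$ and then to $g = f_1$. For $f_0$ we set $a = w_{000}$, $b = w_{100}$, $c = w_{010}$, $d = w_{110}$. For $f_1$ we use the analogous coefficients with last index $1$.

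Since $W \in (\CC^*)^8$, all four coefficients $a,b,c,d$ lie in $\CC^*$, so the hypothesis of the lemma is satisfied. The lemma then tells us that $Q_k$ is nonsingular if and only if $ad - bc \neq 0$.

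It remains to match $ad-bc$ with a slice determinant. Recall that
$$W_{\bullet\bullet k} = \begin{pmatrix} w_{00k} & w_{01k} \\ w_{10k} & w_{11k}\end{pmatrix},$$
so that
$$F_{\bullet\bullet k} = w_{00k}w_{11k} - w_{01k}w_{10k}.$$
Under our identification this equals $ad - cb$, which is exactly the quantity in the lemma. Hence $Q_0$ is nonsingular if and only if $F_{\bullet\bullet 0}\neq 0$, and likewise $Q_1$ is nonsingular if and only if $F_{\bullet\bullet 1}\neq 0$.

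One caveat concerns where nonsingularity is measured. The lemma speaks of $Q \subset \CC^2$, while the curves $Q_k$ live in $(\CC^*)^2$. The singular point in the degenerate case is the intersection of the lines $x=-c/d$ and $y=-b/d$, that is, the point $(-c/d,-b/d)$, and this lies in the torus because $b,c,d\in\CC^*$. So the equivalence holds equally well in $(\CC^*)^2$.

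There is no genuine obstacle here. The only care needed is to match the index convention, namely that $w_{010}$ is the coefficient of $y$ and $w_{100}$ is the coefficient of $x$. This is harmless in any case, because $ad-bc$ is symmetric under exchanging $b$ and $c$.
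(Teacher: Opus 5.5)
Your proposal is correct and matches the paper. The paper gives no separate proof and treats the statement as an immediate consequence of the preceding lemma, with $(a,b,c,d)=(w_{00k},w_{10k},w_{01k},w_{11k})$ so that $ad-bc=F_{\bullet\bullet k}$. Your extra check that the singular point $(-c/d,-b/d)$ lies in $(\CC^*)^2$ is a worthwhile detail the paper leaves implicit.
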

By Theorem \ref{thm:main}, we will compute 
$$\chi(Y_{W,1}) = - \chi(Q_0 \cup Q_1) = - \left(\chi(Q_0) + \chi(Q_1) - \chi(Q_0 \cap Q_1)\right)$$ based only on the possible vanishing patterns of factors of $E_A$. We will determine these possible configurations throughout this section and we will study the effect of these vanishing patterns on $\chi(Q_0)$, $\chi(Q_1)$, and $\chi(Q_0\cap Q_1)$. We use the same notation as in the subsection for $\PP^1 \times \PP^1 \times \PP^1$ in Section \ref{sec:factors}. 

\subsection{Both $Q_0$ and $Q_1$  nonsingular}
Here we are assuming that $F_{\bullet \bullet 0}$ and $F_{\bullet \bullet 1}$, the determinants of the two slices $W_{\bullet \bullet 0}$ and $W_{\bullet \bullet 1}$, are not zero. This means that $\chi(Q_0) = \chi(Q_1) = -2$, and we need to determine $\chi(Q_0 \cap Q_1)$. Hence we need to determine all the possible ways $Q_0$ and $Q_1$ intersect in $(\CC^*)^2$.

Consider 
\begin{equation} \label{eq:combine f_0 and f_1:}
w_{111} f_0 - w_{110}f_1 \, = \, \det(W_{1 \bullet \bullet})x + \det(W_{\bullet 1 \bullet})y + (w_{000}w_{111} - w_{110}w_{001}).
\end{equation}
If $F_{\bullet 1 \bullet} \neq 0$, we can solve for $y$ in \eqref{eq:combine f_0 and f_1:}. Plugging this into $f_0$ gives
$$ G_1(x) = \det(W_{1 \bullet \bullet}) x^2 + [(w_{000}w_{111} - w_{110}w_{001}) - (w_{010}w_{101} -w_{100}w_{011})]x + \det(W_{0 \bullet \bullet}) = 0.$$
Similarly, if $F_{1 \bullet \bullet} \neq 0$, we can solve for $x$ in \eqref{eq:combine f_0 and f_1:}. Plugging this into $f_0$ gives
$$ G_2(y) = \det(W_{\bullet  1 \bullet}) y^2 + [(w_{000}w_{111} - w_{110}w_{001}) + (w_{010}w_{101} -w_{100}w_{011})]y + \det(W_{\bullet  0 \bullet}) = 0.$$
The discriminants of both $G_1(x)$ and $G_2(y)$ are equal to the hyperdeterminant $H$. 
\begin{lemma} \label{lem:two intersection points}
The two nonsingular quadrics $Q_0$ and $Q_1$ intersect in two distinct points in $\CC^2$ if $F_{\bullet 1 \bullet} \neq 0$,  
$F_{1 \bullet \bullet} \neq 0$, and $H \neq 0$. In this case, they intersect in a point where $x=0$ if and only if $F_{0 \bullet \bullet} = 0$. Similarly, they intersect in a point where $y=0$ if and only if $F_{\bullet 0  \bullet} = 0$. 
\end{lemma}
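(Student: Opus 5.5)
The plan is to reduce the intersection $Q_0 \cap Q_1$ to the roots of the quadratic $G_1$, and then read off the axis intersections from the constant term of $G_1$ (and symmetrically of $G_2$).

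\textbf{Step 1: reduction to one variable.} Since $F_{\bullet 1 \bullet} \neq 0$, the linear combination \eqref{eq:combine f_0 and f_1:} can be solved for $y$ as an affine function of $x$. Because $w_{111} \neq 0$, the ideal $\langle f_0, f_1\rangle$ equals $\langle f_0,\, w_{111}f_0 - w_{110}f_1\rangle$. Hence the points of $Q_0 \cap Q_1$ in $\CC^2$ are in bijection with the roots $x$ of $G_1(x) = 0$, where $G_1$ is obtained by substituting $y(x)$ into $f_0$ and clearing the nonzero factor $F_{\bullet 1\bullet}$. Each root $x$ gives exactly one point $(x, y(x))$.

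\textbf{Step 2: exactly two points.} The leading coefficient of $G_1$ is $F_{1 \bullet \bullet} \neq 0$, so $G_1$ is a genuine quadratic. Its discriminant is $H \neq 0$, as noted before the lemma, so $G_1$ has exactly two distinct roots. Therefore $Q_0 \cap Q_1$ consists of exactly two distinct points of $\CC^2$. I would double-check the expansion of $G_1$ against the stated formula, in particular the middle coefficient. The discriminant identity is the one computation I would verify by hand; if needed, it can be cross-checked against the $H_{ij}$ formula with $k_1 = 0$, $k_2 = 1$.

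\textbf{Step 3: intersection on $x = 0$.} An intersection point has $x = 0$ if and only if $0$ is a root of $G_1$, that is, if and only if the constant term $F_{0\bullet\bullet}$ vanishes. Concretely, at $x = 0$ the equations become $w_{000} + w_{010}y = 0$ and $w_{001} + w_{011}y = 0$. These have a common solution exactly when $w_{000}w_{011} - w_{010}w_{001} = 0$, which is $F_{0\bullet\bullet} = 0$ up to sign. This direct check confirms the constant term without relying on the full expansion.

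\textbf{Step 4: intersection on $y = 0$.} Symmetrically, since $F_{1\bullet\bullet} \neq 0$, I would solve \eqref{eq:combine f_0 and f_1:} for $x$ and use $G_2(y)$. Its constant term is $F_{\bullet 0 \bullet}$, and the direct check at $y = 0$ gives $w_{000}w_{101} - w_{100}w_{001} = 0$, which is $F_{\bullet 0\bullet} = 0$. The main obstacle is only the bookkeeping of the coefficient expansions and signs. The iff statements themselves follow cleanly from the direct substitutions at $x = 0$ and $y = 0$, combined with the bijection established in Step 1.
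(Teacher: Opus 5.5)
Your proposal is correct and follows essentially the same route as the paper: eliminate $y$ using the linear combination $w_{111}f_0-w_{110}f_1$, get two distinct roots of $G_1$ from its leading coefficient $F_{1\bullet\bullet}\neq 0$ and discriminant $H\neq 0$, and detect the points on the axes through the constant terms $F_{0\bullet\bullet}$ of $G_1$ and $F_{\bullet 0\bullet}$ of $G_2$; your direct substitution at $x=0$ and $y=0$ is a useful independent check of those constant terms. The only slip is cosmetic: the equality $\langle f_0,f_1\rangle=\langle f_0,\,w_{111}f_0-w_{110}f_1\rangle$ needs $w_{110}\neq 0$ rather than $w_{111}\neq 0$, and so does the nonzero scalar $-w_{110}/F_{\bullet 1\bullet}$ relating $f_0(x,y(x))$ to $G_1(x)$; this is harmless since all entries of $W$ lie in $\CC^*$.
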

\begin{proof}
If the three polynomials do not vanish, then $G_1(x)$ (and $G_2(y)$) has two distinct roots $x_1$ and $x_2$, and one can solve for the $y$ coordinate of the two intersection points using \eqref{eq:combine f_0 and f_1:}. Under these conditions, $G_1(x) =0$ implies that one its roots will be $0$ if and only if $\det(W_{0 \bullet \bullet}) = F_{0 \bullet \bullet} =0$. 
Similarly, $G_2(y) =0$ implies that one its roots will be $0$ if and only if $\det(W_{\bullet 0 \bullet}) = F_{ \bullet 0 \bullet} =0$.
\end{proof}
\subsubsection*{\underline{$F_{\bullet 1 \bullet} \neq 0$, $F_{1 \bullet \bullet} \neq 0$, and $H \neq 0$}}
There are only two factors left to consider and the following captures the implications of their vanishings.
\begin{proposition} \label{prop: two intersection points}
Suppose $Q_0$ and $Q_1$ are nonsingular quadrics and $F_{\bullet 1 \bullet} \neq 0$, $F_{1 \bullet \bullet} \neq 0$, and $H \neq 0$. 
\begin{itemize}
    \item If neither $F_{0 \bullet \bullet}$ nor $F_{\bullet 0 \bullet}$ vanishes, then $\chi(Y_{W,1}) = 6$.
    \item If exactly one of $F_{0 \bullet \bullet}$ and $F_{\bullet 0 \bullet}$ vanishes, then $\chi(Y_{W,1}) = 5$.
    \item If both $F_{0 \bullet \bullet}$ and $F_{\bullet 0 \bullet}$ vanish, then $\chi(Y_{W,1}) = 4$.
\end{itemize}
\end{proposition}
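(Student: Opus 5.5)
We compute $\chi(Y_{W,1}) = -\left(\chi(Q_0)+\chi(Q_1)-\chi(Q_0\cap Q_1)\right)$ directly, where all Euler characteristics are taken in $(\CC^*)^2$ (Corollary \ref{cor: quadrics} with $n=1$). Since $Q_0$ and $Q_1$ are nonsingular, $\chi(Q_0)=\chi(Q_1)=-2$ by the lemma on $g(x,y)=a+bx+cy+dxy$. Hence $\chi(Y_{W,1}) = 4 + \chi(Q_0\cap Q_1)$. Because $Q_0\cap Q_1$ is a finite set of points, $\chi(Q_0\cap Q_1)$ is just the number of intersection points lying in $(\CC^*)^2$. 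The whole proof therefore reduces to counting these points.

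By Lemma \ref{lem:two intersection points}, the hypotheses $F_{\bullet 1\bullet}\neq 0$, $F_{1\bullet\bullet}\neq 0$, $H\neq 0$ give exactly two distinct intersection points in $\CC^2$. Their $x$-coordinates are the two roots of $G_1$ and their $y$-coordinates are the roots of $G_2$. The $y$-coordinate is recovered linearly from \eqref{eq:combine f_0 and f_1:}, so the pairing of roots is determined. A point fails to lie in the torus exactly when one of its coordinates is $0$. By the same lemma, some intersection point has $x=0$ iff $F_{0\bullet\bullet}=0$, and some point has $y=0$ iff $F_{\bullet 0\bullet}=0$. Since $G_1$ has nonzero discriminant $H$, at most one root is $0$, so at most one point lies on $\{x=0\}$, and likewise at most one lies on $\{y=0\}$.

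The step needing care is when both $F_{0\bullet\bullet}$ and $F_{\bullet 0\bullet}$ vanish: we must rule out that a single point is the origin, which would leave one point in the torus rather than zero. We would check that the origin never lies on $Q_0$, since $f_0(0,0)=w_{000}\neq 0$. Hence the point with $x=0$ and the point with $y=0$ are distinct, and both points are removed.

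Counting then gives the result. If neither minor vanishes, both points lie in the torus, so $\chi(Q_0\cap Q_1)=2$ and $\chi(Y_{W,1})=6$. If exactly one minor vanishes, one point is removed, so $\chi(Y_{W,1})=5$. If both vanish, no point remains, so $\chi(Y_{W,1})=4$.

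The main thing to double-check is the sign convention: with $n=1$, $\chi(Y_{W,1})=-\chi(Q_0\cup Q_1)$. The value $6$ must match $\deg\mathcal X_1=6$ via Proposition \ref{prop:ML degree = Euler char}, which gives $\mldeg=(-1)^{2}\chi=\chi$, so the signs are consistent.
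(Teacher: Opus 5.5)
Your proof is correct and matches the paper's argument: you reduce to $\chi(Y_{W,1}) = 4 + \chi(Q_0\cap Q_1)$, where $\chi(Q_0\cap Q_1)$ is the number of intersection points in $(\CC^*)^2$, and then count those points using Lemma \ref{lem:two intersection points}. Your two extra checks are worth keeping, since the paper's proof takes both for granted: $H\neq 0$ allows at most one root of $G_1$ (resp.\ $G_2$) to vanish, and $f_0(0,0)=w_{000}\neq 0$ means the point with $x=0$ and the point with $y=0$ are distinct.
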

\begin{proof}
By Lemma \ref{lem:two intersection points}, the quadrics intersect in exactly two points in $(\CC^*)^2$ if the constant terms of $G_1(x)$ and $G_2(y)$ do not vanish. In this case, $\chi(Q_0 \cap Q_1)= 2$ and hence
$\chi(Q_0 \cup Q_1) = -6$. If exactly one of these constant terms vanish, exactly one of these points is on a coordinate axis. This means $\chi(Q_0 \cap Q_1)= 1$ and hence $\chi(Q_0 \cup Q_1) = -5$. If both constant terms are zero, both intersection points are on the coordinate axes. Therefore, $\chi(Q_0 \cap Q_1)= 0$ and  $\chi(Q_0 \cup Q_1) = -4$.
\end{proof}
We point out that the second case above, where $\chi(Y_{W,1})=5$, corresponds to exactly one of the minors of $W$ vanishing. This is the case A in Figure \ref{fig:222_factor_relations}. The third case, where $\chi(Y_{W,1}) = 4$, corresponds to exactly two minors which form a hook vanishing. This is the case C in Figure \ref{fig:222_factor_relations}.

\subsubsection*{\underline{$F_{\bullet 1 \bullet} \neq 0$, $F_{1 \bullet \bullet} \neq 0$, and $H = 0$}}
The vanishing of the hyperdeterminant implies that the quadrics $Q_0$ and $Q_1$ are tangent to each other.
\begin{lemma} \label{lem:tangency}
Suppose the quadrics $Q_0$ and $Q_1$ are nonsingular and $F_{\bullet 1 \bullet} \neq 0$ and $F_{1 \bullet \bullet} \neq 0$. If the hyperdeterminant vanishes, $Q_0$ and $Q_1$ intersect in $\CC^2$ in a single point of multiplicity two. Furthermore,
\begin{enumerate}
\item If neither $F_{0 \bullet \bullet}$ nor $F_{\bullet 0 \bullet}$ vanishes, then the unique intersection point is in $(\CC^*)^2$.
    \item If exactly one of $F_{0 \bullet \bullet}$ and $F_{\bullet 0 \bullet}$ vanishes, then the unique intersection point is on one of the coordinate axes. 
%    \item If both $F_{0 \bullet \bullet}$ and $F_{\bullet 0 \bullet}$ vanish, then the quadrics $Q_0$ and $Q_1$ coincide. 
\end{enumerate}
\end{lemma}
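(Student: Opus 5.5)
The plan is to reuse the elimination set up just before Lemma \ref{lem:two intersection points}. Since $F_{\bullet 1 \bullet} \neq 0$, the linear combination \eqref{eq:combine f_0 and f_1:} can be solved for $y$ as an affine function of $x$. Hence every intersection point of $Q_0$ and $Q_1$ in $\CC^2$ lies on this line. Conversely, substituting into $f_0$ gives $G_1(x)=0$. So the intersection points in $\CC^2$ correspond bijectively, with multiplicity, to the roots of $G_1$, each lifted via \eqref{eq:combine f_0 and f_1:}.

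The key steps are as follows.

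\textbf{Step 1.} The leading coefficient of $G_1$ is $F_{1\bullet\bullet}\neq 0$, so $G_1$ is a genuine quadratic. Its discriminant is $H$, as noted above. When $H=0$, $G_1$ therefore has a single double root $x^*$, and solving \eqref{eq:combine f_0 and f_1:} gives a unique point $(x^*,y^*)$. To see that this is a single intersection point of multiplicity two, I would argue that the intersection multiplicity along the line equals the root multiplicity of $G_1$. This holds because the ideal $(f_0,f_1)$ equals $(f_0, \ell)$, where $\ell$ is the linear form \eqref{eq:combine f_0 and f_1:}; this uses $w_{111}\neq 0$. Restricting $f_0$ to $\ell=0$ then gives $G_1$ up to a nonzero constant.

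\textbf{Step 2.} Next, locate $(x^*,y^*)$ relative to the axes. Since $x^*$ is a root of $G_1$, we have $x^*=0$ if and only if the constant term $F_{0\bullet\bullet}$ vanishes. Symmetrically, $F_{1\bullet\bullet}\neq 0$ lets us eliminate $x$ instead, producing $G_2(y)$ with leading coefficient $F_{\bullet 1\bullet}\neq 0$ and discriminant $H$. Its double root is $y^*$, the $y$-coordinate of the same unique intersection point. Thus $y^*=0$ if and only if $F_{\bullet 0\bullet}=0$.

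\textbf{Step 3.} Items (1) and (2) now follow directly. If neither constant term vanishes, then $x^*,y^*\neq 0$ and the point lies in $(\CC^*)^2$. If exactly one vanishes, then exactly one coordinate is zero, so the point lies on one coordinate axis.

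The only delicate point is Step 1: identifying the double root of $G_1$ with a genuine tangency of multiplicity two, rather than merely a single set-theoretic point. I would handle this with the ideal equality $(f_0,f_1)=(f_0,\ell)$ described above, together with the fact that $\ell$ is a line transversal to neither axis degenerately, since both of its coefficients $F_{1\bullet\bullet}$ and $F_{\bullet 1\bullet}$ are nonzero.
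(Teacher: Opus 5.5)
Your proposal is correct and is essentially the paper's proof: use $\ell = w_{111}f_0 - w_{110}f_1$ to eliminate one variable, observe that $G_1$ and $G_2$ are genuine quadratics (their leading coefficients $F_{1\bullet\bullet}$ and $F_{\bullet 1\bullet}$ are nonzero) with discriminant $H$ and hence have a double root, and read off the position relative to the axes from their constant terms $F_{0\bullet\bullet}$ and $F_{\bullet 0\bullet}$. Your justification of multiplicity two via $(f_0,f_1)=(f_0,\ell)$ usefully makes explicit what the paper leaves implicit; one small correction is that recovering $f_1$ from $f_0$ and $\ell$ uses $w_{110}\neq 0$ rather than $w_{111}\neq 0$, which is harmless since all entries of $W$ are nonzero.
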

\begin{proof} Under the hypotheses, $G_1(x)$ and $G_2(y)$ have a double root. This means that $Q_0$ and $Q_1$ intersect in a single point of tangency. If the constant terms of $G_1(x)$ and $G_2(y)$, namely, $F_{0\bullet \bullet}$ and $F_{\bullet 0 \bullet}$ do not vanish, this intersection point is in $(\CC^*)^2$.
If exactly one of them vanishes, then $x=0$ is a double root of $G_1(x)$ or 
$y=0$ is a double root of $G_2(y)$.
\end{proof}
The alert reader would realize that we have omitted the case when  
$F_{0 \bullet \bullet} = F_{\bullet 0 \bullet} =0$. This corresponds to the case when the minors in a hook and the hyperdeterminant vanish. However, this forces the minors in the cubic frame containing the hook all vanish. The extra minors are $F_{\bullet 1 \bullet}$ and $F_{1 \bullet \bullet}$ which we assumed to be nonzero. 
\begin{proposition} \label{prop:tangency}
Suppose $Q_0$ and $Q_1$ are nonsingular quadrics and $F_{\bullet 1 \bullet} \neq 0$, $F_{1 \bullet \bullet} \neq 0$, and $H = 0$. 
\begin{itemize}
    \item If neither $F_{0 \bullet \bullet}$ nor $F_{\bullet 0 \bullet}$ vanishes, then $\chi(Y_{W,1}) = 5$.
    \item If exactly one of $F_{0 \bullet \bullet}$ and $F_{\bullet 0 \bullet}$ vanishes, then $\chi(Y_{W,1}) = 4$.
\end{itemize}    
\end{proposition}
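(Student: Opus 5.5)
We argue exactly as in the proof of Proposition \ref{prop: two intersection points}, using Corollary \ref{cor: quadrics} with $n=1$:
$$\chi(Y_{W,1}) = -\chi(Q_0\cup Q_1) = -\bigl(\chi(Q_0)+\chi(Q_1)-\chi(Q_0\cap Q_1)\bigr).$$
Since $F_{\bullet\bullet 0}\neq 0$ and $F_{\bullet\bullet 1}\neq 0$, both quadrics are nonsingular hyperbolas, so $\chi(Q_0)=\chi(Q_1)=-2$. Hence $\chi(Y_{W,1}) = 4+\chi(Q_0\cap Q_1)$, where $Q_0\cap Q_1$ is taken in $(\CC^*)^2$. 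The whole task is therefore to compute $\chi(Q_0\cap Q_1)$, which is just the number of intersection points lying in the torus, provided the intersection is finite.

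First we need finiteness. Under $F_{\bullet 1\bullet}\neq 0$ and $F_{1\bullet\bullet}\neq 0$, the combination \eqref{eq:combine f_0 and f_1:} is a genuine line, not parallel to either axis. Every common point of $Q_0$ and $Q_1$ in $\CC^2$ lies on this line. Substituting the line into $f_0$ gives the quadratic $G_1(x)$, whose leading coefficient $F_{1\bullet\bullet}$ is nonzero. Thus $Q_0\cap Q_1\subset\CC^2$ is finite, and its points correspond bijectively to the distinct roots of $G_1$. The same holds for $G_2(y)$.

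Now we apply Lemma \ref{lem:tangency}, since $H=0$ is the discriminant of $G_1$ and $G_2$. The intersection in $\CC^2$ is a single point of multiplicity two.
\begin{itemize}
\item If neither $F_{0\bullet\bullet}$ nor $F_{\bullet 0\bullet}$ vanishes, this point lies in $(\CC^*)^2$. Then $\chi(Q_0\cap Q_1)=1$ and $\chi(Y_{W,1})=5$.
\item If exactly one of them vanishes, then $0$ is the double root of $G_1$ or of $G_2$. The unique point lies on a coordinate axis, so $Q_0\cap Q_1=\varnothing$ in the torus, and $\chi(Y_{W,1})=4$.
\end{itemize}

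The only delicate step is making sure that, in the second case, the single point really leaves the torus rather than some second point remaining. This is settled by the finiteness and bijection argument above: there is only one intersection point at all in $\CC^2$.

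The excluded case, where both $F_{0\bullet\bullet}$ and $F_{\bullet 0\bullet}$ vanish, does not occur under our hypotheses. As noted before the statement, a vanishing hook together with $H=0$ forces $F_{\bullet 1\bullet}=F_{1\bullet\bullet}=0$ (Proposition \ref{prop:factors 2x2x2}).
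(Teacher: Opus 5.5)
Your proof is correct and follows the paper's argument. You reduce to $\chi(Y_{W,1}) = 4+\chi(Q_0\cap Q_1)$ via Corollary~\ref{cor: quadrics}, and Lemma~\ref{lem:tangency} then gives a single intersection point in $\CC^2$, which lies in the torus exactly when neither $F_{0\bullet\bullet}$ nor $F_{\bullet 0\bullet}$ vanishes. Your finiteness and bijection check (the leading coefficient of $G_1$ is nonzero, and $y$ is determined by $x$ on the line from $w_{111}f_0 - w_{110}f_1$) just spells out what is implicit in the proof of that lemma.
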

\begin{proof}
By Lemma \ref{lem:tangency}, the intersection $Q_0 \cap Q_1$ consists of a single point. In the first case, this point is in $(\CC^*)^2$. Therefore  $\chi(Q_0 \cap Q_1) = 1$ and $\chi(Y_{W,1}) = 5$. In the second case $Q_0 \cap Q_1 = \emptyset$ in $(\CC^*)^2$. Therefore  $\chi(Q_0 \cap Q_1) = 0$ and $\chi(Y_{W,1}) = 4$.
\end{proof}
The first case above, where $\chi(Y_{W,1}) = 5$, corresponds to exactly the hyperdeterminant vanishing. This is the case B in Figure \ref{fig:222_factor_relations}. The second case where $\chi(Y_{W,1}) = 4$ corresponds to one minor plus the hyperdeterminant vanishing. This is the case E in Figure \ref{fig:222_factor_relations}.

\subsubsection*{\underline{Exactly one of $F_{\bullet 1 \bullet}$ and $F_{1 \bullet \bullet}$ vanishes, and $H \neq 0$}}
Without loss of generality, we will assume $F_{\bullet 1 \bullet} \neq 0$ but $F_{1 \bullet \bullet}=0$ since the other case leads to similar conclusions by symmetry (via exchanging the roles of the variables $x$ and $y$). 

\begin{lemma} \label{lem:unique-point}
Suppose the quadrics $Q_0$ and $Q_1$ are nonsingular, and $F_{\bullet 1 \bullet} \neq 0$, $F_{1 \bullet \bullet} =0$, and $H \neq 0$. Then $Q_0$ and $Q_1$ intersect in a unique point in $\CC^2$. Furthermore, 
\begin{enumerate}
\item If neither $F_{0 \bullet \bullet}$ nor $F_{\bullet 0 \bullet}$ vanishes, then this unique intersection point is in $(\CC^*)^2$. 
    \item If exactly one of $F_{0 \bullet \bullet}$ and $F_{\bullet 0 \bullet}$ vanishes, then the unique intersection point is on one of the coordinate axes. 
\end{enumerate}
\end{lemma}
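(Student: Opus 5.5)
We follow the approach of Lemma \ref{lem:two intersection points}, using the linear combination \eqref{eq:combine f_0 and f_1:}. Since $F_{1 \bullet \bullet} = \det(W_{1\bullet\bullet}) = 0$, the coefficient of $x$ in $w_{111} f_0 - w_{110} f_1$ vanishes. That combination becomes
$$ F_{\bullet 1 \bullet}\, y + (w_{000}w_{111} - w_{110}w_{001}) = 0. $$
Because $F_{\bullet 1 \bullet} \neq 0$, this fixes $y = y^* := -(w_{000}w_{111} - w_{110}w_{001})/F_{\bullet 1\bullet}$ uniquely. Every point of $Q_0\cap Q_1$ in $\CC^2$ lies on this horizontal line, and conversely $Q_0 \cap Q_1 = Q_0 \cap \{y=y^*\}$, since $f_1$ is a combination of $f_0$ and the linear form (note $w_{110}\neq 0$).

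Next we substitute into $f_0$. This gives $(w_{100} + w_{110}y^*)x + (w_{000} + w_{010} y^*) = 0$, which is linear in $x$. Equivalently, $G_1(x)$ degenerates: its leading coefficient $\det(W_{1\bullet\bullet})$ vanishes, so it becomes the linear polynomial
$$ G_1(x) = [(w_{000}w_{111} - w_{110}w_{001}) - (w_{010}w_{101} -w_{100}w_{011})]\,x + F_{0\bullet\bullet}. $$
When $F_{1\bullet\bullet}=0$, the hyperdeterminant reduces to the discriminant $H = [\cdots]^2$, namely the square of this linear coefficient. Hence $H\neq 0$ says exactly that $G_1$ has a unique root $x^*$, and we get the unique intersection point $(x^*,y^*)$ in $\CC^2$. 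Before that, we should justify that the line $y=y^*$ is not a component of $Q_0$, which is automatic because $Q_0$ is nonsingular, i.e. irreducible. This is the main obstacle: we must check carefully that the $x$-coefficient $w_{100}+w_{110}y^*$ is nonzero precisely when the bracket is nonzero. A direct computation, multiplying by $F_{\bullet 1\bullet}$ and using $F_{1\bullet\bullet}=0$, should identify the two up to the factor $w_{110}$ or $w_{111}$.

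For the location of the point, $x^* = 0$ if and only if the constant term $F_{0\bullet\bullet}$ of $G_1$ vanishes. For the $y$-coordinate, $y^*=0$ if and only if $w_{000}w_{111} = w_{110}w_{001}$. We then check that, under $F_{1\bullet\bullet}=0$, this is equivalent to $F_{\bullet 0\bullet}=0$. A convenient route is to take $G_2(y)$: its leading coefficient is $F_{\bullet1\bullet}\neq0$ and its constant term is $F_{\bullet 0\bullet}$, and $y^*$ is one of its roots. Alternatively, since $F_{1\bullet\bullet}=0$ we may write $(w_{101},w_{111}) = \lambda (w_{100},w_{110})$ and expand both expressions.

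Finally, the two minors cannot both vanish. Indeed, $F_{1\bullet\bullet}=F_{0\bullet\bullet}=0$ is two opposite minors, and $F_{0\bullet\bullet}=F_{\bullet0\bullet}=0$ together with $F_{1\bullet\bullet}=0$ is a square cup. By Proposition \ref{prop:factors 2x2x2}, the latter forces $H=0$ or $F_{\bullet1\bullet}=0$, which contradicts the hypotheses. So if neither minor vanishes the point lies in $(\CC^*)^2$, and if exactly one vanishes it lies on exactly one coordinate axis.
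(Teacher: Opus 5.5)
Your proof is correct and is essentially the paper's argument. $F_{1\bullet\bullet}=0$ kills the $x$-term of $w_{111}f_0-w_{110}f_1$ and fixes $y^*$, so $G_1$ becomes linear with coefficient $B$ satisfying $H=B^2$. Your $\lambda$-expansion, i.e.\ $w_{100}(w_{000}w_{111}-w_{110}w_{001})=w_{110}F_{\bullet0\bullet}+w_{000}F_{1\bullet\bullet}$, is exactly the paper's saturation step. The identity you deferred also holds: $F_{\bullet1\bullet}(w_{100}+w_{110}y^*)=-w_{110}B$.

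One caution: the $G_2$ shortcut alone only yields $y^*=0\Rightarrow F_{\bullet0\bullet}=0$, because $G_2$ also vanishes at $y=-w_{100}/w_{110}$. For the converse needed in part 2, use the $\lambda$-route, or note that this second root is nonzero. Your closing ``cannot both vanish'' paragraph is unnecessary, since the statement already assumes exactly one of the two minors vanishes.
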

\begin{proof}
When only $F_{1 \bullet \bullet}$ vanishes, then in \eqref{eq:combine f_0 and f_1:} we can uniquely solve for $y$. In addition, $H \neq 0$ implies that $G_1(x)$ has a unique root. Therefore $Q_0$ and $Q_1$ intersect in a unique point in $\CC^2$. If $F_{0 \bullet \bullet}$ vanishes, then $x=0$ is the unique root of $G_1(x)$. If $F_{\bullet 0 \bullet}$ vanishes, we see that 
$w_{000}w_{111} - w_{110}w_{001}$ is in
$\langle F_{1 \bullet \bullet}, F_{\bullet 0 \bullet} \rangle \, : \ \left( \prod w_{ijk}\right)^\infty$. This means that the constant term in $\eqref{eq:combine f_0 and f_1:}$ is zero. Therefore $y=0$. 
\end{proof}
Again, we have not considered the case $F_{0 \bullet \bullet} = F_{\bullet 0 \bullet} =0$, since together with $F_{1 \bullet \bullet} = 0$ we have a square cup with all minors vanishing. However, this implies that the missing minor in the cubic frame containing this square cup, namely, $F_{\bullet 1 \bullet}$, as well as $H$ vanish. However, we assumed that these polynomials are not zero.

\begin{proposition} \label{prop: f1dotdot=0}
Suppose $Q_0$ and $Q_1$ are nonsingular quadrics, and $F_{\bullet 1 \bullet} \neq 0$, $F_{1 \bullet \bullet} =0$, and $H \neq 0$. 
\begin{itemize}
    \item If neither $F_{0 \bullet \bullet}$ nor $F_{\bullet 0 \bullet}$ vanishes, then $\chi(Y_{W,1}) = 5$.
    \item If exactly one of $F_{0 \bullet \bullet}$ and $F_{\bullet 0 \bullet}$ vanishes, then $\chi(Y_{W,1}) = 4$.
\end{itemize}    
\end{proposition}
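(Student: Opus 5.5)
The plan mirrors the proofs of Propositions \ref{prop: two intersection points} and \ref{prop:tangency}. It combines Corollary \ref{cor: quadrics} with inclusion--exclusion:
\[
\chi(Y_{W,1}) = -\chi(Q_0\cup Q_1) = -\bigl(\chi(Q_0)+\chi(Q_1)-\chi(Q_0\cap Q_1)\bigr),
\]
with every Euler characteristic computed in $(\CC^*)^2$. Both quadrics are assumed nonsingular, which means $F_{\bullet\bullet 0}\neq 0$ and $F_{\bullet\bullet 1}\neq 0$. The preceding lemma on $g = a+bx+cy+dxy$ then gives $\chi(Q_0)=\chi(Q_1)=-2$. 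Everything therefore reduces to computing $\chi(Q_0\cap Q_1)$, which is the number of intersection points lying in the torus.

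For that count I will invoke Lemma \ref{lem:unique-point}. Its hypotheses are $F_{\bullet 1\bullet}\neq 0$, $F_{1\bullet\bullet}=0$ and $H\neq 0$; under them $Q_0$ and $Q_1$ meet in exactly one point of $\CC^2$.

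\emph{First case:} neither $F_{0\bullet\bullet}$ nor $F_{\bullet 0\bullet}$ vanishes. Part (1) of Lemma \ref{lem:unique-point} places the unique point in $(\CC^*)^2$. Hence $\chi(Q_0\cap Q_1)=1$, so $\chi(Q_0\cup Q_1)=-2-2-1=-5$ and $\chi(Y_{W,1})=5$.

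\emph{Second case:} exactly one of $F_{0\bullet\bullet}$ and $F_{\bullet 0\bullet}$ vanishes. Part (2) of the lemma puts the point on a coordinate axis. Then $Q_0\cap Q_1=\emptyset$ in the torus, so $\chi(Q_0\cap Q_1)=0$, giving $\chi(Q_0\cup Q_1)=-4$ and $\chi(Y_{W,1})=4$.

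The only subtle point is the one already handled in Lemma \ref{lem:unique-point}. There, $G_1$ degenerates to a linear polynomial because its leading coefficient is $F_{1\bullet\bullet}=0$; its linear coefficient must be nonzero for the root to be unique, and $H\neq 0$ guarantees this, since $H$ is then the square of that coefficient. I will remark on this, and also that the intersection is transverse, so no multiplicity issue arises; Euler characteristics count points without multiplicity in any case.

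I will also note why the case with both $F_{0\bullet\bullet}=F_{\bullet 0\bullet}=0$ is omitted. Together with $F_{1\bullet\bullet}=0$ this would be a square cup. By Proposition \ref{prop:factors 2x2x2} (equivalently Figure \ref{fig:222_factor_relations}) that forces $F_{\bullet 1\bullet}=0$ and $H=0$, contradicting the hypotheses. No real obstacle is expected: the argument is bookkeeping on top of Lemma \ref{lem:unique-point}.
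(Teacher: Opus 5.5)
Your proposal is correct and is essentially the paper's proof: the paper only says the argument is identical to that of Proposition~\ref{prop:tangency}. That argument uses $\chi(Q_0)=\chi(Q_1)=-2$ and Lemma~\ref{lem:unique-point} to get $\chi(Q_0\cap Q_1)=1$ or $0$, hence $\chi(Y_{W,1})=5$ or $4$, and your side remarks (that $H$ equals the square of the linear coefficient of $G_1$ when $F_{1\bullet\bullet}=0$, and that $F_{0\bullet\bullet}=F_{\bullet 0\bullet}=0$ would give a square cup forcing $F_{\bullet 1\bullet}=H=0$) match the paper's surrounding discussion.
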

\begin{proof}
 The proof is identical to the proof of Proposition \ref{prop:tangency}. 
\end{proof}
The first case above, where $\chi(Y_{W,1}) = 5$, corresponds to exactly one minor of $W$ vanishing. 
This is the case A in Figure \ref{fig:222_factor_relations}. The second case, where $\chi(Y_{W,1})=4$, corresponds to either the minors in a hook or in a mirror vanishing. These are the cases C and D in Figure \ref{fig:222_factor_relations}, respectively.

\subsubsection*{\underline{Exactly one of $F_{\bullet 1 \bullet}$ and $F_{1 \bullet \bullet}$ vanishes, and $H = 0$}}
Again by symmetry we assume that $
F_{\bullet 1 \bullet} \neq 0$ and $F_{1 \bullet \bullet} = H = 0$. We immediately see that $G_1(x) = F_{0 \bullet \bullet}$. Therefore, if $F_{0 \bullet \bullet} \neq 0$, the quadrics $Q_0$ and $Q_1$ do not intersect. Otherwise, the minors in a mirror and $H$ vanish. This implies that the minors in a cubic frame containing the mirror also vanish. But this contradicts our assumption that $Q_0$ and $Q_1$ are nonsingular, as well as 
$F_{\bullet 1 \bullet} \neq 0$. As a result, we conclude the following. 
\begin{proposition} \label{prop: minor+hyper} Suppose the quadrics $Q_0$ and $Q_1$ are nonsingular, exactly one of $F_{\bullet 1 \bullet}$ and $F_{1 \bullet \bullet}$ vanishes, and $H=0$ and the remaining minors of $W$  do not vanish. Then $Q_0$ and $Q_1$ do not intersect in $\CC^2$ and $\chi(Y_{W,1}) = 4$. 
\end{proposition}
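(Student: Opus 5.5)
By symmetry in $x$ and $y$, I will treat only the case $F_{\bullet 1 \bullet} \neq 0$ and $F_{1 \bullet \bullet} = H = 0$, where $F_{0\bullet\bullet}$, $F_{\bullet 0 \bullet}$, $F_{\bullet\bullet 0}$ and $F_{\bullet\bullet 1}$ are all nonzero. The plan has three steps: show that $Q_0 \cap Q_1 = \emptyset$ in $\CC^2$, evaluate the Euler characteristic, and check that the case $F_{0\bullet\bullet}=0$ cannot occur.

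\textbf{Step 1: empty intersection.} Since $F_{\bullet 1 \bullet}\neq 0$, the relation \eqref{eq:combine f_0 and f_1:} lets us solve for $y$ as an affine function of $x$:
\[
y = -\bigl(F_{1\bullet\bullet}x + (w_{000}w_{111}-w_{110}w_{001})\bigr)/F_{\bullet 1\bullet}.
\]
Any point of $Q_0\cap Q_1$ satisfies $w_{111}f_0 - w_{110}f_1 = 0$, and $w_{111}\neq 0$. So $Q_0 \cap Q_1$ is in bijection with the roots of $G_1(x)$, counted without multiplicity, obtained by substituting this $y$ into $f_0$.

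With $F_{1\bullet\bullet}=0$ the leading coefficient of $G_1$ vanishes. Hence $G_1$ is at most linear, $G_1(x) = Bx + F_{0\bullet\bullet}$, with $B$ the middle coefficient. The discriminant of $G_1$, viewed formally as a quadratic, is $B^2 - 4F_{1\bullet\bullet}F_{0\bullet\bullet}$, and this equals $H$. Substituting $F_{1\bullet\bullet}=0$ gives $H = B^2$, so $H=0$ forces $B=0$. Therefore $G_1 \equiv F_{0\bullet\bullet}$, a nonzero constant, and $G_1$ has no roots. I should double-check that the formal discriminant identity still holds after the specialization $F_{1\bullet\bullet}=0$. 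It does, because it is a polynomial identity in the $w_{ijk}$; alternatively one can read it off the explicit formula for $H_{k_1k_2}$, which has the form $(\text{linear combination})^2 - 4\det(W_{0\bullet\bullet})\det(W_{1\bullet\bullet})$.

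\textbf{Step 2: Euler characteristic.} Since $Q_0\cap Q_1=\emptyset$ in $\CC^2$, it is in particular empty in $(\CC^*)^2$, so $\chi(Q_0\cap Q_1)=0$. Both quadrics are nonsingular, so $\chi(Q_0)=\chi(Q_1)=-2$ by the lemma on $g=a+bx+cy+dxy$. Then
\[
\chi(Y_{W,1}) = -\bigl(\chi(Q_0)+\chi(Q_1)-\chi(Q_0\cap Q_1)\bigr) = 4.
\]

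\textbf{Step 3: the excluded case.} If $F_{0\bullet\bullet}=0$, then the mirror $\{F_{0\bullet\bullet},F_{1\bullet\bullet}\}$ vanishes together with $H$. By Proposition \ref{prop:factors 2x2x2} (the cubic-frame implication), this forces $F_{\bullet 0\bullet}=F_{\bullet 1\bullet}=0$ or $F_{\bullet\bullet 0}=F_{\bullet\bullet 1}=0$, contradicting the hypotheses. This is consistent with the assumption that the remaining minors are nonzero.

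The main obstacle is Step 1: confirming that $H$ specializes to $B^2$, i.e.\ that the relevant discriminant is exactly the hyperdeterminant. The rest is bookkeeping.
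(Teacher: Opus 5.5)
Your proof is correct and is essentially the paper's argument. With $F_{1\bullet\bullet}=0$, the identity $H=B^2-4F_{0\bullet\bullet}F_{1\bullet\bullet}$ forces the middle coefficient of $G_1$ to vanish. So $G_1$ reduces (up to a nonzero scalar) to the nonzero constant $F_{0\bullet\bullet}$, which gives $Q_0\cap Q_1=\emptyset$ and $\chi(Y_{W,1})=-(-2-2-0)=4$. Two small remarks: your Step 3 only repeats the paper's consistency remark, since $F_{0\bullet\bullet}\neq 0$ is already a hypothesis, and Step 1 needs only the direction ``point of $Q_0\cap Q_1$ $\Rightarrow$ root of $G_1$'', which uses no nonvanishing assumption (the converse you call a bijection would need $w_{110}\neq 0$, not $w_{111}\neq 0$).
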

The only case above, where $\chi(Y_{W,1}) = 4$, corresponds to exactly one minor of $W$ and the hyperdeterminant vanishing. This is the case $E$ in Figure \ref{fig:222_factor_relations}. 

\subsubsection*{\underline{$F_{\bullet 1 \bullet} = F_{1 \bullet \bullet} =0$ and $H \neq 0$}}
\begin{lemma} \label{lem:hook}
Suppose the quadrics $Q_0$ and $Q_1$ are nonsingular. If $F_{\bullet 1 \bullet} = F_{1 \bullet \bullet} = 0$
and $H \neq 0$, then $Q_0$ and $Q_1$ do not intersect in $\CC^2$ if and only if $F_{0 \bullet \bullet} \neq 0$.
\end{lemma}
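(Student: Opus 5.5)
Under the hypotheses $F_{\bullet 1 \bullet} = F_{1 \bullet \bullet} = 0$, the linear combination \eqref{eq:combine f_0 and f_1:} loses both its $x$ and $y$ terms and reduces to the constant $c := w_{000}w_{111} - w_{110}w_{001}$. So every common point of $Q_0$ and $Q_1$ in $\CC^2$ must satisfy $c = 0$. Since $w_{111}\neq 0$, the pencil element $w_{111}f_0 - w_{110}f_1 = c$ together with $f_1$ generates the same ideal as $f_0, f_1$. Hence $Q_0 \cap Q_1 = \emptyset$ in $\CC^2$ if and only if $c \neq 0$. If $c = 0$, then $f_0$ and $f_1$ are proportional, so $Q_0 = Q_1$, and the intersection is certainly nonempty because a nonsingular hyperbola has points in $\CC^2$.

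It then remains to show that, under $F_{\bullet 1 \bullet} = F_{1 \bullet \bullet} = 0$ and $H \neq 0$, we have $c \neq 0 \iff F_{0 \bullet \bullet} \neq 0$. Set $d := w_{010}w_{101} - w_{100}w_{011}$. The coefficients of $G_1(x)$ then give $H = (c-d)^2 - 4F_{1\bullet\bullet}F_{0\bullet\bullet} = (c-d)^2$, since $F_{1 \bullet \bullet}=0$. Likewise $G_2$ gives $H = (c+d)^2 - 4F_{\bullet 1\bullet}F_{\bullet 0\bullet} = (c+d)^2$. Comparing the two yields $cd = 0$, and $H\neq 0$ forces $c \neq d$ and $c\neq -d$. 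So exactly one of $c,d$ is zero.

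Next I would relate $F_{0\bullet\bullet}$ to $c$ and $d$ via the relations $w_{101}w_{110} = w_{100}w_{111}$ and $w_{011}w_{110} = w_{010}w_{111}$, which are the vanishing minors. Rather than pinning down an exact identity, the cleanest route is to work in the torus and scale. Write $w_{101} = \lambda w_{100}$ and $w_{111} = \lambda w_{110}$ with $\lambda = w_{111}/w_{110}$, which encodes $F_{1\bullet\bullet}=0$. Similarly $F_{\bullet 1 \bullet}=0$ gives $w_{011} = \lambda w_{010}$ with the same ratio $\lambda$. Substituting,
\[ c = w_{110}(\lambda w_{000} - w_{001}) \quad \text{and} \quad d = 0 . \]
So in fact $d=0$ automatically, and $F_{0\bullet\bullet} = w_{000}w_{011} - w_{010}w_{001} = w_{010}(\lambda w_{000} - w_{001})$. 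Since $w_{110}, w_{010} \in \CC^*$, we get $c = 0 \iff \lambda w_{000} = w_{001} \iff F_{0 \bullet \bullet} = 0$. This also shows $H = c^2$, so $H\neq 0$ is equivalent to $F_{0\bullet\bullet}\neq 0$ in this setting. The statement's hypothesis is therefore consistent, and the lemma follows. The only delicate point is the sign and index bookkeeping: one must check that the same ratio $\lambda$ arises from both vanishing minors, which is what the parametrization above makes transparent. If the orientation of $F_{1\bullet\bullet}$ turns out to use a different pairing, one redoes the substitution accordingly, and the conclusion is unchanged.
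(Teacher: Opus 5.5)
Your proof is correct and follows the paper's route. With $F_{1\bullet\bullet}=F_{\bullet 1\bullet}=0$, the combination $w_{111}f_0-w_{110}f_1$ collapses to the constant $c=w_{000}w_{111}-w_{110}w_{001}$, and the proportionality of $(w_{010},w_{011})$ and $(w_{110},w_{111})$ then gives $c=0\iff F_{0\bullet\bullet}=0$; your ratio $\lambda$ is the paper's vanishing of $F_{\bullet 1\bullet}$ as a minor of the $2\times 3$ matrix. Your second paragraph via the discriminants of $G_1,G_2$ is superfluous. Your closing observation is correct, though: here $H=c^2$, so $H\neq 0$ already forces $F_{0\bullet\bullet}\neq 0$ and both sides of the equivalence always hold, matching the paper's remark that a vanishing square cup forces $H=0$.
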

\begin{proof} Under the above hypotheses, the quadrics do not intersect if and only if the constant term $w_{000}w_{111} - w_{110}w_{001}$ in \eqref{eq:combine f_0 and f_1:} is not equal to zero. This constant term is 
the determinant of the first and third columns of the matrix
$\begin{bmatrix} 
w_{000} & w_{010} & w_{110} \\
w_{001} & w_{001} & w_{111}
\end{bmatrix}$. The determinant of the last two columns is $F_{\bullet 1 \bullet} = 0$. Since the determinant of the first two columns is $F_{0 \bullet \bullet}$, we conclude that $w_{000}w_{111} - w_{110}w_{001} \neq 0$ if and only if $F_{0 \bullet \bullet} \neq 0$.
\end{proof}
\begin{proposition} \label{prop: hook}
Suppose the quadrics $Q_0$ and $Q_1$ are nonsingular, both $F_{\bullet 1 \bullet}$ and $F_{1 \bullet \bullet}$ vanish, but $H$, $F_{0 \bullet \bullet}$ and $F_{\bullet 0 \bullet}$ do not vanish. Then $Q_0$ and $Q_1$ do not intersect in $\CC^2$ and $\chi(Y_{W,1}) = 4$.
\end{proposition}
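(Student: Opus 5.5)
The plan is to follow the same template as Propositions \ref{prop:tangency} and \ref{prop: f1dotdot=0}. We write
$$\chi(Y_{W,1}) = -\chi(Q_0\cup Q_1) = -\left(\chi(Q_0)+\chi(Q_1)-\chi(Q_0\cap Q_1)\right),$$
with all Euler characteristics taken in $(\CC^*)^2$. Since $Q_0$ and $Q_1$ are nonsingular, the lemma on the quadric $g=a+bx+cy+dxy$ gives $\chi(Q_0)=\chi(Q_1)=-2$. The whole statement therefore reduces to showing $Q_0\cap Q_1=\emptyset$ in $\CC^2$, hence also in $(\CC^*)^2$. That yields $\chi(Q_0\cap Q_1)=0$, so $\chi(Q_0\cup Q_1)=-4$ and $\chi(Y_{W,1})=4$.

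For the emptiness, we invoke Lemma \ref{lem:hook}. Its hypotheses are exactly ours: both quadrics are nonsingular, $F_{\bullet 1\bullet}=F_{1\bullet\bullet}=0$, and $H\neq 0$. Since in addition $F_{0\bullet\bullet}\neq 0$, the lemma says $Q_0$ and $Q_1$ do not intersect in $\CC^2$.

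To make this transparent, note that with $F_{1\bullet\bullet}=F_{\bullet 1\bullet}=0$ the combination \eqref{eq:combine f_0 and f_1:} collapses to the nonzero constant $w_{000}w_{111}-w_{110}w_{001}$. That constant is nonzero because the $2\times 3$ matrix of columns $(w_{0jk}), (w_{01k}), (w_{11k})$ has nonzero entries, its last two columns are proportional, and its first two columns are not proportional (this is $F_{0\bullet\bullet}\neq 0$). A common zero of $f_0$ and $f_1$ would make this nonzero constant vanish, which is impossible.

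The only point needing care, and the one I expect to be the main obstacle, is checking that the hypothesis set is consistent and that it matches the advertised vanishing pattern. The assumption $F_{\bullet 0\bullet}\neq 0$ is not used in the computation itself. Its role is to ensure that exactly the hook $\{F_{1\bullet\bullet},F_{\bullet 1\bullet}\}$ vanishes, since a square cup would force $H=0$ by Proposition \ref{prop:factors 2x2x2}. With that in place, this stratum is case C of Figure \ref{fig:222_factor_relations}, with value $4$.
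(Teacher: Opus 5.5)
Your proposal is correct and is essentially the paper's argument. The paper uses Lemma \ref{lem:hook} to get $Q_0\cap Q_1=\emptyset$ in $\CC^2$: the combination $w_{111}f_0-w_{110}f_1$ collapses to the constant $w_{000}w_{111}-w_{110}w_{001}$, which is nonzero because $F_{\bullet 1\bullet}=0$ and $F_{0\bullet\bullet}\neq 0$. It then reads off $\chi(Y_{W,1})=-(-2-2-0)=4$.

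There is one notational slip: the first column of your $2\times 3$ matrix should be $(w_{000},w_{001})^T$, not $(w_{0jk})$. Your remark that $F_{\bullet 0\bullet}\neq 0$ is already forced by the other hypotheses (via the square cup) is right, but it is not needed.
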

The case in the above proposition, where $\chi(Y_{W,1})=4$, has the minors in a hook vanishing. This is the case 
C in Figure \ref{fig:222_factor_relations}. 
Note that by Lemma \ref{lem:hook}, if $F_{0 \bullet \bullet} =0$ the the quadrics intersect. However, the three vanishing minors form a square cup, and  this implies the vanishing of all the minors in the cubic frame containing the square cup and the hyperdeterminant. Since we assumed $H \neq 0$, we do not consider this case here.  

\subsubsection*{\underline{$F_{\bullet 1 \bullet} = F_{1 \bullet \bullet} = H =0$}}
\begin{proposition} \label{prop: frame+hyper}
Suppose the quadrics $Q_0$ and $Q_1$ are nonsingular and $F_{\bullet 1 \bullet} = F_{1 \bullet \bullet} = H =0$. This implies that $F_{0 \bullet \bullet} = F_{\bullet 0 \bullet} =0$. In this case, $Q_0 = Q_1$ and $\chi(Y_{W,1}) = 2$. 
\end{proposition}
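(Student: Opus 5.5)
We first show that $F_{\bullet 1 \bullet} = F_{1 \bullet \bullet} = H = 0$ forces $F_{0 \bullet \bullet} = F_{\bullet 0 \bullet} = 0$. With $F_{1\bullet\bullet}=F_{\bullet 1 \bullet}=0$, the quadratic $G_1(x)$ of the previous subsection has leading coefficient zero. So $H$, the discriminant of $G_1$, reduces to the square of its linear coefficient: $H=[(w_{000}w_{111}-w_{110}w_{001})-(w_{010}w_{101}-w_{100}w_{011})]^2$. Likewise, from $G_2(y)$ we get $H=[(w_{000}w_{111}-w_{110}w_{001})+(w_{010}w_{101}-w_{100}w_{011})]^2$. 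Both squares vanish, so $w_{000}w_{111}-w_{110}w_{001}=0$ and $w_{010}w_{101}-w_{100}w_{011}=0$. Now return to the $2\times 3$ matrix with rows $(w_{000},w_{010},w_{110})$ and $(w_{001},w_{011},w_{111})$ from the proof of Lemma \ref{lem:hook}. The minor on its last two columns is $F_{\bullet 1 \bullet}=0$, and the minor on its first and third columns is $w_{000}w_{111}-w_{110}w_{001}=0$. All entries lie in $\CC^*$, so the third column is nonzero and the two rows are proportional. Hence the minor on the first two columns, $F_{0\bullet\bullet}$, vanishes too. The analogous matrix with rows $(w_{000},w_{100},w_{110})$ and $(w_{001},w_{101},w_{111})$ gives $F_{\bullet 0 \bullet}=0$ by the same argument, using $F_{1\bullet\bullet}=0$. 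This is consistent with the cubic-frame-plus-$H$ configuration (case d of Proposition \ref{prop:factors 2x2x2}).

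Next we show $Q_0=Q_1$. Proportionality of the first pair of rows gives $(w_{000},w_{010},w_{110})=\lambda(w_{001},w_{011},w_{111})$ for some $\lambda\in\CC^*$. The second pair gives $(w_{000},w_{100},w_{110})=\mu(w_{001},w_{101},w_{111})$, and the shared coordinates $w_{000}$, $w_{001}$ force $\mu=\lambda$. Thus all four coefficients of $f_0$ equal $\lambda$ times those of $f_1$, so $f_0=\lambda f_1$ and $Q_0=Q_1$ as curves in $(\CC^*)^2$. Equivalently, the whole slice $W_{\bullet\bullet 0}$ is a multiple of $W_{\bullet\bullet 1}$.

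Finally we compute the Euler characteristic. By Corollary \ref{cor: quadrics} with $n=1$, $\chi(Y_{W,1})=-\chi(Q_0\cup Q_1)=-\chi(Q_0)$. The quadric $Q_0$ is nonsingular by assumption, so the lemma on $g=a+bx+cy+dxy$ gives $\chi(Q_0)=-2$ in $(\CC^*)^2$. Hence $\chi(Y_{W,1})=2$.

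The only delicate step is the first: recognizing that both $G_1$ and $G_2$ degenerate to linear forms, so that the two expressions for $H$ give two independent vanishing conditions whose combination yields proportionality. Everything after that is bookkeeping.
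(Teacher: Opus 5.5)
Your proof is correct, and it takes a genuinely different route from the paper. The paper handles the first claim by a computer-algebra assertion: the radical of $\langle H, F_{\bullet 1 \bullet}, F_{1 \bullet \bullet}\rangle : (\prod w_{ijk})^\infty$ contains $F_{0\bullet\bullet}$ and $F_{\bullet 0\bullet}$. It then states, without further argument, that $f_1$ is a multiple of $f_0$.

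You instead work by hand. Set $c=w_{000}w_{111}-w_{110}w_{001}$ and $d=w_{010}w_{101}-w_{100}w_{011}$. The two forms $H=(c-d)^2-4F_{0\bullet\bullet}F_{1\bullet\bullet}=(c+d)^2-4F_{\bullet 0\bullet}F_{\bullet 1\bullet}$ collapse to perfect squares, which forces $c=d=0$. A rank-one argument on two $2\times 3$ matrices then gives both missing minors and the explicit relation $f_0=\lambda f_1$ with $\lambda=w_{110}/w_{111}$.

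The paper's route is shorter and gives an ideal-theoretic statement. Yours can be checked without software, and it supplies the step from vanishing minors to $Q_0=Q_1$ that the paper leaves implicit.

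Two minor points. First, $G_1$ and $G_2$ were derived assuming $F_{\bullet 1\bullet}\neq 0$, resp.\ $F_{1\bullet\bullet}\neq 0$, which is exactly what fails here. So phrase that step as the polynomial identities above: the first is the paper's definition of $H$, and the second is equivalent to the identity $cd=F_{\bullet 0\bullet}F_{\bullet 1\bullet}-F_{0\bullet\bullet}F_{1\bullet\bullet}$, which holds identically. Second, you do not need the second form at all. $F_{1\bullet\bullet}=F_{\bullet 1\bullet}=0$ makes $(w_{100},w_{101})$ and $(w_{010},w_{011})$ both multiples of $(w_{110},w_{111})$, so $d=0$, and the definition of $H$ alone then gives $c=0$.
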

\begin{proof}
It is easy to compute that 
the radical of the ideal $\langle H, F_{\bullet 1 \bullet}, F_{1 \bullet \bullet} \rangle \, : \, \left( \prod w_{ijk}\right)^\infty$ contains $F_{0 \bullet \bullet}$ and $F_{\bullet 0 \bullet}$.  This implies that the coefficients of $f_1$ is a constant multiple of the coefficients of $f_0$. Hence, $Q_0 =Q_1$ and the result follows.
\end{proof}
This last case corresponds to the minors in a cubic frame and the hyperdeterminant vanishing. This is the case G in Figure \ref{fig:222_factor_relations}.

\subsection{$Q_0$ nonsingular and $Q_1$ singular}
Now we assume that $F_{\bullet \bullet 0} \neq 0$ and $F_{\bullet \bullet 1} =0$. In other words, $Q_0$ is nonsingular but $Q_1$ is the union of the lines $L_1$ and $L_2$ defined by $x = -\frac{w_{011}}{w_{111}}$ and $y=-\frac{w_{101}}{w_{111}}$, respectively. This means that $\chi(Q_0) = -2$ and $\chi(Q_1) = -1$. Again, we need to determine all the possible ways $Q_0$ and $Q_1$ intersect in $(\CC^*)^2$.

\subsubsection*{\underline{$F_{\bullet 1 \bullet} \neq 0$, $F_{1 \bullet \bullet} \neq 0$, and $H \neq 0$}}
We start out with pointing out that Lemma \ref{lem:two intersection points} remains valid even if we relax the condition on the nonsingularity of $Q_1$. Therefore we get the analog of Proposition \ref{prop: two intersection points}.
\begin{proposition} \label{prop: nonsingular+singular}
Suppose $Q_0$ is a nonsingular quadric and $Q_1$ is a singular quadric. Moreover, suppose that $F_{\bullet 1 \bullet} \neq 0$, $F_{1 \bullet \bullet} \neq 0$, and $H \neq 0$. 
\begin{itemize}
    \item If neither $F_{0 \bullet \bullet}$ nor $F_{\bullet 0 \bullet}$ vanishes, then $\chi(Y_{W,1}) = 5$.
    \item If exactly one of $F_{0 \bullet \bullet}$ and $F_{\bullet 0 \bullet}$ vanishes, then $\chi(Y_{W,1}) = 4$.
    \item If both $F_{0 \bullet \bullet}$ and $F_{\bullet 0 \bullet}$ vanish, then $\chi(Y_{W,1}) = 3$.
\end{itemize}    
\end{proposition}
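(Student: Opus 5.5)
We follow the same scheme as in the nonsingular case: compute $\chi(Y_{W,1}) = -\left(\chi(Q_0)+\chi(Q_1)-\chi(Q_0\cap Q_1)\right)$. Here $\chi(Q_0)=-2$ and $\chi(Q_1)=-1$, so it suffices to show that $\chi(Q_0\cap Q_1)$ equals $2$, $1$, $0$ in the three cases. This gives $\chi(Y_{W,1}) = 3+\chi(Q_0\cap Q_1) = 5,4,3$.

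\textbf{Two points in $\CC^2$.} First we check that Lemma \ref{lem:two intersection points} does not use the nonsingularity of $Q_1$. Equation \eqref{eq:combine f_0 and f_1:} is a linear combination of $f_0$ and $f_1$. Since $F_{\bullet 1 \bullet}\neq 0$, the pair $\{f_0=f_1=0\}$ is equivalent to $\{f_0=0,\ \eqref{eq:combine f_0 and f_1:}\}$, because $w_{111}\neq 0$ lets us recover $f_1$. Eliminating $y$ gives the quadratic $G_1(x)$, whose leading coefficient is $F_{1\bullet\bullet}\neq 0$ and whose discriminant is $H\neq 0$. So $G_1$ has two distinct roots, and each root determines a unique $y$ through the linear relation. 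Hence $Q_0\cap Q_1$ consists of exactly two distinct points of $\CC^2$; the same holds using $G_2(y)$.

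\textbf{Which points lie on the axes.} The constant term of $G_1$ is $F_{0\bullet\bullet}$, so $x=0$ is a root exactly when $F_{0\bullet\bullet}=0$. Likewise the constant term of $G_2$ is $F_{\bullet 0\bullet}$, so $y=0$ occurs exactly when $F_{\bullet 0 \bullet}=0$.

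We also need that no intersection point is the origin, so that the two axis conditions pick out different points. The origin is impossible because $f_0(0,0)=w_{000}\neq 0$. Hence when both minors vanish, one intersection point lies on $\{x=0\}$ and the other on $\{y=0\}$, and neither is in $(\CC^*)^2$.

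\textbf{Counting points in the torus.} The number of intersection points in $(\CC^*)^2$ is therefore $2$, $1$ or $0$ according to how many of $F_{0\bullet\bullet}$, $F_{\bullet 0\bullet}$ vanish. Since these points are isolated, $\chi(Q_0\cap Q_1)$ equals this count, and the values of $\chi(Y_{W,1})$ follow.

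\textbf{Main obstacle.} The only delicate step is making sure that singularity of $Q_1$ does not invalidate the elimination argument. One could worry that one of the two roots lands exactly on a line component of $Q_1$ in a degenerate way. It does not: the two points are genuinely distinct solutions of the system, and nothing in the argument used irreducibility of $Q_1$. A last check is that the third case, both $F_{0\bullet\bullet}$ and $F_{\bullet 0\bullet}$ vanishing, is realizable together with the standing hypotheses. This is not needed for the statement itself, which is conditional.
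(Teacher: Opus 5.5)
Your proof is correct and follows the paper's route: the paper also notes that Lemma \ref{lem:two intersection points} never uses the nonsingularity of $Q_1$, and then repeats the count from Proposition \ref{prop: two intersection points} with $\chi(Q_0)+\chi(Q_1)=-3$. Your remark that the origin is never an intersection point (because $f_0(0,0)=w_{000}\neq 0$) makes explicit a step the paper leaves implicit. One small slip: $\{f_0=f_1=0\}$ is equivalent to $\{f_0=0,\ w_{111}f_0-w_{110}f_1=0\}$ because $w_{110}\neq 0$, not $w_{111}\neq 0$; the condition $F_{\bullet 1\bullet}\neq 0$ is needed only to solve for $y$.
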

\begin{proof}
The proof is identical to the proof of 
Proposition \ref{prop: two intersection points}, except  
now $\chi(Q_0) + \chi(Q_1) =~-3$.
\end{proof}
We again point out that the first case above, where $\chi(Y_{W,1})=5$, corresponds to exactly one of the minors of $W$ vanishing. This is the case A in Figure \ref{fig:222_factor_relations}. The second case where $\chi(Y_{W,1}) = 4$ corresponds to exactly two minors which form a hook vanishing. This is the case C in Figure \ref{fig:222_factor_relations}.
Finally, the third case where $\chi(Y_{W,1})=3$ corresponds to three minors meeting at a vertex of $W$ vanishing. This is the case F.

\subsubsection*{\underline{$F_{\bullet 1 \bullet} \neq 0$, $F_{1 \bullet \bullet} \neq 0$, and $H = 0$}}
Now Lemma \ref{lem:tangency} stays valid except its last statement. The two quadrics intersect in a single point of multiplicity two: the intersection point of $L_1$ and $L_2$ lies on $Q_0$. Moreover, if neither $F_{0\bullet \bullet}$ nor $F_{\bullet 0 \bullet}$ vanishes, this intersection point is in $(\CC^*)^2$. If at least one of these vanishes, we will have the minors of a hook and hyperdeterminant vanish. But then all minors in the cubic frame containing this hook, including $F_{\bullet \bullet 0}$, are equal to zero. But we assumed that $Q_0$ is nonsingular. 
\begin{proposition} \label{prop: one-singular + hyper}
Suppose $Q_0$ is a nonsingular quadric and $Q_1$ is a singular quadric. If only the hyperdeterminant $H$ vanishes, then $\chi(Y_{W,1}) = 4$.    
\end{proposition}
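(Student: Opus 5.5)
We compute $\chi(Y_{W,1}) = -\chi(Q_0\cup Q_1) = -\bigl(\chi(Q_0)+\chi(Q_1)-\chi(Q_0\cap Q_1)\bigr)$ using the formula from Corollary \ref{cor: quadrics}. Since $Q_0$ is nonsingular and $Q_1 = L_1\cup L_2$ is singular, the preceding lemma gives $\chi(Q_0)=-2$ and $\chi(Q_1)=-1$. The statement therefore reduces to showing that $Q_0\cap Q_1$ consists of exactly one point of $(\CC^*)^2$, so that $\chi(Q_0\cap Q_1)=1$ and $\chi(Y_{W,1}) = -(-3-1)=4$.

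\emph{Step 1: the intersection is a single point of $\CC^2$.} The hypotheses say that only $H$ and $F_{\bullet\bullet 1}$ vanish. In particular $F_{\bullet 1\bullet}\neq 0$ and $F_{1\bullet\bullet}\neq 0$, so the elimination in \eqref{eq:combine f_0 and f_1:} is valid: we can solve for $y$ as an affine function of $x$, and the $x$-coordinates of the intersection points are the roots of $G_1(x)$. This elimination never used the nonsingularity of $Q_1$. Since $F_{1\bullet\bullet}\neq0$, $G_1$ is a genuine quadratic. Its discriminant is $H=0$, so it has a single double root $x^*$, and the corresponding $y^*$ is determined uniquely. Hence $Q_0\cap Q_1$ is the single point $(x^*,y^*)\in\CC^2$.

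\emph{Step 2: the point lies in the torus.} The product of the roots of $G_1$ is $F_{0\bullet\bullet}/F_{1\bullet\bullet}$, and this is nonzero because $F_{0\bullet\bullet}\neq0$. So $x^*\neq 0$. Applying the same argument to $G_2(y)$, which also has discriminant $H$ and constant term $F_{\bullet 0\bullet}\neq 0$, gives $y^*\neq0$. Thus the unique intersection point lies in $(\CC^*)^2$, exactly as in the discussion preceding the statement. Alternatively, we can note that if this point were on an axis, a hook together with $H$ would vanish, which would force $F_{\bullet\bullet 0}=0$, contradicting the nonsingularity of $Q_0$.

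\emph{Step 3 and main obstacle.} The only delicate point is to check that Step 1 really describes the set-theoretic intersection, and not just a count with multiplicity. Since $Q_1$ is reducible, we should confirm that $Q_0$ does not contain $L_1$ or $L_2$. But $Q_0$ is an irreducible hyperbola, so it contains no line, and therefore $Q_0\cap Q_1$ is finite. Every point of $Q_0\cap Q_1$ satisfies \eqref{eq:combine f_0 and f_1:} and $G_1(x)=0$, so it must be the point $(x^*,y^*)$ found above. Geometrically, $Q_0$ passes through the node $L_1\cap L_2$, which accounts for the double root. Therefore $\chi(Q_0\cap Q_1)=1$, and the statement follows.
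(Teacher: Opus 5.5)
Your proposal is correct and follows essentially the same route as the paper. Both use $\chi(Q_0)=-2$ and $\chi(Q_1)=-1$, and both eliminate via $w_{111}f_0-w_{110}f_1$ to get quadratics $G_1(x)$, $G_2(y)$ with discriminant $H=0$ and nonzero constant terms $F_{0\bullet\bullet}$, $F_{\bullet 0\bullet}$, so that $Q_0\cap Q_1$ is a single point of $(\CC^*)^2$; the paper also uses the hook-plus-$H$ argument forcing $F_{\bullet\bullet 0}=0$, as in your ``alternatively'' remark. Your value $\chi(Q_0\cap Q_1)=1$ is the right one: the paper's one-line proof writes $\chi(Q_0\cap Q_1)=-1$, which is a sign slip, since that value would give $\chi(Y_{W,1})=2$ rather than $4$.
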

\begin{proof}
Based on the observations above we conclude that $\chi(Q_0 \cap Q_1) = -1$.
\end{proof}
This case, where $\chi(Y_{W,1}) =4$, corresponds to exactly one minor of $W$
and the hyperdeterminant $H$ vanishing. 
This is the case E in Figure \ref{fig:222_factor_relations}.

\subsubsection*{\underline{Exactly one of $F_{\bullet 1 \bullet}$ and $F_{1 \bullet \bullet}$ vanishes, and $H \neq 0$}}
Without loss of generality, we will assume $F_{\bullet 1 \bullet} \neq 0$ but $F_{1 \bullet \bullet}=0$. This time, Lemma \ref{lem:unique-point} stays valid with a small modification to its last statement. As before, $Q_0$  and $Q_1$ intersect in a unique point. This point is the intersection of $Q_0$ and $L_1$, and $L_2$ does not intersect $Q_0$ at all. The intersection point is in $(\CC^*)^2$ if
neither $F_{0\bullet \bullet}$ nor $F_{\bullet 0 \bullet}$ vanishes. Note that if $F_{0 \bullet \bullet}$ vanishes we have the three minors of a square cup vanishing. And this forces $F_{\bullet \bullet 0}=0$, implying that $Q_0$ is singular. But we are assuming $Q_0$ is nonsingular. 
On the other hand, if $F_{\bullet 0 \bullet}$ vanishes, $Q_0$ and $Q_1$ intersect in a unique point on $y=0$. Hence, we get the following result. 

\begin{proposition} \label{prop: one-singular hook}
Suppose $Q_0$ is a nonsingular quadric and $Q_1$ is a singular quadric. Moreover, suppose that $F_{\bullet 1 \bullet} \neq 0$, $F_{1 \bullet \bullet} =0$, and $H \neq 0$. 
\begin{itemize}
    \item If neither $F_{0 \bullet \bullet}$ nor $F_{\bullet 0 \bullet}$ vanishes, then $\chi(Y_{W,1}) = 4$.
    \item If $F_{\bullet 0 \bullet}$ vanishes, then $\chi(Y_{W,1}) = 3$.
\end{itemize}    
\end{proposition}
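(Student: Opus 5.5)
We compute $\chi(Y_{W,1}) = -\left(\chi(Q_0)+\chi(Q_1)-\chi(Q_0\cap Q_1)\right)$. Since $Q_0$ is nonsingular and $Q_1 = L_1 \cup L_2$ is singular, we have $\chi(Q_0)=-2$ and $\chi(Q_1)=-1$. The first two terms therefore contribute $-3$, and the statement reduces to showing that $\chi(Q_0\cap Q_1)$, the number of intersection points in $(\CC^*)^2$, equals $1$ in the first case and $0$ in the second. This gives $\chi(Y_{W,1}) = -(-3-1)=4$ and $-(-3-0)=3$ respectively.

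First I would establish that $Q_0 \cap Q_1$ is a single point of $\CC^2$. With $F_{1\bullet\bullet}=0$ the coefficient of $x$ in \eqref{eq:combine f_0 and f_1:} vanishes, while $F_{\bullet 1\bullet}\neq 0$. So $y$ is uniquely determined by a linear equation, and $G_1(x)$ degenerates to a linear polynomial in $x$. Its leading coefficient is $\pm\sqrt{H}$ up to sign, because $H$ equals the square of that coefficient when $F_{1\bullet\bullet}=0$; hence it is nonzero since $H\neq 0$. Thus there is exactly one root $x$, and exactly one intersection point $(x,y)\in\CC^2$; this is the argument of Lemma \ref{lem:unique-point}, which only used $F_{\bullet1\bullet}\ne 0$, $F_{1\bullet\bullet}=0$, $H\neq0$ and not the nonsingularity of $Q_1$.

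Next I would locate this point geometrically. Since $F_{1\bullet\bullet}=w_{100}w_{111}-w_{110}w_{101}=0$, the line $L_2=\{y=-w_{101}/w_{111}\}$ has $y=-w_{100}/w_{110}$. Restricting $f_0$ to this line gives $w_{000}-w_{010}w_{100}/w_{110}+x(w_{100}-w_{110}w_{100}/w_{110})$, which is a nonzero constant when $F_{\bullet\bullet0}\neq0$. So $L_2\cap Q_0=\emptyset$ and the unique point lies on $L_1$, which is contained in $(\CC^*)\times\CC$. Hence $x\neq 0$ automatically. Then:
\begin{itemize}
\item If $F_{0\bullet\bullet}\neq 0$ and $F_{\bullet0\bullet}\neq0$, the point has $y\neq 0$ as well. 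Its $y$-coordinate is zero exactly when the constant term of \eqref{eq:combine f_0 and f_1:} vanishes, and, as in the proof of Lemma \ref{lem:unique-point}, this constant term lies in $\langle F_{1\bullet\bullet},F_{\bullet0\bullet}\rangle:(\prod w_{ijk})^\infty$. So $\chi(Q_0\cap Q_1)=1$.
\item If $F_{\bullet0\bullet}=0$, the same saturation computation shows the constant term is zero, so $y=0$. The point is then off the torus, and $\chi(Q_0\cap Q_1)=0$.
\end{itemize}

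The main obstacle is the converse direction in the first bullet: showing that vanishing of the constant term $w_{000}w_{111}-w_{110}w_{001}$ forces $F_{\bullet0\bullet}=0$ in the present setting, that is, with $F_{1\bullet\bullet}=F_{\bullet\bullet1}=0$ and all $w_{ijk}\neq 0$. I would handle it by parametrizing. Write $F_{\bullet\bullet1}=0$ and $F_{1\bullet\bullet}=0$ as rank-one conditions, $w_{011}=\lambda w_{111}$, $w_{001}=\lambda w_{101}$, and $w_{110}=\mu w_{111}$, $w_{100}=\mu w_{101}$. Then check directly that the constant term and $F_{\bullet0\bullet}$ become proportional up to a unit, giving the equivalence.
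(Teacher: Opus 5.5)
Your proof is correct and follows essentially the same route as the paper. Lemma \ref{lem:unique-point} gives a unique intersection point in $\CC^2$; $L_2$ misses $Q_0$, so the point lies on $L_1$; and $y=0$ exactly when $F_{\bullet 0\bullet}=0$, which yields $\chi(Q_0\cap Q_1)=1$ or $0$. Your value $1$ is right, and the paper's ``$-1$'' is a typo. Your parametrization closes the converse direction that the paper leaves implicit; it amounts to the identity $w_{101}(w_{000}w_{111}-w_{110}w_{001}) = w_{111}F_{\bullet 0\bullet} + w_{001}F_{1\bullet\bullet}$. Using $L_1$ to get $x\neq 0$ also lets you avoid the paper's appeal to the square-cup implication.
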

\begin{proof}
The first case corresponds to the configuration where $Q_0$ and $Q_1$ intersect in a unique point in $(\CC^*)^2$. Therefore $\chi(Q_0 \cap Q_1) = -1$. In the second case, this unique intersection point is on $y=0$, and hence $\chi(Q_0 \cap \Q_1) = 0$.   
\end{proof}
The first case, where $\chi(Y_{W,1}) = 4$, corresponds to the minors of a hook vanishing. This is the case C in Figure \ref{fig:222_factor_relations}. 
The second case, where $\chi(Y_{W,1}) = 3$, corresponds to three minors touching a vertex of $W$ vanishing. This is the case F.

The next possibility is where exactly one of $F_{ \bullet 1 \bullet}$ and $F_{1 \bullet \bullet}$ as well as $H$ vanish. However, both lead to a configuration where the minors of a hook and the hyperdeterminant vanish. This forces the minors of the cubic frame containing the hook to vanish. However, one of these minors is $F_{\bullet \bullet 0}$, and therefore $Q_0$ would be singular; a contradiction to our assumption. So we skip this case.

\subsubsection*{\underline{$F_{\bullet 1 \bullet} = F_{1 \bullet \bullet} =0$ and $H \neq 0$}}
As in Lemma \ref{lem:hook}, if the remaining two minors do not vanish, then $Q_0$ and $Q_1$ do not intersect 
in $\CC^2$. This is equivalent to $L_1$ and $L_2$ being the asymptotes of $Q_0$. 
\begin{proposition} \label{prop: corner}
Suppose the quadric $Q_0$ is nonsingular and $Q_1$ is singular. Further suppose that both $F_{\bullet 1 \bullet}$ and $F_{1 \bullet \bullet}$ vanish, but $H$, $F_{0 \bullet \bullet}$ and $F_{\bullet 0 \bullet}$ do not vanish. Then $Q_0$ and $Q_1$ do not intersect in $\CC^2$ and $\chi(Y_{W,1}) = 3$.
\end{proposition}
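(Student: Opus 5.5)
The plan follows the template of Proposition \ref{prop: f1dotdot=0} and Lemma \ref{lem:hook}, now with $Q_1$ singular. Since $F_{\bullet \bullet 1}=0$, we have $Q_1 = L_1 \cup L_2$ with $L_1=\{x=-w_{011}/w_{111}\}$ and $L_2=\{y=-w_{101}/w_{111}\}$. Then $\chi(Q_1)=-1$, and by the preceding lemma on quadrics $\chi(Q_0)=-2$. Using $\chi(Y_{W,1}) = -\bigl(\chi(Q_0)+\chi(Q_1)-\chi(Q_0\cap Q_1)\bigr)$, it therefore suffices to show $Q_0\cap Q_1=\varnothing$ in $\CC^2$. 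That gives $\chi(Q_0\cap Q_1)=0$ and hence $\chi(Y_{W,1}) = 3$.

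To prove the intersection is empty, I would again use the combination \eqref{eq:combine f_0 and f_1:}. Since $F_{\bullet 1 \bullet}=F_{1 \bullet \bullet}=0$, the linear terms drop out, and $w_{111}f_0 - w_{110}f_1$ reduces to the constant $c = w_{000}w_{111}-w_{110}w_{001}$. Any common zero of $f_0,f_1$ in $\CC^2$ would force $c=0$. The argument in the proof of Lemma \ref{lem:hook} does not use nonsingularity of $Q_1$. It only needs the three $2\times 2$ determinants of a $2\times 3$ matrix built from the columns $(w_{000},w_{001})$, $(w_{010},w_{011})$, $(w_{110},w_{111})$, all with nonzero entries. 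Its conclusion is that, given $F_{\bullet 1 \bullet}=0$, we have $c\neq 0$ if and only if $F_{0\bullet\bullet}\neq 0$. Since $F_{0\bullet\bullet}\neq 0$ by hypothesis, $c\neq 0$ and the quadrics are disjoint in $\CC^2$, so a fortiori in $(\CC^*)^2$.

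As a geometric sanity check, I would also verify the claim directly from the line description. $L_1$ meets $Q_0$ where $f_0(-w_{011}/w_{111},y)=0$, a linear equation in $y$ whose $y$-coefficient is proportional to $F_{1\bullet\bullet}$ (after using $F_{\bullet\bullet 1}=0$) and therefore vanishes. Its constant term is a nonzero multiple of $c$ or of $F_{0\bullet\bullet}$, so there is no solution and $L_1$ is an asymptote of $Q_0$. The same computation with $F_{\bullet 1\bullet}=0$ handles $L_2$. This matches the remark preceding the proposition.

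The main obstacle is bookkeeping rather than any conceptual step. One must check that the $2\times 3$ matrix used in Lemma \ref{lem:hook} really has columns whose pairwise determinants are $F_{0\bullet\bullet}$, $F_{\bullet 1\bullet}$ and $c$ up to sign; the displayed matrix there has a typo in its second row. One must also confirm that this Plücker-type relation still holds when $F_{\bullet\bullet 1}=0$. Since the relation is a polynomial identity among nonzero entries, it does, and I would verify it by a direct two-line expansion.
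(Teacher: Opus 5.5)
Your proof is correct and follows the paper's route. The paper also reuses the argument of Lemma~\ref{lem:hook}, which never uses nonsingularity of $Q_1$, to get $Q_0\cap Q_1=\varnothing$ in $\CC^2$, and then computes $\chi(Y_{W,1}) = -(-2-1-0) = 3$. There is one harmless slip in your optional line check: you swapped the two minors. On $L_1$ the $y$-coefficient of $f_0$ is $F_{\bullet 1\bullet}/w_{111}$, and on $L_2$ the $x$-coefficient is $F_{1\bullet\bullet}/w_{111}$; both vanish by hypothesis, so the conclusion is unaffected.
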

The above case, where $\chi(Y_{W,1}) = 3$, corresponds to three minors touching a common vertex of $W$ vanishing. This is the case F in Figure \ref{fig:222_factor_relations}.

Finally, we see that we need to skip the possibility where $F_{\bullet 1 \bullet} = F_{1 \bullet \bullet} = H = 0$, since as before, this forces $Q_0$ to be singular. 

\subsection{Both $Q_0$ and $Q_1$ singular}
In the last few cases, we need to consider both $Q_0$ and $Q_1$ as unions of lines: $Q_0 = K_1 \cup K_2 = 
\{x = -\frac{w_{010}}{w_{110}} \} \cup
\{y = -\frac{w_{100}}{w_{110}} \}$ and 
$Q_1 = L_1 \cup L_2 = 
\{x = -\frac{w_{011}}{w_{111}} \} \cup
\{y = -\frac{w_{101}}{w_{111}} \}$. 
Based on our work in Section \ref{sec:factors} and our discussion so far in this section,  we note that as soon as a single extra factor of the principal $A$-determinant $E_A$ vanishes, then all minors of a cubic frame containing the minors $F_{\bullet \bullet 0}$ and $F_{\bullet \bullet 1}$, as well as the hyperdeterminant $H$ vanish. 
If yet another one of the remaining two factors vanish, then all factors of $E_A$ vanish. Therefore, we just have to deal with these three possibilities.
We summarize the corresponding result in the following.
\begin{proposition} \label{prop: both-singular}
Suppose both quadrics $Q_0$ and $Q_1$ are singular. 
\begin{itemize}
    \item If no other factor of $E_A$ vanishes, then $\chi(Y_{W,1}) = 4$.
    \item If $F_{0 \bullet \bullet} = F_{1 \bullet \bullet} = H =0$ or $F_{\bullet 0 \bullet} = F_{ \bullet 1 \bullet} = H =0$  and no other factor vanishes, then $\chi(Y_{W,1}) = 2$.
    \item If all factors of $E_A$ vanish, then $\chi(Y_{W,1})=1$.
\end{itemize}
\end{proposition}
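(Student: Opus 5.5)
We prove Proposition \ref{prop: both-singular} directly from the explicit line decompositions, using Corollary \ref{cor: quadrics} in the form $\chi(Y_{W,1}) = -\left(\chi(Q_0)+\chi(Q_1)-\chi(Q_0\cap Q_1)\right)$. Write $Q_0 = K_1\cup K_2$ with $K_1=\{x=a_0\}$, $K_2=\{y=b_0\}$, where $a_0=-w_{010}/w_{110}$ and $b_0=-w_{100}/w_{110}$. Similarly write $Q_1=L_1\cup L_2$ with $L_1=\{x=a_1\}$, $L_2=\{y=b_1\}$, where $a_1=-w_{011}/w_{111}$ and $b_1=-w_{101}/w_{111}$. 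All four constants lie in $\CC^*$.

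Inside $(\CC^*)^2$, each line is a copy of $\CC^*$, and $K_1\cap K_2$ is the single point $(a_0,b_0)$. Hence $\chi(Q_0)=\chi(Q_1)=0+0-1=-1$, as in the earlier lemma, and so
$$\chi(Y_{W,1}) = 2+\chi(Q_0\cap Q_1).$$
The whole proof therefore reduces to computing $\chi(Q_0\cap Q_1)$.

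We first translate coincidences of the lines into vanishing of minors. We have $a_0=a_1$ if and only if $w_{010}w_{111}-w_{011}w_{110}=0$, and this quantity is (up to sign) $F_{\bullet 1\bullet}$. Likewise $b_0=b_1$ if and only if $w_{100}w_{111}-w_{101}w_{110}=0$, which is $F_{1\bullet\bullet}$. By the symmetry exchanging the indices $0$ and $1$ in the first two modes (equivalently, the $x$- and $y$-lines), the configurations with $F_{\bullet 0\bullet}$ or $F_{0\bullet\bullet}$ in place of $F_{\bullet 1\bullet}$ or $F_{1\bullet\bullet}$ are handled the same way. We then use the vanishing implications from Section \ref{sec:factors} and Proposition \ref{prop:factors 2x2x2}. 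Once $F_{\bullet\bullet 0}=F_{\bullet\bullet 1}=0$, the vanishing of any single further factor forces a full cubic frame together with $H$. So the only patterns are: no extra factor; $\{F_{\bullet 0\bullet},F_{\bullet 1\bullet},H\}$ or $\{F_{0\bullet\bullet},F_{1\bullet\bullet},H\}$; or everything. To confirm that, for example, $F_{\bullet 1\bullet}=0$ propagates to $F_{\bullet 0\bullet}=H=0$ here, we would compute the saturation $\langle F_{\bullet\bullet0},F_{\bullet\bullet1},F_{\bullet1\bullet}\rangle:(\prod w_{ijk})^\infty$.

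We then treat the three cases in turn.
\begin{enumerate}
\item If no further factor vanishes, then $a_0\neq a_1$ and $b_0\neq b_1$. Then $Q_0\cap Q_1=\{(a_0,b_1),(a_1,b_0)\}$, two distinct points of $(\CC^*)^2$, so $\chi(Q_0\cap Q_1)=2$ and $\chi(Y_{W,1})=4$.
\item Suppose $F_{\bullet0\bullet}=F_{\bullet1\bullet}=H=0$ and $F_{1\bullet\bullet}\neq0$. Then $K_1=L_1$ but $K_2\neq L_2$. Both points $(a_0,b_1)$ and $(a_1,b_0)$ lie on $K_1$, so $Q_0\cap Q_1=K_1\cong\CC^*$. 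This gives $\chi(Q_0\cap Q_1)=0$ and $\chi(Y_{W,1})=2$. The other pattern is the same with the roles of $x$ and $y$ swapped.
\item If all factors vanish, then $Q_0=Q_1$, so $\chi(Q_0\cap Q_1)=-1$ and $\chi(Y_{W,1})=1$.
\end{enumerate}

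The main obstacle is the bookkeeping step. We must make sure that the geometric coincidences ($K_1=L_1$, $K_2=L_2$) correspond exactly to the stated factor-vanishing patterns, and that no other pattern is realizable once both slices are singular. For this we rely on the saturation computations summarized in Figure \ref{fig:222_factor_relations} rather than on new arguments.
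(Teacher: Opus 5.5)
Your proof is correct and follows the paper's argument: $\chi(Q_0)=\chi(Q_1)=-1$, and $\chi(Q_0\cap Q_1)$ equals $2$, $0$ or $-1$ according as no line pair, exactly one line pair, or both line pairs coincide. The differences are minor. You compute the intersection points $(a_0,b_1),(a_1,b_0)$ directly instead of citing Lemma \ref{lem:two intersection points}, and you make explicit the identifications $a_0=a_1 \Leftrightarrow F_{\bullet 1\bullet}=0$ and $b_0=b_1 \Leftrightarrow F_{1\bullet\bullet}=0$, which the paper leaves implicit. The saturation computation you flag is not needed for the statement as posed, because each bullet already fixes which factors vanish.
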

\begin{proof}
In the first case, Lemma \ref{lem:two intersection points} implies that $Q_0$ and $Q_1$ intersect in two points in $(\CC^*)^2$. Since $\chi(Q_0) = \chi(Q_1) = -1$, we conclude that $\chi(Y_{W,1}) = 4$. The second case corresponds to the situation where $K_1 = L_1$ and $K_2 \neq L_2$ or $K_1 \neq L_1$ and $K_2 = L_2$. Since the Euler characteristic of a line of the form $x=a \neq 0$ or $y=b \neq 0$ in $(\CC^*)^2$  is $0$, we get $\chi(Y_{W,1}) = 2$. Finally, if all factors of $E_A$ vanish, this forces 
all coefficients of $f_1$ to be a constant multiple of the corresponding coefficients of $f_0$. Therefore, $Q_0=Q_1$, and $\chi(Y_{W,1}) = 1$.
\end{proof}
Now we put together all the results above in the proof of Theorem \ref{thm:P1 x P1 x P1}.
\begin{proof}[Proof of Theorem \ref{thm:P1 x P1 x P1}:] When no factor of $E_A$ vanishes the ML degree is maximum possible and is equal to $\deg(\mathcal{X}_1) = 6$. 
By the second case of Proposition \ref{prop: two intersection points},
the first case of Proposition \ref{prop:tangency}, the first case of Proposition \ref{prop: f1dotdot=0},
and the first case of Proposition \ref{prop: nonsingular+singular} together with the symmetries of our $2 \times 2 \times 2$ tensor $W$, we see that when exactly a single factor of $E_A$ vanishes, then $\chi(Y_{W,1}) = 5$. By the last case of Proposition \ref{prop: two intersection points},  the second case of Proposition  \ref{prop:tangency},  the second case of Proposition \ref{prop: f1dotdot=0}, Proposition \ref{prop: minor+hyper}, Proposition \ref{prop: hook}, the second case of Proposition \ref{prop: nonsingular+singular}, Proposition \ref{prop: one-singular + hyper}, the first case of Proposition \ref{prop: one-singular hook}, and the first case Proposition \ref{prop: both-singular} together with the symmetries of $W$, we conclude that the two minors in every mirror, every hook, and any minor together with the hyperdeterminant can vanish on their own. This accounts for all $\binom{7}{2}$ pairs of factors of $E_A$. In all of these cases $\chi(Y_{W,1})=4$. The only possibilities when exactly three factors vanish appear in the last case of Proposition \ref{prop: nonsingular+singular}, the second case of Proposition \ref{prop: one-singular hook}, and Proposition \ref{prop: corner}. These are cases when three $2$-minors meeting at a corner of the tensor $W$ vanish. By symmetry there are eight of them and the Euler characteristic is $\chi(Y_{W,1})=3$. 
We do not find a possibility where a subset of four factors of $E_A$ vanish. The only time five factors can vanish simultaneously is given by Proposition \ref{prop: frame+hyper} and the second case of
Proposition \ref{prop: both-singular}. These correspond the $2$-minors in a cubic frame and the hyperdeterminant vanishing. By symmetry there are three such cases and then $\chi(Y_{W,1})=2$. We also do not find a possibility where all but one factor of $E_A$ simultaneously vanish. Finally, when all factors vanish we see that $\chi(Y_{W,1})=1$.
\end{proof}

\section{Realizability of ML degrees} \label{sec:realizability}

In this section we give a positive answer to the question of the realizability of $\mldeg(X_{W,n})$ by proving Theorem \ref{thm:realizability}. Throughout this section, $Q_k$ will denote the quadric curve in $(\CC^*)^2$ defined by $f_k = w_{00k} + w_{10k}x + w_{01k}y + w_{11k} xy$ for $k=0,\ldots,n$ as in \eqref{eq:f}.
In order to prove the result, we   construct a set of scalings $W \in (\CC^*)^{4n + 4}$ whose corresponding arrangement of quadric curves achieve every possible Euler characteristic.
We start with the following simple corollary. 
\begin{corollary}\label{cor: point_formula}
 Let $W \in (\CC^*)^{4n + 4}$ be such that $W$ does not cause any %$2 \times 2 \times 2$ or 
 $2 \times 2 \times 3$ hyperdeterminantal factors of $E_A$ to vanish, then
    we have the following equality:

    $$\mldeg(X_{W,n}) \, =  (-1)^{n+1}\chi(Y_{W,n}) \, = -\left(\sum_{k=0}^n \chi(Q_k) - \sum_{\substack{j,k = 0 \\ j \neq k}}^n \abs{Q_j \cap Q_k}\right) $$
\end{corollary}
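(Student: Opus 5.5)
The plan is to combine Corollary \ref{cor: quadrics} with inclusion--exclusion for the union $Q_0\cup\cdots\cup Q_n\subset(\CC^*)^2$. The hypothesis on the $2\times2\times3$ hyperdeterminants is used to kill every term of order at least three and to force the pairwise intersections to be finite.

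\emph{Signs.} By Proposition \ref{prop:ML degree = Euler char}, $\mldeg(X_{W,n})=(-1)^{n+1}\chi(Y_{W,n})$. By Corollary \ref{cor: quadrics}, $\chi(Y_{W,n})=(-1)^n\chi(V(f_0\cdots f_n))$ with $V(f_0\cdots f_n)=\bigcup_k Q_k$. Together these give $\mldeg(X_{W,n})=-\chi\bigl(\bigcup_k Q_k\bigr)$. Inclusion--exclusion for Euler characteristics of constructible sets then writes $\chi(\bigcup_k Q_k)$ as $\sum_k\chi(Q_k)$, minus the pairwise terms $\chi(Q_j\cap Q_k)$, plus the terms of higher order, with alternating signs. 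This is exactly the identity of Corollary \ref{cor:euler_characteristic_via_intersections}, since $C_I=\bigcap_{k\in I}Q_k$.

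\emph{Higher order terms.} For $|I|\ge3$ we have $C_I\subseteq V_I$, where $V_I$ is the closure in $\PP^1\times\PP^1$. Take any $i,j,k\in I$. By assumption $H_{ijk}\neq0$, so Lemma \ref{lem: no solution} gives $V_I=\varnothing$. Hence every term with $|I|\ge3$ vanishes and only the single and pairwise terms survive.

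\emph{Pairwise terms.} It remains to show that each $Q_j\cap Q_k$ ($j\neq k$) is finite, so that $\chi(Q_j\cap Q_k)=|Q_j\cap Q_k|$. This is the step I expect to need real care. Suppose the closures $V(q_j),V(q_k)\subset\PP^1\times\PP^1$ share a curve component $C$. Then $C$ is either a $(1,1)$-curve (when $q_j,q_k$ are proportional) or a ruling line of bidegree $(1,0)$ or $(0,1)$. Pick any third index $l$; this requires $n\ge2$. The curve $V(q_l)$ has bidegree $(1,1)$, so it has positive intersection number with $C$, and therefore $V(q_l)\cap C\neq\varnothing$. This makes $V_{\{j,k,l\}}\neq\varnothing$, so $H_{jkl}=0$ by Lemma \ref{lem: no solution}, a contradiction. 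Thus all pairwise intersections are zero-dimensional and their Euler characteristic is their cardinality.

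For $n=1$ the hypothesis is vacuous. In that case the formula is to be read with $|Q_0\cap Q_1|$ replaced by $\chi(Q_0\cap Q_1)$, as computed in Section \ref{sec:P1 x P1 x P1}. The sum over $j\neq k$ in the statement should be taken over unordered pairs, i.e.\ $j<k$, matching the inclusion--exclusion count. Substituting everything into $\mldeg(X_{W,n})=-\chi(\bigcup_k Q_k)$ yields the stated formula.
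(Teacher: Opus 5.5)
Your proposal is correct and follows the same route as the paper. Corollary \ref{cor: quadrics} plus inclusion--exclusion gives the formula, and every term of order at least three vanishes because $H_{ijk}\neq 0$ forces $V_I=\varnothing$ in $\PP^1\times\PP^1$. The points you add are all valid and fill in details the paper's one-line proof leaves implicit: the bidegree argument that the pairwise intersections are finite when $n\geq 2$ (so $\chi(Q_j\cap Q_k)=|Q_j\cap Q_k|$), reading the pair sum over $j<k$ (as the paper's own count in Lemma \ref{lem: hook_formula} does), and the caveat that for $n=1$ one needs $\chi(Q_0\cap Q_1)$, since $Q_0=Q_1$ or a shared line can occur.
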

\begin{proof}
    By Corollary \ref{cor: quadrics}, 
    we need to compute $\chi(Q_0 \cup Q_1 \cup \cdots \cup Q_n)$. Using inclusion/exclusion and the fact that a triple intersection $Q_i \cap Q_j \cap Q_k$ in $\CC^2$ is nonempty if and only if the corresponding
hyperdeterminant $H_{ijk}$ vanishes proves the result.
\end{proof}

We recall that, 
for $0 \leq j\neq k \leq n$, a choice of $W$ such that $F_{\bullet 1 (j,k)} = F_{0 \bullet (j,k)} = 0$ and no other relations among their coefficients are satisfied gives us two nonsingular quadrics with
$Q_j \cap Q_k = \emptyset$ in the torus.
It follows from Lemma \ref{lem:unique-point} that $Q_j$ and $Q_k$ intersect in a unique point in $\CC^2$ and this point is on $y=0$.
Likewise, if we pick $W$ such that $F_{\bullet 0 (j,k)} = F_{1 \bullet (j,k)} = 0$, then the intersection point is on $x=0$. Figure \ref{fig:degenerate_quadrics} illustrates this situation.

 \begin{figure}[H]
        \centering
        \includegraphics[width=0.4\linewidth]{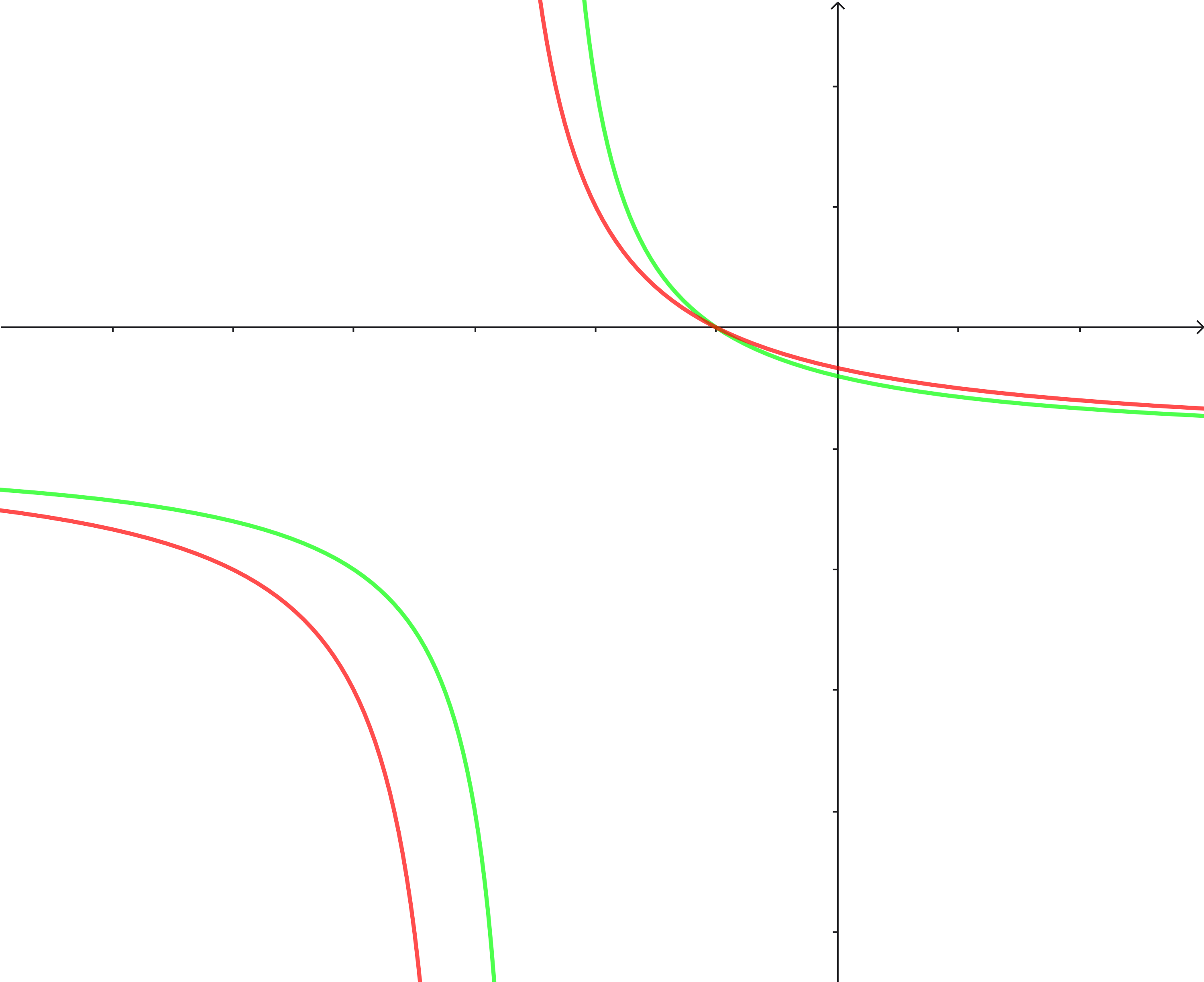}
        \caption{Two nonsingular quadrics $Q_j, Q_k$ where $F_{\bullet 1 (j,k)} = F_{0 \bullet (j,k)} = 0$}
        \label{fig:degenerate_quadrics}
    \end{figure}

\begin{definition}
    The set $\mathrm{altH}(n)$ of alternating hook minors consists of the factors of $E_A$ given by $F_{\bullet 1 (a,a+1)},F_{0 \bullet (a,a+1)},F_{\bullet 0 (b,b+1)},F_{1 \bullet (b,b+1)}$ as $a$ ranges over all odd numbers in $[n]$ and $b$ ranges over all even numbers in 
    %$\{0\} \cup [n]$
    $[\overline{n}]$. See Figure \ref{fig:alternating-hook}.
\end{definition}

 \begin{figure}[H]
        \centering
        \includegraphics[width=0.4\linewidth]{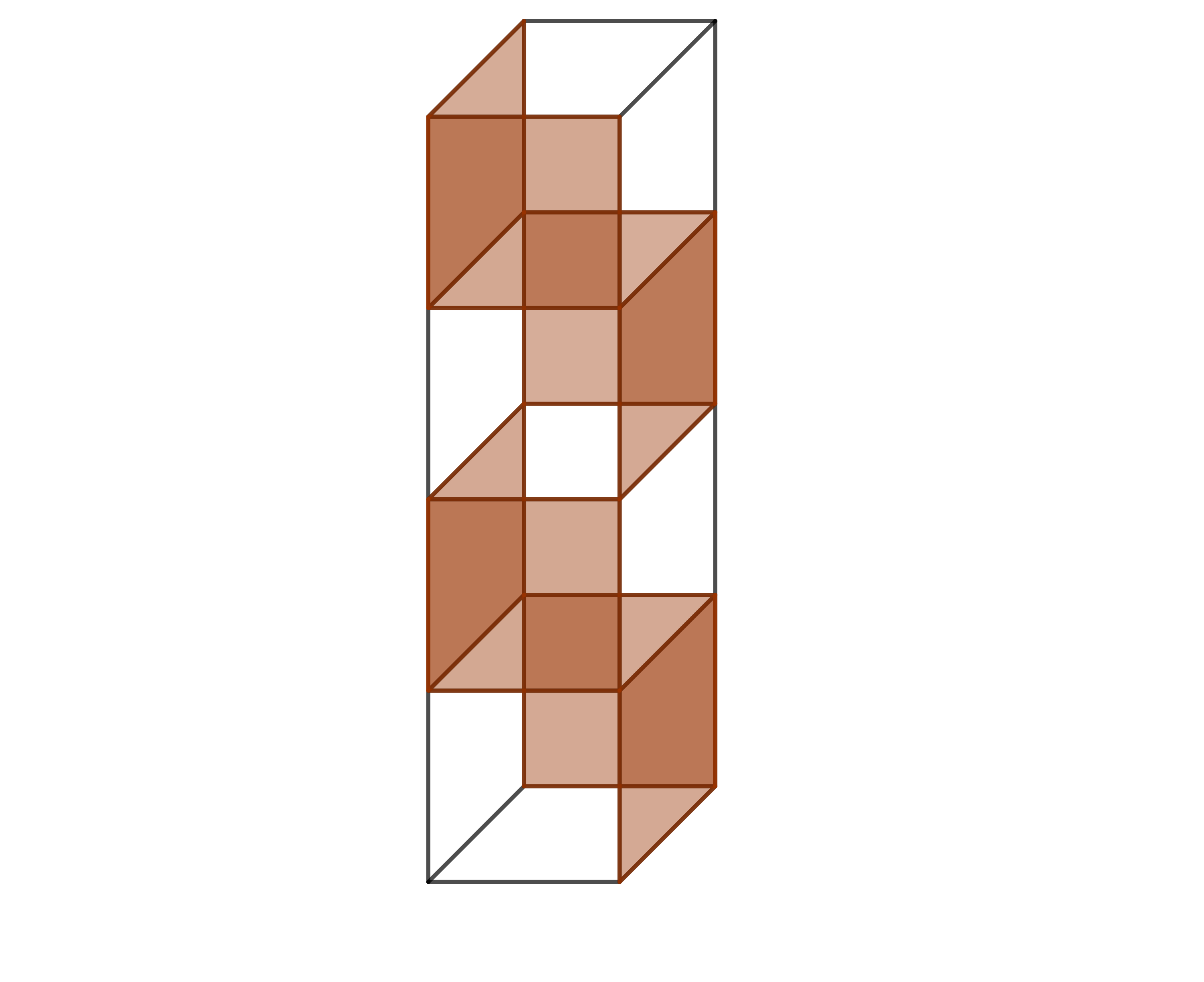}
        \caption{The alternating hook minors $\mathrm{altH}(4)$}
        \label{fig:alternating-hook}
    \end{figure}

The 
alternating hook minors of $E_A$ enjoy the following nice property.

\begin{proposition} \label{prop: hooks_nonvanishing}
   For any subset $S \subset \mathrm{altH}(n) \cup \{F_{\bullet \bullet n} \}$,
  if $W \in (\CC^*)^{4n+4}$  is chosen to be a generic solution to the system given by $S$,
   then no other factor of $E_A$ will vanish at $W$.  
\end{proposition}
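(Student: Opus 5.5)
The plan is to exploit the fact that the conditions in $\mathrm{altH}(n)\cup\{F_{\bullet\bullet n}\}$ form a \emph{chain}: each minor involves only two consecutive slices $W_{\bullet\bullet a}$, $W_{\bullet\bullet a+1}$, or the single slice $W_{\bullet\bullet n}$. Fix $S$ and let $V_S\subset(\CC^*)^{4n+4}$ be the solution set of $S$. The first step is to show that $V_S$ is irreducible by parametrizing it explicitly, slice by slice.

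For odd $a$, the conditions $F_{\bullet 1(a,a+1)}=F_{0\bullet(a,a+1)}=0$ share the entry $w_{01\cdot}$. On the torus they say that
\[(w_{00,a+1},w_{01,a+1},w_{11,a+1})=t\,(w_{00a},w_{01a},w_{11a})\]
for some $t\in\CC^*$, while $w_{10,a+1}$ stays free. Symmetrically, for even $b$ the conditions $F_{\bullet 0(b,b+1)}=F_{1\bullet(b,b+1)}=0$ scale $w_{00},w_{10},w_{11}$ and leave $w_{01,b+1}$ free. If $S$ contains only one of the two minors of a hook, the corresponding constraint is a single bilinear condition, which is again a rational parametrization of slice $a+1$ given slice $a$. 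Thus $V_S$ is a tower of (open subsets of) rational fibrations over the free slice $W_{\bullet\bullet 0}$.

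The extra condition $F_{\bullet\bullet n}=0$ must be inserted at the top. It is one more equation on the last slice, which already has at most one free entry (or is determined up to scaling). I would check by hand that it cuts out an irreducible, nonempty, rational locus. For example, solve it for the free entry of $W_{\bullet\bullet n}$, or for the scaling parameter $t$, noting that $F_{\bullet\bullet n}$ is linear in the free entry. Granting irreducibility, each factor $G$ of $E_A$ not in $S$ either vanishes identically on $V_S$ or is nonzero at a generic point. So it suffices to exhibit, for each such $G$, a single point of $V_S$ with $G\neq 0$. Since there are finitely many factors, genericity then handles all of them at once.

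The second step is to make this witness check uniform in $n$. Every factor of $E_A$ (Proposition~\ref{prop:factors_principal_A_det}) depends on at most three slices $k_1<k_2<k_3$. The image of $V_S$ under projection to those slices is determined only by
\begin{itemize}
\item[(i)] which constraints of $S$ lie between consecutive chosen indices,
\item[(ii)] the parity of the indices, and
\item[(iii)] whether the gaps are $1$ or $\ge 2$, and whether $k_3=n$ with $F_{\bullet\bullet n}\in S$.
\end{itemize}
The reason is that along a longer gap, the alternating free entries $w_{10}$ and $w_{01}$ are both refreshed. The entries $w_{00}$ and $w_{11}$, however, are always scaled by a common factor, so the ratio $w_{00k}/w_{11k}$ is constant on the relevant stretch. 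Consequently, the projection for any gap $\ge 2$ coincides with the projection for gap exactly $2$ with the same endpoint parities. This reduces the verification to finitely many local patterns, all realized inside $n\le 7$ or so. I would then check these patterns by explicit integer witnesses, or symbolically using the parametrization.

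The main obstacle, I expect, is the hyperdeterminantal factors $H_{jk}$ and $H_{jkl}$ in the ``far apart'' patterns. Because of the preserved ratio $w_{00}/w_{11}$, the projection is \emph{not} dominant there, so non-vanishing is not automatic and has to be certified. For $H_{jk}$ I would plug the parametrization into the explicit formula and exhibit a nonzero specialization. For $H_{jkl}$ I would use Lemma~\ref{lem: no solution}: it suffices to produce parameter values for which $q_j,q_k,q_l$ have no common zero in $\PP^1\times\PP^1$. A convenient way is to place the pairwise intersections on different coordinate axes, as in the discussion following Corollary~\ref{cor: point_formula}. For the $2$-minors I would use Lemma~\ref{lem: minors H} as a consistency check, since no square cup or pair of minors on one face occurs in $S$.
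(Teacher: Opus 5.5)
Your proposal is correct, and it takes a genuinely different route from the paper.

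\textbf{The paper's route.} The paper's proof is a three-sentence appeal to the vanishing catalogue of Section~\ref{sec:factors}:
\begin{itemize}
\item $S$ contains no cup and no two minors on one face, so no further $2$-minor vanishes;
\item each $2\times2\times2$ subtensor has at most a pair of vanishing minors, so by Proposition~\ref{prop:factors 2x2x2} no $H_{ij}$ vanishes;
\item Lemma~\ref{lem: minors H} rules out every $H_{ijk}$.
\end{itemize}

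\textbf{Your route.} You parametrize $V_S$ slice by slice. This exhibits $V_S$ as a torus of dimension $4n+4-|S|$, which is what gives \emph{generic solution} a meaning; the paper takes this for granted. You then use the chain structure to reduce to finitely many local patterns, and certify each remaining factor by a witness.

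\textbf{What each approach buys.} Your approach catches something the paper's argument passes over. The catalogue classifies loci cut out by factors of $E_A$. But for non-adjacent slices $i<j$ with all intervening hooks full, the image of $V_S$ lies in $\{w_{00i}w_{11j}=w_{00j}w_{11i}\}$. That is not such a locus, so Proposition~\ref{prop:factors 2x2x2} and Lemma~\ref{lem: minors H} do not literally settle $H_{ij}$ or $H_{ijk}$ there. Similarly, when $F_{\bullet\bullet n}\in S$, the top subtensor carries three vanishing minors forming a corner; this is covered by case c) of Proposition~\ref{prop:factors 2x2x2}, not by a pair count. The paper's approach buys brevity by outsourcing everything to earlier computations. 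Yours is complete only once the witness checks are actually carried out.

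\textbf{The gap reduction.} For gaps $\ge 2$ the composite correspondence between the endpoint slices does not depend on parity at all:
\begin{itemize}
\item if every intervening hook lies fully in $S$, then $w_{00}$ and $w_{11}$ are scaled by a common factor and $w_{01},w_{10}$ are free;
\item otherwise there is no relation.
\end{itemize}
So do not phrase the reduction as matching a gap-$2$ pattern with the same endpoint parities; that is impossible for odd gaps, and it is not needed.

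\textbf{The witness checks.} These do go through.
\begin{itemize}
\item For $H_{ij}$: take slice $i$ equal to $(1,1,1,1)$ and slice $j\ge i+2$ equal to $(2,4,1,2)$, with entries ordered $w_{00},w_{01},w_{10},w_{11}$. This is a point of the projection, even when $j=n$ and $F_{\bullet\bullet n}\in S$, and it gives $H_{ij}=1$.
\item For $H_{jkl}$ when the three slices share the ratio $w_{00}/w_{11}=\rho$: the vector $(1,0,0,-\rho)$ lies in the kernel of $W^{(3)}_{\{j,k,l\}}$ and is not a rank-one tensor. Hence $H_{jkl}\neq 0$ wherever that flattening has rank $3$. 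This is an open condition, easily witnessed in each such pattern because the fresh entries $w_{01},w_{10}$ vary independently.
\end{itemize}
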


\begin{proof}
   Since $S$ only contains $2 \times 2$ determinant factors of $E_A$ and it contains no cups as well as no overlapping minors on a face of $W$, no other $2 \times 2$ determinant factors of $E_A$ vanishes. 
   Furthermore, since for any choice of distinct $i,j \in [\overline{n}]$, the subtensor given by the coefficients of $f_i,f_j$ makes at  most a pair of minors in that subtensor vanish, a generic $W$ cannot cause any $2 \times 2 \times 2$ hyperdeterminantal factor of $E_A$ to vanish.
   Lastly, 
   for any pairwise distinct $i,j,k \in [\overline{n}]$, by Lemma \ref{lem: minors H}, we do not have enough minors in the corresponding $2 \times 2 \times 3$ subtensor vanish that would cause the hyperdeterminant $H_{ijk}$ to vanish. 
\end{proof}

A consequence of the above result is that we can use Corollary \ref{cor: point_formula} to compute  $\mldeg(X_{W,n})$ when $W$ satisfies the conditions stipulated by Proposition \ref{prop: hooks_nonvanishing}.

\begin{lemma}\label{lem: hook_formula}
For any subset $S \subset \mathrm{altH}(n) \cup \{ F_{\bullet \bullet n} \}$,
  if $W \in (\CC^*)^{4n+4}$  is chosen to be a generic solution to the system given by $S$,
 then $\mldeg(X_{W,n}) = (n+2)(n+1) - \mid S\mid$.
\end{lemma}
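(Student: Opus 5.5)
By Proposition \ref{prop: hooks_nonvanishing}, no $2\times2\times3$ hyperdeterminant vanishes at such a generic $W$, so Corollary \ref{cor: point_formula} applies. I would read it as
$$\mldeg(X_{W,n}) = -\sum_{k=0}^n \chi(Q_k) + \sum_{j<k} |Q_j\cap Q_k|,$$
counting each unordered pair once, since the formula comes from inclusion--exclusion. The plan is to start from the generic value $(n+1)(n+2)$ and show that each element of $S$ lowers exactly one term of this sum by exactly one.

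\textbf{Generic baseline.} With no factor of $E_A$ vanishing, every $Q_k$ is a nonsingular hyperbola, so $\chi(Q_k)=-2$ by the lemma on $g=a+bx+cy+dxy$. For each pair $j<k$, Lemma \ref{lem:two intersection points} shows that $Q_j$ and $Q_k$ meet in exactly two points of $(\CC^*)^2$. The total is therefore
$$2(n+1)+2\binom{n+1}{2}=(n+1)(n+2)=\deg(\mathcal X_n),$$
which agrees with Proposition \ref{prop:deficient-mldeg}.

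\textbf{Effect of each element of $S$.} First, if $F_{\bullet\bullet n}\in S$, then $Q_n$ is singular with $\chi(Q_n)=-1$, and this lowers the sum by one. Second, the minors in $\mathrm{altH}(n)$ only involve consecutive pairs $(a,a+1)$. The two hook minors attached to a given pair lie on different faces and never share a pair with another element of $\mathrm{altH}(n)$. So for a pair $(j,k)=(a,a+1)$ the vanishing pattern restricted to the subtensor $W^{jk}$ is empty, a single minor, or a full hook. By the genericity in Proposition \ref{prop: hooks_nonvanishing}, $H_{jk}\neq0$ and no other minor of $W^{jk}$ vanishes. The analysis of Section \ref{sec:P1 x P1 x P1} for two quadrics (Lemmas \ref{lem:two intersection points}, \ref{lem:unique-point}, \ref{lem:hook}) then gives the following counts in $(\CC^*)^2$:
\begin{itemize}
\item two intersection points if no minor of the pair vanishes;
\item one intersection point if exactly one minor vanishes;
\item zero intersection points if the hook vanishes.
\end{itemize}
In the hook case we use that the unique point in $\CC^2$ lies on an axis, as in the discussion preceding Figure \ref{fig:degenerate_quadrics}. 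Each single-minor case is either $F_{0\bullet(j,k)}$ or $F_{\bullet0(j,k)}$, which push one of the two points onto an axis, or $F_{1\bullet(j,k)}$ or $F_{\bullet1(j,k)}$, which merge the two points into one as in Lemma \ref{lem:unique-point}. Up to swapping $x$ and $y$, and $0$ and $1$, these are exactly the configurations treated there.

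\textbf{Main obstacle.} The delicate step is the interaction with $F_{\bullet\bullet n}$ when $Q_n$ is singular and the pair $(n-1,n)$ also carries alternating-hook minors. There the lemmas were proved for nonsingular quadrics, so I would use their singular-case analogues, Propositions \ref{prop: nonsingular+singular}, \ref{prop: one-singular hook} and \ref{prop: corner}. These show the intersection counts drop in the same way: one point per vanishing minor, and none for the hook, where the lines are the asymptotes of $Q_{n-1}$. The remaining check is that the hook together with $F_{\bullet\bullet n}$ forms a corner, case F, and not a square cup, so no further factors are forced and the counts stay additive.

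Summing the contributions, the quadric terms lose $[F_{\bullet\bullet n}\in S]$ and the intersection terms lose $|S\cap\mathrm{altH}(n)|$, which gives $\mldeg(X_{W,n})=(n+1)(n+2)-|S|$.
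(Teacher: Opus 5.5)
Your proposal is correct and follows essentially the paper's own argument: apply Corollary \ref{cor: point_formula} over unordered pairs (as the paper's count of $(n+1)n$ generic intersection points implicitly does), note that $F_{\bullet\bullet n}$ changes $\chi(Q_n)$ from $-2$ to $-1$, and check pair by pair that each alternating-hook minor removes exactly one torus intersection point. Two descriptive slips do not affect your counts: first, when $F_{1\bullet(j,k)}$ or $F_{\bullet 1(j,k)}$ vanishes alone, the second point escapes to infinity along a shared asymptote rather than merging with the first (merging would force $H_{jk}=0$); second, when $Q_n$ is singular, an alternating hook makes only one line of $Q_n$ an asymptote of $Q_{n-1}$, while the other line meets $Q_{n-1}$ on a coordinate axis, which is the second case of Proposition \ref{prop: one-singular hook} up to symmetry rather than Proposition \ref{prop: corner}.
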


\begin{proof}
    We prove this statement by directly counting intersection points and summing Euler characteristics according to Corollary \ref{cor: point_formula}.
    If $S$ contains $F_{\bullet \bullet n}$, then $\sum_{k=0}^n \chi(Q_k) = -(2n+1)$, since exactly one quadric is singular. Meanwhile, for every $2$-minor in $S$, the pair of quadrics whose coefficients overlap with those of this minor will either have one of their two intersection points on an axis or share an asymptote, meaning that the number of pairwise intersection points among the quadrics will be $((n+1)n-(|S|-1))$.
    Corollary \ref{cor: point_formula} implies that $\mldeg(X_{W,n}) = (n+2)(n+1) - |S|$.
    If $S$ does not contain $F_{\bullet \bullet n}$, then  $\sum_{k=0}^n \chi(Q_k) = -(2n+2)$.
    By the same argument as for the first case, the number of pairwise intersection points among the quadrics  will be $((n +1)n-|S|)$. Therefore $\mldeg(X_{W,n}) = (n+2)(n+1)-|S|$.
\end{proof}

\begin{example}

    We give an example for $n=2$ where we pick $S = \mathrm{altH}(2) \cup \{ F_{\bullet \bullet 2} \}$. Note that $|S| = 5$. 
    We use $W$ with quadrics 
       \begin{align*}
     f_0 &= 2 + x + 5y + 2xy, \\
     f_1 &= 2 + 2x + 5y + 2xy, \\
     f_2 &= 1 + x + y + xy. 
    \end{align*} 
    We see that $Q_0$ and $Q_1$ are nonsingular while $Q_2$ is singular. Therefore, their Euler characteristics are $-2,-2,-1$ respectively.  Figure \ref{fig:three quadrics} shows the arrangement of these three quadrics with two pairwise intersection points in the torus. 
\begin{figure}[H]
        \centering
        \includegraphics[width=0.4\linewidth]{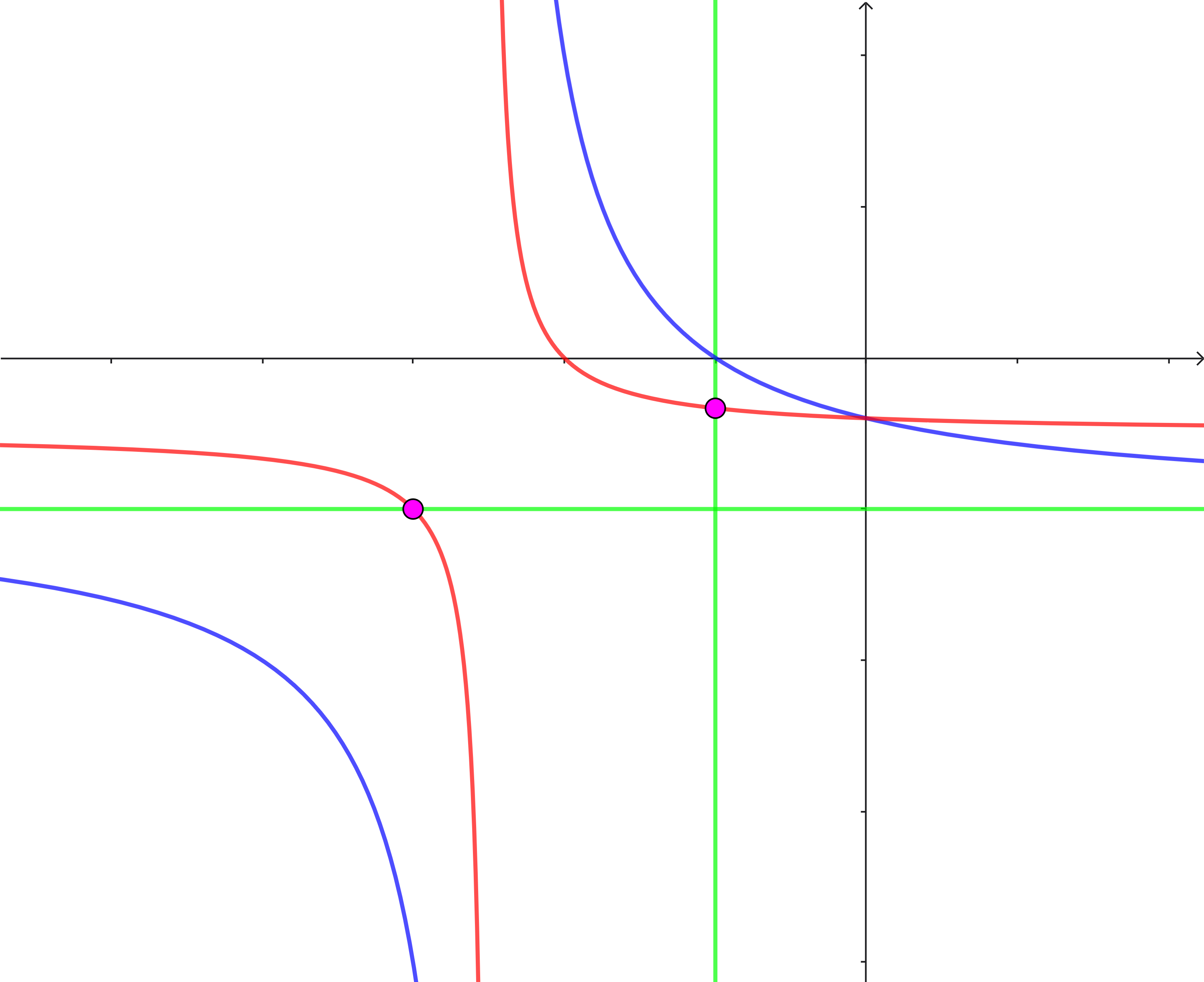}
        \caption{Three quadrics with $S= \mathrm{altH}(2) \cup F_{\bullet \bullet 2}$.}
        \label{fig:three quadrics}
    \end{figure}
\end{example}
\noindent We conclude that $\mldeg(X_{W,2}) = |-2-2-1-2| = 7$,
which is equal to $4 \cdot 3 - 5$.

\begin{definition}
    The feasible set $ \mathcal{A}_n \subset [(n+2)(n+1)]$ is the subset with the property that for every $\alpha \in \mathcal{A}_n$, there exists some $W \in (\CC^*)^{4n+4}$ such that $\mldeg(X_{W,n}) = \alpha$.
\end{definition}

%We make use of the following result to proceed towards answering the question of achievability.

\begin{lemma}\label{lem:inductive_euler_char}
    For all $n \geq 2$, $\mathcal{A}_{n-1} \subset \mathcal{A}_n$
\end{lemma}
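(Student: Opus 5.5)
The plan is to embed any scaling for $\PP^1 \times \PP^1 \times \PP^{n-1}$ into one for $\PP^1 \times \PP^1 \times \PP^n$ by \emph{repeating a quadric}, so that the union of quadric curves in the torus, and hence the ML degree, does not change.

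First I rewrite the ML degree purely in terms of the union of quadric curves, in a form where $n$ only enters through which curves occur. Combining Proposition \ref{prop:ML degree = Euler char} with Corollary \ref{cor: quadrics} gives
$$\mldeg(X_{W,n}) = (-1)^{n+1}\chi(Y_{W,n}) = (-1)^{n+1}(-1)^n \chi\bigl(V(f_0f_1\cdots f_n)\bigr) = -\chi\Bigl(\bigcup_{k=0}^n Q_k\Bigr),$$
where the $Q_k \subset (\CC^*)^2$ are as at the start of Section \ref{sec:realizability}. The essential point is that the sign $(-1)^{n+1}(-1)^n = -1$ is independent of $n$. So the ML degree depends only on the subset $\bigcup_k Q_k \subseteq (\CC^*)^2$.

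Now let $\alpha \in \mathcal{A}_{n-1}$, and pick $W' = (w'_{ijk}) \in (\CC^*)^{4n}$ with $\mldeg(X_{W',n-1}) = \alpha$. Its quadrics are $f'_0, \dots, f'_{n-1}$. I define $W \in (\CC^*)^{4n+4}$ by
$$w_{ijk} = w'_{ijk} \text{ for } k \le n-1, \qquad w_{ijn} = w'_{ij0}.$$
In other words, $f_n = f_0$. All entries still lie in $\CC^*$, so $W$ is an admissible scaling. Then $Q_n = Q_0$, so
$$\bigcup_{k=0}^n Q_k = \bigcup_{k=0}^{n-1} Q'_k.$$
The displayed formula then gives $\mldeg(X_{W,n}) = \mldeg(X_{W',n-1}) = \alpha$. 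Hence $\alpha \in \mathcal{A}_n$, and $\alpha \le n(n+1) \le (n+1)(n+2)$ is consistent with the definition of the feasible set.

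I do not expect a real obstacle here; the one step needing care is the sign bookkeeping in the first paragraph. If one prefers not to rely on the cancellation of signs, there is a check via Corollary \ref{cor:euler_characteristic_via_intersections}. For each $I \subseteq [\bar{n}]$ containing $n$, the set $C_I$ equals $C_{I'}$, where $I'$ replaces $n$ by $0$ (or simply drops $n$ if $0 \in I$). The alternating sum then telescopes back to the one for $[\overline{n-1}]$. I would mention this only as a remark.
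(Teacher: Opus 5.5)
Your proposal is correct and is essentially the paper's argument: the paper takes $W$ with $W_{\bullet \bullet (n-1)} = W_{\bullet \bullet n}$ so that $Q_n = Q_{n-1}$, and you duplicate slice $0$ instead, which makes no difference; in both cases the union of quadric curves, and hence the ML degree, is unchanged. Your explicit check that the combined sign $(-1)^{n+1}(-1)^n = -1$ does not depend on $n$ spells out a step that the paper's one-line proof leaves implicit.
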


\begin{proof}
    Consider the set of all $W$ such that $W_{\bullet \bullet (n-1)} = W_{\bullet \bullet n}$.
    Then for any choice of $W$ in this set, $Q_n = Q_{n-1}$ and therefore

    $$ \chi\left(Y_{W,n}\right) = (-1)^n \chi\left(V(f_0f_1\cdots f_n) \cap (\CC^*)^2\right) = (-1)^n\chi\left(V(f_0f_1\cdots f_{n-1}) \cap (\CC^*)^2\right).$$
\end{proof}

%\begin{theorem}
%    For any $n$, $\mathcal{A}_n = [(n+2)(n+1)]$.
%\end{theorem}

\begin{proof}[Proof of Theorem \ref{thm:realizability}:]
 We use induction on $n$, where the base case of $n=1$ is covered by Theorem \ref{thm:P1 x P1 x P1}.
   By Lemma \ref{lem:inductive_euler_char}, it suffices to show that $\{(n+1)n+1,\dots,(n+2)(n+1)\} \subset \mathcal{A}_n $.
    For any $\alpha \in \{(n+1)n+1,\dots,(n+2)(n+1)\}$, we pick $S \subset \mathrm{altH}(n) \cup \{F_{\bullet \bullet n}\} $ so that $ |S| = (n+2)(n+1) - \alpha$ and a scaling $W \in (\CC^*)^{4n+4}$ which is a generic solution to the system given by $S$. Then by Lemma \ref{lem: hook_formula}, $\mldeg(X_{W,n}) = (n+2)(n+1) - ((n+2)(n+1)-\alpha) = \alpha$.  
\end{proof}

\subsubsection*{Acknowledgements}
This project originated at MPI MiS Leipzig during the SLMath Summer School: New perspectives on discriminants and their applications.  The authors thank the organizers and the hosts. The fourth author thanks Taylor Brysiewicz for many helpful conversations. The second and fourth authors are funded by the Deutsche Forschungsgemeinschaft (DFG, German Research Foundation) – 539867500 and 539847176 respectively as part of the research priority program Combinatorial Synergies.

%%%%%%%%%%%%%%%%%%%%%%%%%%%%%%%%%%%%%%%%%%%%

\bibliographystyle{alpha}
\bibliography{bibliography2}

\vspace{0.5cm}

\noindent{\bf Authors' addresses:}
\smallskip
\small 

\noindent Serkan Hoşten,
San Francisco State University
\hfill {\tt serkan@sfsu.edu}

\noindent Vadym Kurylenko,
Otto-von-Guericke-Universität Magdeburg
\hfill {\tt vadym.kurylenko@ovgu.de}

\noindent Elke Neuhaus, MPI for Mathematics in the Sciences Leipzig 
\hfill {\tt elke.neuhaus@mis.mpg.de}

\noindent Nikolas Rieke,
Technische Universit\"at Braunschweig
\hfill {\tt nikolas.rieke@tu-braunschweig.de}

\end{document}